\newcommand{\semph}[1]{\emph{\textbf{#1}}} 
\newcommand{\lemph}[1]{\emph{#1}}
\theoremstyle{definition}
\newtheorem{theorem}{Theorem}
\newtheorem{lemma}{Lemma}
\newtheorem{prop}{Proposition}
\newtheorem{remark}{Remark}
\newtheorem{corollary}{Corollary}
\newcommand{\comp}{\mathop{;}}
\newcommand{\Ccone}{\mathsf{C}}
\newcommand{\Q}{\mathit{Q}}
\newcommand{\vektoro}[1]{\mathsf{\bar#1}}
\newcommand{\nombro}[1]{\mathsf{#1}}
\newcommand{\na}{\nombro{a}}
\newcommand{\nb}{\nombro{b}}
\newcommand{\np}{\nombro{p}}
\newcommand{\nq}{\nombro{q}}
\newcommand{\nr}{\nombro{r}}
\newcommand{\nt}{\nombro{t}}
\newcommand{\nx}{\nombro{x}}
\newcommand{\ny}{\nombro{y}}
\newcommand{\nv}{\nombro{v}}
\newcommand{\vo}{\vektoro{o}}
\newcommand{\vp}{\vektoro{p}}
\newcommand{\vq}{\vektoro{q}}
\newcommand{\vr}{\vektoro{r}}
\newcommand{\vs}{\vektoro{s}}
\newcommand{\vx}{\vektoro{x}}
\newcommand{\vy}{\vektoro{y}}
\newcommand{\vz}{\vektoro{z}}
\newcommand{\vv}{\vektoro{v}}
\newcommand{\vu}{\vektoro{u}}
\newcommand{\etalambda}{{\scriptstyle \lambda}}
\newcommand{\etetalambda}{{\scriptscriptstyle \lambda}}
\newcommand{\tinyneq}{{\scriptscriptstyle\neq}}
\newcommand{\etacup}{{\scriptstyle\cup}}
\newcommand{\aleq}{\mathrel{\rho}}
\newcommand{\alneq}{\mathrel{\bar \rho}}
\newcommand{\bleq}{\mathrel{\delta}}
\newcommand{\tleq}{\mathrel{\tau}}
\newcommand{\lleq}{\mathrel{\etalambda}}
\newcommand{\sleq}{\mathrel{\sigma}}
\newcommand{\tlneq}{\mathrel{\bar \tau}}
\newcommand{\llneq}{\mathrel{\bar \etalambda}}
\newcommand{\slneq}{\mathrel{\bar \sigma}}
\newcommand{\defeq}{\stackrel{\textsf{\tiny def}}{=}}
\newcommand{\defiff}{\stackrel{\textsf{\tiny def}}{\iff}}
\newcommand{\ie}{i.e., }
\newcommand{\eg}{e.g., }
\newcommand{\cf}{cf.\ }
\newcommand{\etc}{etc.\ }
\newcommand{\converse}{{}^{-1}} 
\newcommand{\Id}{=}
\newcommand{\komplementu}[1]{\bar{#1}}
\newcommand{\komplemento}{\komplementu{\phantom{n}}}
\newcommand{\ttos}{\Psi_{\tau\to\sigma}}
\newcommand{\ttol}{\Psi_{\tau\to \etetalambda}}
\newcommand{\ltot}{\Psi_{\etetalambda\to \tau}}
\newcommand{\ltos}{\Psi_{\etetalambda\to \sigma}}
\newcommand{\stot}{\Psi_{\sigma\to \tau}}
\newcommand{\stol}{\Psi_{\sigma\to \etetalambda}}
\newcommand{\newettos}{\hat\mathcal{E}_{\tau\to \sigma}}
\newcommand{\newestot}{\hat\mathcal{E}_{\sigma\to \tau}}
\newcommand{\ettos}{\mathcal{E}_{\tau\to \sigma}}
\newcommand{\estot}{\mathcal{E}_{\sigma\to \tau}}
\newcommand{\wstot}{\mathcal{W}_{\sigma\to \tau}}
\newcommand{\wstol}{\mathcal{W}_{\sigma\to \etetalambda}}
\newcommand{\wstots}{\mathcal{W}_{\sigma\to \tau}^{\scriptscriptstyle [\tleq/\sleq]}}
\newcommand{\wstols}{\mathcal{W}_{\sigma\to \etetalambda}^{\scriptscriptstyle[\tleq/\sleq]}}
\newcommand{\uttol}{\mathcal{U}_{\tau\to \etetalambda}}
\newcommand{\ustol}{\mathcal{U}_{\sigma\to \etetalambda}}
\newcommand{\newuttol}{\hat\mathcal{U}_{\tau\to \etetalambda}}
\newcommand{\newustol}{\hat\mathcal{U}_{\sigma\to \etetalambda}}
\newcommand{\kaj}{\mathop{,\,}}
\newcommand{\aux}{\lor}
\colorlet{verda}{green!42}
\newcommand{\open}{\cellcolor{yellow!42}\textbf{?}}
\newcommand{\done}{\cellcolor{verda}\checkmark}
\newcommand{\txplane}{$\nt\nx$-plane\xspace}
\title[Interdefinability of some relations of spacetime]{Complexity in
  the interdefinability of timelike, lightlike and spacelike
  relatedness of Minkowski spacetime}
\thanks{This research is supported by the Hungarian National Research,
  Development and Innovation Office (NKFIH), grants no.\ FK-134732 and
  TKP2021-NVA-16.}
\author[{H.\ Andr\'eka, J.\ X.\ Madar\'asz, I.\ N\'emeti,
    G.\ Sz\'ekely}]{Hajnal Andr\'eka, Judit X.\ Madar\'asz, Istv\'an
  N\'emeti, Gergely Sz\'ekely}
\address{Alfr\'ed R\'enyi Institute of Mathematics,\\
  H-1053 Budapest, Re\'altanoda st. 13-15, Budapest, Hungary}
\begin{document}

\begin{abstract}
  Interdefinability of timelike, lightlike and spacelike relatedness
  of Minkowski spacetime is investigated in detail in the paper, with
  the aim of finding the simplest definitions. Based on ideas scattered
  in the literature, definitions are given between any two of these
  relations that use only 4 variables. All these definitions work over
  arbitrary Euclidean fields in place of the field of reals, if the
  dimension n of spacetime is greater than two. If n=2, the
  definitions work over arbitrary ordered fields except the ones based
  on lightlike relatedness (where no definition can work by
  symmetry). None of these relations can be defined from another one
  using only 3 variables. Our four-variable definitions use only one
  universal and one existential quantifiers in a specific order. In
  some of the cases, we show that the order of these quantifiers can
  be reversed for the price of using twice as many quantifiers.
  Except two cases, we provide existential/universal definitions using
  5 variables or show that no existential/universal definition
  exists. There are no existential/universal definitions between any
  two of these relations using only 4 variables.  It remains open
  whether there is an existential (universal) definition of timelike
  (lightlike) relatedness from spacelike relatedness if
  $n>2$. Finally, several other open problems related to the quantifier
  complexity of the simplest possible definitions are given.
\end{abstract}

\maketitle

\section{Introduction}

There is an extensive literature on the axiomatization of Minkowski
spacetime in terms of various basic concepts. Robb \cite{Robb14},
revised as \cite{Robb36}, gives an axiomatization of Minkowski
spacetime using only one binary ``after'' relation as primitive
notion. Goldblatt~\cite[Appendix B]{Goldblatt} shows that ``between''
and ``orthogonality'', the primitives used therein, can be defined in
terms of Robb's ``after'' relation. Latzer \cite{Latzer72} shows that
Minkowski spacetime can be axiomatized using the relation of lightlike
relatedness alone.  Both in terms of the binary asymmetric lightlike
after relation and in terms of the binary symmetric lightlike
relatedness relation, Mundy~\cite{Mundy} axiomatizes Minkowski
spacetime significantly simplifying Robb's axiom
system. Pambuccian~\cite{Pambuccian} explicitly defines collinearity
and equidistance from lightlike relatedness in metric-affine Fano
spaces in order to present Alexandrov--Zeeman type theorems as
definability results. For a contemporary axiomatization of Minkowski
spacetime pursued in the style of Tarski, see \cite{CB20}.

The three most fundamental symmetric binary relations in Minkowski
spacetime are timelike, lightlike and spacelike relatedness.
Scattered in the extensive literature mentioned above, there can be
found some definitions and some claims about certain interdefinability
among these relations. For example, Malament \cite[p.294]{Malament}
claims that the three basic relations are first-order definable in
terms of one another, but he gives no proof or reference except for
one specific formula in footnote 3. Interdefinability results
concerning Minkowski spacetime are used in general relativity, see
e.g., \cite{KP67}, \cite[especially p.180]{HKM76} and \cite{M77}. The
results in the present paper are used in the follow-up paper
\cite{MinkCA2}.

Herein we carefully investigate the interdefinability of timelike,
lightlike and spacelike relatedness relations aiming to find the
simplest definitions.  For each pair of these relations, we present
definitions that are simplest in terms of number of variables. We also
investigate how definability depends on the dimension of spacetime and
on the underlying ordered field. For example, lightlike relatedness is
definable form timelike relatedness even if the dimension $n$ of
spacetime is 2 (\ie not just time but space is also one dimensional)
but $n$ has to be at least 3 to be able to define timelike relatedness
from lightlike relatedness. In most of the cases, we assume that
either $n=2$ and the underlying field is an arbitrary ordered field or
$n>2$ and the underlying field is a Euclidean field (\ie an ordered
field in which every positive number has a square root).

In Section~\ref{sec-interdef}, we show that the relations of timelike,
lightlike and spacelike relatedness can be defined from any of them
using only four variables, see Theorems~\ref{thm-ttosl},
\ref{thm-stol} and \ref{thm-ltost}. Then we show that the number of
variables used in these definitions are minimal, \ie none of these
relations can be defined from another one using only three variables,
see Theorem~\ref{thm-nondef}. In Section~\ref{sec-complexity}, using a
well-visualizable construction, we show that spacelike relatedness can
be defined existentially from timelike relatedness using six
variables, see Theorem~\ref{thm-edef2}, but no such definitions exists
using only four variables \cf Theorem~\ref{thm-noedef2}. In
preparation to the proof of Theorem~\ref{thm-noedef2}, we develop a
picturesque graph-embedding-based method to understand relations
definable by existential formulas. Then using this method, we give a
five variable existential formula defining spacelike relatedness from
timelike relatedness, see Theorem~\ref{thm-newedef2}.  Over arbitrary
ordered fields, we show that lightlike relatedness cannot be defined
existentially neither from timelike nor from spacelike relatedness,
see Theorem~\ref{thm-noedef}.  Over arbitrary ordered fields, we show
that neither timelike nor spacelike relatedness can be defined
existentially or universally from lightlike relatedness, see
Theorem~\ref{thm-noedefll}. Finally in Section~\ref{sec-open}, we give
several open problems related to the quantifier complexity of the
simplest possible definitions.

\section{Some notations and definitions}

Here, we collect the most fundamental notations used in this paper. We
work over arbitrary ordered fields. So herein, we assume that
$(\Q,+,\cdot,\le)$ is an ordered field.\footnote{That
  $(\Q,+,\cdot,\le)$ is an ordered field means that $(\Q,+,\cdot)$ is
  a field which is totally ordered by $\le$, and we have the following
  two properties for all $x,y,z\in Q$: (1) $x+z\le y+z$ if $x\le y$,
  and (2) $0\le xy$ if $0\le x$ and $0\le y$.} Let $\Q^n$ denote the
set of $n$-dimensional vectors over $\Q$. We also use the vector space
structure on $\Q^n$. Following the convention common in relativity
theory, we start counting axes from $0$, draw the $0$th coordinate
axis vertically and consider it as the time-axis.

In our formulas, we use the following symbols for logical connectives:
both ``$\land$'' and ``$\kaj$'' interchangeably for conjunction,
``$\aux$'' for disjunction, ``$\lnot$'' for negation, ``$\exists$''
for existential quantifier, and ``$\forall$'' for universal
quantifier.

Let $\vp=(\np_0,\np_1,\ldots,\np_{n-1})\in\Q^n$ and
$\vq=(\nq_0,\nq_1,\ldots,\nq_{n-1})\in\Q^n$.  We are going to work
with the following relations illustrated by Figure~\ref{fig:tls}:
\begin{align*}
  \vp\tleq\vq  &\defiff (\np_0-\nq_0)^2> (\np_1-\nq_1)^2+\ldots +(\np_{n-1}-\nq_{n-1})^2\\
  \vp\lleq\vq  &\defiff (\np_0-\nq_0)^2 = (\np_1-\nq_1)^2+\ldots +(\np_{n-1}-\nq_{n-1})^2\kaj \np_0\neq \nq_0\\
  \vp\sleq\vq &\defiff (\np_0-\nq_0)^2< (\np_1-\nq_1)^2+\ldots +(\np_{n-1}-\nq_{n-1})^2.
\end{align*}
We say that \lemph{$\vp$ and
  $\vq$ are}, \semph{timelike}, \semph{lightlike},\footnote{Usually,
  it is allowed for lightlike related points to be equal, see \eg
  \cite{Malament}. Here we forbid equality by assuming $\np_0\neq
  \nq_0$. From the point of view of definability, this is a negligible
  difference since the two versions are clearly definable from each
  other. We prefer to use the strict one because that is minimal among
  the definable nonempty binary concepts.} \semph{spacelike related}
in the respective cases. A line is called timelike (lightlike,
spacelike) if{}f all of its distinct points are timelike (lightlike,
spacelike) related. By that $\vp$ is in the \semph{timelike
  future}\emph{ of} $\vq$, we simply mean that $\vp\tleq\vq$ and
$\np_0>\nq_0$. Analogously, $\vp$ is in the \semph{timelike
  past}\emph{ of} $\vq$ if{}f $\vp\tleq\vq$ and $\np_0<\nq_0$.

\begin{figure}
  \begin{center}
    \begin{tikzpicture}[]

\pgfmathsetmacro{\conesize}{1}

\newcommand{\lcone}[4]{
\draw[red]   (#1-2*\conesize,#2+2*\conesize) to (#1,#2) to (#1+2*\conesize,#2+2*\conesize);
\fill[#4, opacity=.1] (#1,#2) to (#1-2*\conesize,#2+2*\conesize) to (#1+2*\conesize,#2+2*\conesize) to cycle ;

\draw[red, fill=#4, fill opacity=0.1] (#1+2*\conesize,#2+2*\conesize) arc [start angle=0,end angle=180,x radius=2*\conesize, y radius=.1*2*\conesize];
\draw[red] (#1-2*\conesize,#2+2*\conesize) arc [start angle=180,end angle=360,x radius=2*\conesize, y radius=.1*2*\conesize];

\draw[red]   (#1-\conesize,#2-\conesize) to (#1,#2) to (#1+\conesize,#2-\conesize);

\fill[#4, opacity=.1] (#1,#2) to (#1-\conesize,#2-\conesize) to (#1+\conesize,#2-\conesize) to cycle ;

\draw[dashed, red] (#1+\conesize,#2-\conesize) arc [start angle=0,end angle=180,x radius=\conesize, y radius=.1*\conesize];
\draw[red, fill=#4, fill opacity=0.1] (#1-\conesize,#2-\conesize) arc [start angle=180,end angle=360,x radius=\conesize, y radius=.1*\conesize];

\draw[fill] (#1,#2)  circle [radius=0.05] node[left,yshift=-1]{#3};

}

\lcone{0}{0}{$\vp$}{blue}

\coordinate (t) at (0.3,1.5);
\coordinate (l) at (1.2,1.2);
\coordinate (s) at (1.5,0.3);

\draw[ultra thick,blue] (0,0) to node[left,black] {$\tleq$} (t);
\draw[ultra thick,red] (0,0) to node[right,black] {$\lleq$} (l);
\draw[ultra thick,green!84!black] (0,0) to node[below,black] {$\sleq$} (s);

\node[right] at (2,2) {$\Lambda_{\vp}$};
\draw[fill] (t)  circle [radius=0.05] node[below right]{$\vq$};
\draw[fill] (l)  circle [radius=0.05] node[below right]{$\vq'$};
\draw[fill] (s)  circle [radius=0.05] node[below right]{$\vq''$};
\draw[fill] (0,0)  circle [radius=0.05];

\end{tikzpicture}
    \caption{Illustration for relations $\vp\tleq \vq$,
      $\vp\lleq\vq'$, and $\vp\sleq\vq''$, as well as for light cone
      $\Lambda_{\vp}$ \label{fig:tls}}
  \end{center}
\end{figure}

We denote the complement of these relations by $\tlneq$, $\llneq$,
$\slneq$. By $\aleq_{\tinyneq}$, we are going to abbreviate the
  intersection of binary relations $\aleq $ and $\neq$.  In our
formulas, we use the same symbols $\tleq$, $\lleq$ and $\sleq$ in
infix notation also to denote the corresponding relation symbols. We
abbreviate negated atomic formulas $\lnot (x = y)$ to $x\neq y$,
$\lnot (x \tleq y)$ to $x\tlneq y$, \etc to make them easier to
read. In case $\varphi(x,y)$ is a formula having all its free
variables among $x$ and $y$, we will write $\varphi(\vx,\vy)$ to
denote that the binary relation defined by $\varphi$ holds for
$\vx,\vy\in\Q^n$.

By the \semph{light cone} $\Lambda_{\vp}$ \lemph{through point $\vp$},
we understand the set of points which are lightlike related or equal to
$\vp$, \ie
\begin{equation*}
  \Lambda_{\vp}\defeq\big\{\vq\in\Q^n:\vq\lleq\vp \text{ or }
  \vp=\vq\big\}.
\end{equation*}

By the \semph{causal cone} $\Ccone_{\vp}$ \lemph{through point $\vp$},
we understand the set of points which are equal, timelike or lightlike
(aka.\ causally) related to $\vp$, \ie
\begin{equation*}
  \Ccone_{\vp}\defeq\big\{\vq\in\Q^n: \vq\slneq\vp\big\},
\end{equation*}
and $\vp$ is in the \semph{causal future
    (past)}\emph{ of} $\vq$ if{}f $\vp\slneq\vq$ and $\np_0\ge\nq_0$
  ($\np_0\le\nq_0$).

\begin{remark}[$n\ge2$]\label{rem-aut}
  The following is known and also straightforward to check. For all
  natural numbers $n\ge2$, the followings are all automorphisms of
  model $(\Q^n,\tleq,\lleq,\sleq)$: any uniform scaling, reversing
  time,\footnote{That is, the transformation $(\np_0,\np_1\ldots,
    \np_{n-1})\mapsto (-\np_0,\np_1\ldots, \np_{n-1})$.} any
  translation, any spatial rotation,\footnote{That is, any
    distance-preserving linear transformation of determinant 1 that
    fixes the time-axis, \ie maps vector $(1,0,\ldots0)$ to itself.}
  any Lorentz boost\footnote{A Lorentz boost in the $i$-th spatial
    coordinate direction is a transformation of the form
    \begin{equation*}
      B_i:(\np_0,\np_1,\ldots, \np_{n-1})\mapsto
      \left(\frac{\np_0-\nv\np_i}{\sqrt{1-\nv^2}},\np_1,\ldots,\np_{i-1},\frac{\np_i-\nv\np_0}{\sqrt{1-\nv^2}},\np_{i+1},\ldots,\np_{n-1}\right)
    \end{equation*}
    for some velocity $\nv\in \Q$ for which $-1<\nv<1$ and
    $\sqrt{1-\nv^2}\in\Q$.}  in any spatial direction.
\end{remark}

An ordered field $(\Q,+,\cdot,\le)$ is called a \semph{Euclidean
  field} if its every positive element has a square root, \ie if{}f
the following holds in it
\begin{equation}\label{eq-Eucl}\tag{Eucl.}
  \text{for all } x\in\Q, \text{ if } 0\le x,\text{ there is a }y\in\Q
  \text{ such that } x=y^2.
\end{equation}
In figures and tables, we are going to refer to this property as
\eqref{eq-Eucl}.
In some proofs, we will refer to plane $\{(\nt,\nx,0,\ldots,0):
\nt,\nx\in\Q\}$ as \semph{\txplane}.

\begin{prop}[$n\ge2$; \ref{eq-Eucl} or $n=2$]\label{prop-aut}
  If $n=2$ or $(\Q,+,\cdot,\le)$ is a Euclidean field, then the
  automorphism group of $(\Q^n,\tleq,\lleq,\sleq)$ acts transitively
  on the timelike, lightlike and spacelike related pairs of points,
  \ie for all $\aleq\;\in\{\tleq, \lleq, \sleq\}$ and for all
  $\vp,\vq,\vp',\vq'\in \Q^n$ for which both $\vp\aleq\vq$ and
  $\vp'\aleq\vq'$ holds, there is an automorphism $\alpha$ of
  $(\Q^n,\tleq,\lleq,\sleq)$ for which $\alpha(\vp)=\vp'$ and
  $\alpha(\vq)=\vq'$.
\end{prop}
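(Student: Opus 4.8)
The plan is to exploit that the automorphisms of $(\Q^n,\tleq,\lleq,\sleq)$ form a group: it therefore suffices, for each relation $\aleq\in\{\tleq,\lleq,\sleq\}$, to fix one \emph{canonical} $\aleq$-related pair and to send every $\aleq$-related pair to it by an automorphism, since composing one such map with the inverse of another connects any two $\aleq$-related pairs. As canonical pairs I would take $(\vo,(1,0,\ldots,0))$ for $\tleq$, $(\vo,(0,1,0,\ldots,0))$ for $\sleq$, and $(\vo,(1,1,0,\ldots,0))$ for $\lleq$, where $\vo$ is the origin. Applying a translation first, I may assume the first point of a given pair is $\vo$, so the task reduces to moving an arbitrary timelike (lightlike, spacelike) \emph{vector} onto the corresponding canonical one, up to a final positive scaling; all maps below are linear and hence fix $\vo$.

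For $n>2$ (so the spatial dimension $n-1$ is at least two) over a Euclidean field, I would use the geometric automorphisms of Remark~\ref{rem-aut}. Given a vector $\vq=(\nq_0,\ldots,\nq_{n-1})$, first use time reversal to arrange $\nq_0\ge 0$, then compose rotations in spatial coordinate $2$-planes to rotate the spatial part onto the positive first spatial axis, reaching $(\nq_0,\nr,0,\ldots,0)$ with $\nr=\sqrt{\nq_1^2+\cdots+\nq_{n-1}^2}$; here \eqref{eq-Eucl} guarantees each planar rotation is defined over $\Q$, since its entries are built from expressions of the form $\sqrt{\na^2+\nb^2}$ with nonnegative radicand. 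In the timelike case $\nq_0>\nr\ge 0$, a Lorentz boost $B_1$ with velocity $\nv=\nr/\nq_0$ is admissible because $\sqrt{1-\nv^2}=\sqrt{\nq_0^2-\nr^2}/\nq_0\in\Q$ by \eqref{eq-Eucl}, and it sends the vector to $(\sqrt{\nq_0^2-\nr^2},0,\ldots,0)$, whence a positive scaling yields $(1,0,\ldots,0)$. The spacelike case is symmetric, boosting with $\nv=\nq_0/\nr$ to kill the time component and landing on $(0,1,0,\ldots,0)$ after scaling; in the lightlike case $\nr=\nq_0$, so no boost is needed and scaling alone gives $(1,1,0,\ldots,0)$. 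The only thing to check throughout is that every radicand is a nonnegative element of $\Q$, which is exactly what \eqref{eq-Eucl} provides.

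The genuinely different case is $n=2$ over an arbitrary ordered field, and I expect it to be the main obstacle: here the isometric maps of Remark~\ref{rem-aut} do \emph{not} suffice, because rotations, boosts and scalings change the Minkowski square $\nq_0^2-\nq_1^2$ of a vector only by square factors, so over a non-Euclidean field (for instance over the rationals in place of $\R$) this quantity is an invariant modulo squares and the geometric group fails to be transitive on timelike pairs. To get around this I would pass to null coordinates $\nombro{u}=\nt+\nx$ and $\nombro{w}=\nt-\nx$ on the \txplane, which is all of $\Q^2$. A direct computation shows that, for the difference of two points, $\vp\tleq\vq$, $\vp\lleq\vq$, $\vp\sleq\vq$ hold exactly when the product of the two null-coordinate differences is positive, exactly one of them vanishes, and the product is negative, respectively. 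Hence the three relations depend only on the signs and the vanishing of the null-coordinate differences, so every map that rescales the two null coordinates independently by positive factors, $\nombro{u}\mapsto\alpha\,\nombro{u}$ and $\nombro{w}\mapsto\beta\,\nombro{w}$ with $\alpha,\beta\in\Q$ and $\alpha,\beta>0$, is an automorphism --- this is the non-isometric ingredient unavailable from the list in Remark~\ref{rem-aut} unless $\alpha\beta$ happens to be a square. Combined with the reflections $\nt\mapsto-\nt$ and $\nx\mapsto-\nx$ to normalize signs and to move a vanishing null coordinate into a fixed slot, these rescalings send the null coordinates of any timelike (respectively spacelike, lightlike) vector to $(1,1)$ (respectively $(1,-1)$, $(1,0)$), hence the vector onto the canonical one over \emph{any} ordered field. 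This handles both regimes and completes the argument.
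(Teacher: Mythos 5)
Your proof is correct, and for $n>2$ over a Euclidean field it is essentially the paper's argument: reduce to canonical pairs via translations, time reversal, spatial rotations (whose matrix entries exist by \eqref{eq-Eucl} because each radicand is a sum of squares), and then a Lorentz boost followed by a positive uniform scaling. Where you genuinely diverge is the $n=2$ case over an arbitrary ordered field. The paper keeps the same composite map $B_{\tau}$ (boost composed with scaling) and simply observes that in two dimensions it collapses to a rational function of $\nt$ and $\nx$, hence is defined over any ordered field; you instead pass to null coordinates and enlarge the available automorphisms by independent positive rescalings of the two null directions. These are in fact the same linear maps --- the paper's $B_{\tau}$ is exactly the null rescaling by $1/(\nt+\nx)$ and $1/(\nt-\nx)$ --- but your presentation makes explicit something the paper leaves implicit: the maps of Remark~\ref{rem-aut} that are individually defined over a non-Euclidean field multiply the Minkowski square only by square factors, so its square class is an invariant of the subgroup they generate and an extra ingredient is genuinely required. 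That is a worthwhile clarification, since the paper's phrase ``composition of a Lorentz boost and a uniform scaling'' is slightly loose for $n=2$: over a non-Euclidean field neither factor is separately an automorphism of $(\Q^2,\tleq,\lleq,\sleq)$, only their product is.
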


\begin{proof}
  Because the inverses and the compositions of automorphisms are
  automorphisms, it is enough to show that for every two distinct
  points $\vp,\vq\in\Q^n$ there is an automorphism $\alpha$ of
  $(\Q^n,\tleq,\lleq,\sleq)$ such that $\alpha$ maps $\vp$ to
  $(0,0,\ldots,0)$ and $\alpha(\vq)=(1,0,\ldots,0)$ or
  $\alpha(\vq)=(1,1,0,\ldots,0)$ or $\alpha(\vq)=(0,1,0,\ldots,0)$.

  Since translations are automorphisms of $(\Q^n,\tleq,\lleq,\sleq)$,
  \cf Remark~\ref{rem-aut}, there is an automorphism which maps $\vp$
  to $(0,0,\ldots,0)$. So because the composition of two automorphisms
  is also an automorphism, we can assume that $\vp=(0,0,\ldots,0)$
  without loosing generality. Similarly, because reversing time is
  also an automorphism, \cf Remark~\ref{rem-aut}, we can also assume
  that $\nq_0\ge0$.

  Assume now that $(\Q,+,\cdot,\le)$ is a Euclidean field. Then there
  is a spatial rotation $R$ which does not change $\vp=(0,0,\ldots,0)$
  and maps $\vq$ to $(\nt,\nx,0,\ldots, 0)$ for some non-negative
  $\nt,\nx\in\Q$. This rotation $R$ is an automorphism, \cf
  Remark~\ref{rem-aut}. Hence we can assume that $\vp=(0,0,\ldots,0)$
  and $\vq=(\nt,\nx,0,\ldots, 0)$ for some $\nt\ge0$, $\nx\ge0$ for
  which $(\nt,\nx)\neq(0,0)$.

  If $\nt>\nx\ge0$, then $\vp\tleq \vq$. Let us consider the following
  map:
  \begin{equation*}
    B_{\tau}:(\nr_0,\nr_1,\nr_2,\ldots, \nr_{n-1}) \mapsto
    \frac{1}{\sqrt{\nt^2-\nx^2}}\left(\frac{\nt\nr_0-\nx\nr_1}{\sqrt{\nt^2-\nx^2}},\frac{\nt\nr_1-\nx\nr_0}{\sqrt{\nt^2-\nx^2}},\nr_2,\ldots,\nr_{n-1}\right).
  \end{equation*}
  This map $B_{\tau}$ is an automorphism because it is the composition
  of a Lorentz boost corresponding to speed $\nv=\nx/\nt$ and a
  uniform scaling by the factor $1/\sqrt{\nt^2-\nx^2}$. It is
  straightforward to verify that $B_{\tau}$ maps $\vp$ to
  $(0,0,\ldots,0)$ and $\vq$ to $(1,0,\ldots,0)$.  If $\nt=\nx>0$,
  then $\vp\lleq \vq$ and there is a uniform scaling that maps $\vp$
  to $(0,0,\ldots,0)$ and $\vq$ to $(1,1,0,\ldots,0)$. If $0<\nt<\nx$,
  then $\vp\sleq\vq$ and let
  \begin{equation*}
    B_{\sigma}:(\nr_0,\nr_1,\nr_2,\ldots, \nr_{n-1}) \mapsto
    \frac{1}{\sqrt{\nx^2-\nt^2}}\left(\frac{\nx\nr_0-\nt\nr_1}{\sqrt{\nx^2-\nt^2}},\frac{\nx\nr_1-\nt\nr_0}{\sqrt{\nx^2-\nt^2}},\nr_2,\ldots,\nr_{n-1}\right).
  \end{equation*}
  Analogously, this $B_{\sigma}$ is an automorphism and maps $\vp$ to
  $(0,0,\ldots,0)$ and $\vq$ to $(0,1,0,\ldots,0)$. If $\nt=0<\nx$, then
  $\vp\sleq \vq$ and there is a uniform scaling which maps $\vp$ and
  $\vq$ to $(0,0,\ldots,0)$ and $(0,1,0,\ldots,0)$, respectively. This
  completes the proof in the case when $(\Q,+,\cdot,\le)$ is a
  Euclidean field.

  There were only two steps where we used the Euclidean
    property: the one where we used the existence of an appropriate
    rotation $R$ to rotate points $\vp$ and $\vq$ to the \txplane, and
    the one where we used the existence of maps $B_{\tleq}$ and
    $B_{\sleq}$. Since when $n=2$, the rotation step is not needed and
    $B_{\tleq}$ and $B_{\sleq}$ simplify into rational functions in
    both coordinates, we also have the statement over arbitrary
  ordered fields if $n=2$.
\end{proof}

Because any relation definable in a model has to be closed under the
automorphisms of the model, the following is a corollary of
Proposition~\ref{prop-aut}.

\begin{corollary}[$n\ge2$; \ref{eq-Eucl} or $n=2$]\label{cor-aut}
  Let $\aleq$ and $\bleq$ be any two different relations from the set
  $\{\tleq, \lleq, \sleq\}$.  For any relation $R$ definable in
  $(\Q^n,\aleq)$, if $n=2$ or $(\Q,+,\cdot,\le)$ is a Euclidean
  field, we have that
  \begin{equation*}
    \bleq\;\subseteq R \iff(p,q)\in R \text{ for some }p,q\in\Q^n
    \text{ such that }p\bleq q \iff R\cap \bleq\neq\emptyset.
  \end{equation*}
  \hfill \qed
 \end{corollary}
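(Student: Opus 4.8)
The plan is to derive all three equivalences from a single application of the transitive action established in Proposition~\ref{prop-aut}, combined with the general principle that definable relations are preserved by automorphisms. First I would record that every automorphism of $(\Q^n,\tleq,\lleq,\sleq)$ is in particular an automorphism of the reduct $(\Q^n,\aleq)$: it preserves each of the three relations, hence it preserves the single relation $\aleq$. Since $R$ is definable in $(\Q^n,\aleq)$, it is invariant under all automorphisms of this structure, and therefore under every automorphism of the richer structure $(\Q^n,\tleq,\lleq,\sleq)$ as well.

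Next I would dispatch the two easy links in the chain. The middle statement, that $(p,q)\in R$ for some $p,q\in\Q^n$ with $p\bleq q$, is by definition just a restatement of $R\cap\bleq\;\neq\emptyset$, so these two are trivially equivalent. Moreover, for $n\ge2$ the relation $\bleq$ is nonempty (one exhibits a single $\bleq$-related pair in the \txplane), so $\bleq\;\subseteq R$ immediately produces a pair $(p,q)\in R$ with $p\bleq q$; this gives the implication from the first statement to the second.

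The only implication carrying content is that $R\cap\bleq\;\neq\emptyset$ forces $\bleq\;\subseteq R$, and this is exactly where Proposition~\ref{prop-aut} is used. Suppose $(p,q)\in R$ with $p\bleq q$, and let $(p',q')$ be an arbitrary pair with $p'\bleq q'$. Under the hypothesis that $n=2$ or that $(\Q,+,\cdot,\le)$ is Euclidean, Proposition~\ref{prop-aut} supplies an automorphism $\alpha$ of $(\Q^n,\tleq,\lleq,\sleq)$ with $\alpha(p)=p'$ and $\alpha(q)=q'$. By the first step $R$ is invariant under $\alpha$, so $(p,q)\in R$ yields $(p',q')\in R$. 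As $(p',q')$ was an arbitrary $\bleq$-related pair, we conclude $\bleq\;\subseteq R$, closing the cycle.

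I do not expect any genuine obstacle here: the substantive work, namely the transitive action of the automorphism group on each kind of related pair, is already carried out in Proposition~\ref{prop-aut}, and the corollary is a short deduction from it. The only two points deserving a moment's care are the observation that invariance under the automorphisms of the full structure follows from definability in the reduct, and the equally harmless remark that $\bleq$ is nonempty for $n\ge2$, so that the leftmost implication is not vacuous.
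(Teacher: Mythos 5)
Your proposal is correct and follows exactly the route the paper intends: the paper derives this corollary immediately from Proposition~\ref{prop-aut} together with the observation that any relation definable in a model is closed under its automorphisms, which is precisely your argument (including the reduction from automorphisms of $(\Q^n,\tleq,\lleq,\sleq)$ to automorphisms of the reduct $(\Q^n,\aleq)$). The extra details you supply — the triviality of the second equivalence and the nonemptiness of $\bleq$ — are harmless elaborations of what the paper leaves implicit.
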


Because the union of relations $\tleq$, $\sleq$, $\lleq$ and $=$ is
the universal relation $\Q^n\times\Q^n$, Corollary~\ref{cor-aut}
implies the following.

\begin{corollary}[$n\ge2$; \ref{eq-Eucl} or $n=2$]\label{cor-atoms}
  Let $\aleq$ be any of relations $\tleq$, $\lleq$ and $\sleq$. If
  $n=2$ or $(\Q,+,\cdot,\le)$ is a Euclidean field, then any nonempty
  binary relation which is definable in $(\Q^n,\aleq)$ is the union of
  some of relations $\tleq$, $\sleq$, $\lleq$ and $=$.\hfill \qed
\end{corollary}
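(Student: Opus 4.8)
The plan is to exploit the fact recorded just before Corollary~\ref{cor-aut} — that every relation definable in a model is closed under the automorphisms of that model — together with the transitivity furnished by Proposition~\ref{prop-aut}. The central observation is that the four relations $\tleq$, $\lleq$, $\sleq$ and $=$ partition $\Q^n\times\Q^n$: for any ordered pair $(\vp,\vq)$ either $\vp=\vq$, or else $\vp\neq\vq$ and then exactly one of $\vp\tleq\vq$, $\vp\lleq\vq$, $\vp\sleq\vq$ holds, according to whether $(\np_0-\nq_0)^2$ is greater than, equal to, or less than $(\np_1-\nq_1)^2+\ldots+(\np_{n-1}-\nq_{n-1})^2$ (note that when $\vp\neq\vq$, the middle case forces $\np_0\neq\nq_0$, so it is genuinely lightlike rather than equality). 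Hence it suffices to show that each of these four blocks is a single orbit of the automorphism group of $(\Q^n,\tleq,\lleq,\sleq)$ acting on pairs of points; a relation closed under automorphisms is then automatically a union of some of these orbits, and nonemptiness guarantees it is the union of at least one.

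First I would invoke Proposition~\ref{prop-aut}, whose hypothesis ($n=2$ or $(\Q,+,\cdot,\le)$ Euclidean) is exactly what we are assuming: it says the automorphism group acts transitively on each of the sets of timelike, lightlike and spacelike related pairs, so each of $\tleq$, $\lleq$ and $\sleq$ is a single orbit. For the diagonal I would use instead that every translation is an automorphism (Remark~\ref{rem-aut}, which needs no field assumption): given points $\vp,\vp'$, the translation carrying $\vp$ to $\vp'$ carries $(\vp,\vp)$ to $(\vp',\vp')$, so $=$ is a single orbit as well. Consequently, if $R$ is definable in $(\Q^n,\aleq)$, and therefore closed under all automorphisms of $(\Q^n,\tleq,\lleq,\sleq)$, then for each block $S\in\{\tleq,\lleq,\sleq,=\}$ we have either $S\subseteq R$ or $S\cap R=\emptyset$, simply because $R$ either meets the orbit $S$ or it does not. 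Taking the union of those blocks that $R$ meets recovers $R$ exactly, and nonemptiness of $R$ forces this union to be nonempty.

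Two of the four cases are already packaged as Corollary~\ref{cor-aut}, namely the two relations $\bleq$ distinct from $\aleq$, for which the stated equivalence $\bleq\subseteq R\iff R\cap\bleq\neq\emptyset$ is precisely the dichotomy above. The only point not literally covered by Corollary~\ref{cor-aut} — and the sole place where a small extra remark is needed — is the treatment of the relation $\aleq$ itself and of the diagonal $=$, since Corollary~\ref{cor-aut} is phrased only for two \emph{different} relations. I do not expect this to be a genuine obstacle: the orbit argument for $\aleq$ is the very same transitivity coming from Proposition~\ref{prop-aut}, and the orbit argument for $=$ is the translation-invariance noted above. Once these two extra blocks are absorbed, the partition of $\Q^n\times\Q^n$ into the four orbits immediately closes the argument.
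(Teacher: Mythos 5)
Your proposal is correct and follows essentially the same route as the paper, which derives the corollary in one line from the fact that $\tleq$, $\lleq$, $\sleq$ and $=$ partition $\Q^n\times\Q^n$ together with the orbit/automorphism argument behind Corollary~\ref{cor-aut}. Your extra care about the two blocks not literally covered by the statement of Corollary~\ref{cor-aut} (namely $\aleq$ itself and the diagonal) is a sensible filling-in of detail that the paper leaves implicit, not a different argument.
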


In other words, Corollary~\ref{cor-atoms} says that, assuming $n=2$ or
$(\Q,+,\cdot,\le)$ is a Euclidean field, if relations $\tleq$,
$\sleq$ and $\lleq$ are definable in any of structures $(\Q^n,\tleq)$,
$(\Q^n,\lleq)$ and $(\Q^n,\sleq)$, then they and $=$ are atoms in the
corresponding Boolean algebra of definable binary concepts. We will
see that they are all definable except in $(\Q^2,\lleq)$, see
Remark~\ref{rem-2d} and Theorems~\ref{thm-ttosl}, \ref{thm-stol} and
\ref{thm-ltost} in Section~\ref{sec-interdef}. So except in case
$(\Q^2,\lleq)$, we have that the Boolean algebra of definable binary
concepts is the 16 element one illustrated by Figure~\ref{fig-BA}. In
case of $(\Q^2,\lleq)$, the algebra of definable binary concepts is
the 8 element one generated by relations $\lleq$ and $=$.

\begin{figure}[!htb]
  \usetikzlibrary{calc}
\begin{tikzpicture}[scale=1.5,>=stealth]

\pgfmathsetmacro{\xshift}{3}
\pgfmathsetmacro{\yshift}{1}

  \coordinate (A1) at (0,0);
  \coordinate(A2) at (-1,1); 
  \coordinate (A3) at (0,2); 
  \coordinate (A4) at (1,1); 

\begin{scope}[shift={(0,1)}]
  \coordinate (A6) at (0,0); 
  \coordinate (A7) at (-1,1);
  \coordinate (A8) at (0,2);
  \coordinate (A5) at (1,1);
\end{scope}

\begin{scope}[shift={(\xshift,\yshift)}]
  \coordinate (B1) at (0,0); 
  \coordinate (B2) at (-1,1);
  \coordinate (B3) at (0,2);
  \coordinate (B4) at (1,1);
\end{scope}

\begin{scope}[shift={(\xshift,\yshift+1)}]
  \coordinate (B6) at (0,0); 
  \coordinate (B7) at (-1,1);
  \coordinate (B8) at (0,2);
  \coordinate (B5) at (1,1);
\end{scope}

\foreach \i in {1,2,...,8}
{
\draw[dashed, thick, gray] (A\i) to (B\i);
}

\draw[thick, gray] (A1) to (B1);
\draw[thick, gray] (A8) to (B8);
\draw[thick, gray] (A5) to (B5);
\draw[thick, gray] (A6) to (B6);
\draw[thick, gray] (A7) to (B7);

\draw[very thick, darkgray] (A5) -- (A8);
\draw[very thick, darkgray,dashed] (A3) -- (A8);
\draw[very thick, darkgray] (A7) -- (A8);
\draw[very thick, darkgray] (A1) -- (A2);
\draw[very thick, darkgray,dashed] (A3) -- (A4);
\draw[very thick, darkgray] (A7) -- (A6);
\draw[very thick, darkgray] (A1) -- (A4);
\draw[very thick, darkgray] (A1) -- (A6);
\draw[very thick, darkgray,dashed] (A2) -- (A3);
\draw[very thick, darkgray] (A2) -- (A7);
\draw[very thick, darkgray] (A4) -- (A5);
\draw[very thick, darkgray] (A6) -- (A5);

\draw[very thick, darkgray] (B5) -- (B8);
\draw[very thick, darkgray,dashed] (B3) -- (B8);
\draw[very thick, darkgray] (B7) -- (B8);
\draw[very thick, darkgray] (B1) -- (B2);
\draw[very thick, darkgray,dashed] (B3) -- (B4);
\draw[very thick, darkgray] (B7) -- (B6);
\draw[very thick, darkgray] (B1) -- (B4);
\draw[very thick, darkgray] (B1) -- (B6);
\draw[very thick, darkgray,dashed] (B2) -- (B3);
\draw[very thick, darkgray] (B2) -- (B7);
\draw[very thick, darkgray] (B4) -- (B5);
\draw[very thick, darkgray] (B6) -- (B5);

\foreach \i in {1,2,...,8}
	{
	  \draw[fill=black] (A\i) circle (0.05);
 \draw[fill=black] (B\i) circle (0.05);
}

\node[below] at (A1) {$\emptyset$};
\node[below left] at (A2) {$\tleq$};
\node[below left] (tl) at (A6) {$\lleq$};
\node[above] (1) at (B8) {$\Q^n\times \Q^n$};
\node[below right] at (A4) {$\sleq$};
\node[below right] (neqsl) at (B1){$=$};
\node[above left] at (B7){$\slneq$};
\node[above left] at (A8){$\neq$};
\node[left] at (A7){$\slneq_{\tinyneq}$};
\node[above right] at (B5){$\tlneq$};
\node[above right] at (B3){$\llneq$};
\node[above right,xshift=-1] at (A3){$\llneq_{\tinyneq}$};
\node[below right,yshift=2] at (A5){$\tlneq_{\tinyneq}$};
\node[below right] at (B4) {$\sleq\etacup =$};
\node[right] at (B2) {$\tleq\etacup =$};
\node[right] at (B6) {$\lleq\etacup =$};

\end{tikzpicture}
  \caption{The figure illustrates the Boolean algebra generated by
    relations $\tleq$, $\lleq$, $\sleq$, and $=$. \label{fig-BA}}
\end{figure}

\section{Interdefinability using minimal number of variables}
\label{sec-interdef}

In this section, we are going to show that timelike, spacelike and
lightlike relatedness are definable from one another using 4
variables, but not definable using only 3 variables. To do so, let us
introduce first formula $\ttos$ defining spacelike relatedness $\sleq$
from timelike relatedness $\tleq$:
\begin{equation*}
  \ttos(x,y)\defeq x\neq y\kaj \forall z \big(z=x\aux z=y \aux \exists
  u (u\tleq z\kaj u\tlneq x \kaj u\tlneq y)\big).
\end{equation*}

Formula $\ttos(x,y)$ intuitively says that $x\neq y$ and inside the
light cone of every point $z$ (different from $x$ and $y$) there is a
point $u$ which is inside neither the light cone of $x$ nor that of
$y$, see Figure~\ref{fig-ttos}. Using this definition, we can easily
define $\lleq$ from $\tleq$ by the following formula:
\begin{equation*}
  \ttol(x,y)\defeq \lnot \ttos(x,y)\kaj x\tlneq y\kaj x\neq y.
\end{equation*}

\begin{figure}[h!tb]
  \begin{tikzpicture}[]

\pgfmathsetmacro{\conesize}{2}

\newcommand{\lcone}[4]{
\draw[red]   (#1-\conesize,#2+\conesize) to (#1,#2) to (#1+\conesize,#2+\conesize);
\fill[#4, opacity=.1] (#1,#2) to (#1-\conesize,#2+\conesize) to (#1+\conesize,#2+\conesize) to cycle ;
\draw[red]   (#1-\conesize,#2-\conesize) to (#1,#2) to (#1+\conesize,#2-\conesize);
\fill[#4, opacity=.1] (#1,#2) to (#1-\conesize,#2-\conesize) to (#1+\conesize,#2-\conesize) to cycle ;
\draw[fill] (#1,#2)  circle [radius=0.05] node[left]{#3};
}

\lcone{0}{0}{$x$}{blue}
\lcone{3}{0}{$y$}{green}
\lcone{1.3}{0.5}{$\forall z$}{magenta}
\lcone{1.7}{-0.7}{$\exists u$}{brown}
\end{tikzpicture}
  \caption{Formula $\ttos(x,y)$, defining spacelike relatedness
    $\sleq$ from timelike relatedness $\tleq$, intuitively says that
    $x$ and $y$ are distinct points and inside the light cone of every
    point $z$ (different from $x$ and $y$) there is a point $u$ which
    is inside neither the light cone of $x$ nor that of
    $y$. \label{fig-ttos}}
\end{figure}

\begin{theorem}[$n\ge 2$; \ref{eq-Eucl} or $n=2$]\label{thm-ttosl}
  Assume that $n=2$ or that $(\Q,+,\cdot,\le)$ is a Euclidean field.
  Then in model $(\Q^n,\tleq)$, spacelike relatedness $\sleq$ can be
  defined from timelike relatedness $\tleq$ using 4 variables by
  formula $\ttos$. Hence lightlike relatedness $\lleq$ can be defined
  from timelike relatedness $\tleq$ using 4 variables by formula
  $\ttol$.
\end{theorem}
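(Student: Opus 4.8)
The plan is to avoid a direct point‑by‑point analysis of the nested quantifiers of $\ttos$ and instead lean on the homogeneity already established. By Corollary~\ref{cor-atoms} (applicable since $n=2$ or $(\Q,+,\cdot,\le)$ is Euclidean), the binary relation that $\ttos$ defines in $(\Q^n,\tleq)$ is a union of some of the atoms $\tleq$, $\lleq$, $\sleq$, $=$; the explicit conjunct $x\neq y$ rules out $=$. By Proposition~\ref{prop-aut} the automorphism group acts transitively on each of the three remaining classes, so whether $\tleq$, $\lleq$ or $\sleq$ is among the atoms can be decided by testing a single representative pair of each kind. I would use the normalized pairs produced by the proof of Proposition~\ref{prop-aut}: $(0,\dots,0)$ paired with $(0,1,0,\dots,0)$ (spacelike), with $(1,0,\dots,0)$ (timelike), and with $(1,1,0,\dots,0)$ (lightlike). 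Thus it suffices to prove three facts: $\ttos$ holds on the spacelike pair and fails on the timelike and on the lightlike pairs. Once this reduction is made, the remaining verifications are purely algebraic inequalities requiring no square roots, hence valid over every ordered field; the Euclidean/$n=2$ hypothesis enters only through Corollary~\ref{cor-atoms} and Proposition~\ref{prop-aut}.

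For the spacelike pair $x=(0,\dots,0)$, $y=(0,1,0,\dots,0)$ I would, given any $z\neq x,y$, seek a witness $u$ sharing the spatial coordinates of $z$, i.e.\ $u=(s,z_1,\dots,z_{n-1})$. Then $u\tleq z$ reduces to $s\neq z_0$, while $u\tlneq x$ and $u\tlneq y$ reduce to $s^2$ being at most the squared spatial distances $m_x,m_y$ from $z$ to $x$ and to $y$. Since $z$ cannot coincide spatially with both $x$ and $y$, at least one of $m_x,m_y$ is positive. A short argument valid in any ordered field shows that for $m=\min(m_x,m_y)>0$ the set $\{s:s^2\le m\}$ has more than one element, so one can pick $s\neq z_0$; the degenerate case $m=0$ forces $s=0$ and is settled using $z\neq x,y$. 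This produces the required $u$ for every $z$, so $\ttos$ holds on the spacelike pair, giving $\sleq\subseteq$ the defined relation.

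For the two pairs where $\ttos$ must fail the idea is to exhibit one $z$ (distinct from $x,y$) whose entire timelike cone is swallowed by the cones of $x$ and $y$, so that no witness $u$ exists. For the timelike pair $x=(0,\dots,0)$, $y=(1,0,\dots,0)$ I take $z=(\frac12,0,\dots,0)$: if $u\tleq z$ and $u_0>\frac12$ then $u_0>u_0-\frac12>0$, so $u_0^2>(u_0-\frac12)^2>\sum_{i\ge1}u_i^2$, i.e.\ $u\tleq x$, and symmetrically the past half lies in the cone of $y$. For the lightlike pair $x=(0,\dots,0)$, $y=(1,1,0,\dots,0)$ I take the midpoint $z=(\frac12,\frac12,0,\dots,0)$ of the lightlike segment; the key identity is $(u_0-\frac12)^2-(u_1-\frac12)^2=(u_0^2-u_1^2)-(u_0-u_1)$, which together with the observation that every $u$ in the future cone of $z$ satisfies $u_0>u_1$ yields $u_0^2-\sum_{i\ge1}u_i^2>u_0-u_1>0$, i.e.\ $u\tleq x$; the past cone is handled by the point reflection through $z$, which swaps $x$ and $y$ and reverses time orientation, so it lands in the cone of $y$. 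In both cases $z\neq x,y$ and its cone is covered, so $\ttos$ fails, excluding $\tleq$ and $\lleq$; hence the defined relation is exactly $\sleq$.

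Finally, $\sleq$ being defined by $\ttos$ yields the lightlike definition for free: for $x\neq y$ exactly one of $x\tleq y$, $x\lleq y$, $x\sleq y$ holds, so $\lnot\ttos(x,y)\land x\tlneq y\land x\neq y$ selects precisely the lightlike pairs, i.e.\ $\ttol$ defines $\lleq$. Both formulas use only the four variables $x,y,z,u$. The main obstacle I anticipate is the lightlike case: unlike the timelike midpoint, the covering of the cone of $z=(\frac12,\frac12,0,\dots,0)$ is not visually evident and rests on the sign bookkeeping in the identity above, together with correctly absorbing the remaining spatial coordinates $u_2,\dots,u_{n-1}$ into the inequalities; making these estimates go through uniformly in $n$ and over an arbitrary ordered field is the delicate part.
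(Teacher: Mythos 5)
Your proof is correct, but it takes a genuinely different route from the paper's. The paper verifies both directions for \emph{arbitrary} pairs: for $\vx\sleq\vy$ it normalizes to a horizontal pair via Proposition~\ref{prop-aut} and splits into the cases $\vz\in H$ and $\vz\notin H$; for $\vx\slneq\vy$ it takes the midpoint $\vz$ of the segment $\vx\vy$ and runs a single unified causal-cone argument (the timelike future of $\vz$ lies in that of $\vx$, the timelike past in that of $\vy$), with no normalization needed. You instead make the orbit structure the organizing principle: by Corollary~\ref{cor-atoms} the relation defined by $\ttos$ is a union of the atoms, so it suffices to test one representative pair per atom, after which everything is explicit coordinate algebra. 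The geometric kernels coincide (your midpoint $z=(\tfrac12,0,\dots,0)$ resp.\ $(\tfrac12,\tfrac12,0,\dots,0)$ is the paper's midpoint witness in coordinates; your vertical witness $u=(s,z_1,\dots,z_{n-1})$ subsumes the paper's two cases in one formula), and all your inequality manipulations, including the lightlike identity $(u_0-\tfrac12)^2-(u_1-\tfrac12)^2=(u_0^2-u_1^2)-(u_0-u_1)$ and the point reflection through $z$, check out over an arbitrary ordered field. What your approach buys is a cleaner logical skeleton and fully explicit, field-independent verifications; what it costs is that even the ``$\ttos$ fails on non-spacelike pairs'' direction now depends on the transitivity of the automorphism group (hence on \ref{eq-Eucl} or $n=2$), whereas the paper's midpoint argument for that direction is normalization-free --- which is relevant to the paper's later remark that Theorem~\ref{thm-ttosl} in fact survives over non-Euclidean fields, where the orbit reduction is unavailable.
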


\begin{proof}
  Let us first show, for all $\vx,\vy\in\Q^n$, that $\vx\sleq\vy$
  holds exactly if $\ttos(\vx, \vy)$ holds.  To do so, assume that
  $\vx\slneq \vy$. Then we should show that $\ttos(\vx,\vy)$ does not
  hold. If $\vx=\vy$, then clearly $\ttos(\vx,\vy)$ does not hold as
  it contains $\vx\neq \vy$.  So assume that $\vx\neq \vy$. Let $\vz$
  be the midpoint of line segment $\vx\vy$. Then, since $\vx\neq \vz$
  and $\vz\neq \vy$ (as $\vx\neq \vy$), we should show that $\vu\tleq
  \vx$ or $\vu\tleq \vy$ holds for all $\vu$ for which $\vu\tleq \vz$
  holds. By symmetry, without loss of generality, we can assume that
  $\vy$ is in the causal future of $\vx$. There are two possibilities:
  if $\vu$ is in the timelike future of $\vz$, then $\vu$ is also in
  the timelike future of $\vx$ and hence $\vu\tleq \vx$ holds; if
  $\vu$ is in the timelike past of $\vz$, then $\vu$ is in the
  timelike past of $\vy$ and hence $\vu\tleq \vy$ holds, see
  Figure~\ref{fig-ttos1}.

  \begin{figure}[h!tb]
    \begin{tikzpicture}[]

\pgfmathsetmacro{\conesize}{2}

\newcommand{\flcone}[3]{
\draw[red]   (#1-\conesize,#2+\conesize) to (#1,#2) to (#1+\conesize,#2+\conesize);
\fill[#3, opacity=.3] (#1,#2) to (#1-\conesize,#2+\conesize) to (#1+\conesize,#2+\conesize) to cycle ;
}

\newcommand{\plcone}[3]{
\draw[red]   (#1-\conesize,#2-\conesize) to (#1,#2) to (#1+\conesize,#2-\conesize);
\fill[#3, opacity=.3] (#1,#2) to (#1-\conesize,#2-\conesize) to (#1+\conesize,#2-\conesize) to cycle ;
}

\begin{scope}[shift={(-3,0)}]
\flcone{0}{0}{blue!50}
\flcone{1.6}{1.6}{green!50}
\flcone{0.8}{0.8}{magenta!50}
 \draw[fill] (0,0)  circle [radius=0.05] node[below right]{$\vx$};
 \draw[fill] (0.8,0.8)  circle [radius=0.05] node[below right]{$\vz$};
 \draw[fill] (1.6,1.6)  circle [radius=0.05] node[below right]{$\vy$};
 \draw[fill] (0.7,1.6)  circle [radius=0.05] node[above left]{$\vu$};
\end{scope}

\begin{scope}[shift={(3,1.5)}]
\plcone{0}{0}{blue!50}
\plcone{1.6}{1.6}{green!50}
\plcone{0.8}{0.8}{magenta!50}
 \draw[fill] (0,0)  circle [radius=0.05] node[above left]{$\vx$};
 \draw[fill] (0.8,0.8)  circle [radius=0.05] node[above left]{$\vz$};
 \draw[fill] (1.6,1.6)  circle [radius=0.05] node[above left]{$\vy$};
 \draw[fill] (0.7,0)  circle [radius=0.05] node[below right]{$\vu$};
\end{scope}

\end{tikzpicture}
    \caption{\label{fig-ttos1}}
  \end{figure}

  To show the other direction, assume that $\vx\sleq \vy$. Without
  loss of generality, we can assume that $\vx$ and $\vy$ are in the
  same horizontal hyperplane $H$ because there is an automorphism of
  $(\Q^n,\tleq,\sleq)$ that maps any two spacelike related points to
  horizontally related ones, \cf Proposition~\ref{prop-aut}. We should
  show that for all $\vz$ which is different from $\vx$ and $\vy$,
  there is a $\vu$ timelike related to $\vz$, which is timelike
  related neither to $\vx$ nor to $\vy$. There are two cases, see
  Figure~\ref{fig-ttos2}:
  \begin{figure}[h!tb]
    \begin{tikzpicture}[scale=0.75]

\pgfmathsetmacro{\conesize}{2}

\newcommand{\flcone}[3]{
\draw[red]   (#1-\conesize,#2+\conesize) to (#1,#2) to (#1+\conesize,#2+\conesize);
\fill[#3, opacity=.1] (#1,#2) to (#1-\conesize,#2+\conesize) to (#1+\conesize,#2+\conesize) to cycle ;

\draw[red, fill=#3, fill opacity=0.1] (#1+\conesize,#2+\conesize) arc [start angle=0,end angle=180,x radius=\conesize, y radius=.1*\conesize];
\draw[red] (#1-\conesize,#2+\conesize) arc [start angle=180,end angle=360,x radius=\conesize, y radius=.1*\conesize];
}

\newcommand{\plcone}[3]{
\draw[red]   (#1-\conesize,#2-\conesize) to (#1,#2) to (#1+\conesize,#2-\conesize);

\fill[#3, opacity=.1] (#1,#2) to (#1-\conesize,#2-\conesize) to (#1+\conesize,#2-\conesize) to cycle ;

\draw[dashed, red] (#1+\conesize,#2-\conesize) arc [start angle=0,end angle=180,x radius=\conesize, y radius=.1*\conesize];
\draw[red, fill=#3, fill opacity=0.1] (#1-\conesize,#2-\conesize) arc [start angle=180,end angle=360,x radius=\conesize, y radius=.1*\conesize];
}

\newcommand{\lcone}[4]{
\draw[red]   (#1-\conesize,#2+\conesize) to (#1,#2) to (#1+\conesize,#2+\conesize);
\fill[#4, opacity=.1] (#1,#2) to (#1-\conesize,#2+\conesize) to (#1+\conesize,#2+\conesize) to cycle ;

\draw[red, fill=#4, fill opacity=0.1] (#1+\conesize,#2+\conesize) arc [start angle=0,end angle=180,x radius=\conesize, y radius=.1*\conesize];
\draw[red] (#1-\conesize,#2+\conesize) arc [start angle=180,end angle=360,x radius=\conesize, y radius=.1*\conesize];

\draw[fill] (#1,#2)  circle [radius=0.05] node[left]{#3};

\draw[red]   (#1-\conesize,#2-\conesize) to (#1,#2) to (#1+\conesize,#2-\conesize);

\fill[#4, opacity=.1] (#1,#2) to (#1-\conesize,#2-\conesize) to (#1+\conesize,#2-\conesize) to cycle ;

\draw[dashed, red] (#1+\conesize,#2-\conesize) arc [start angle=0,end angle=180,x radius=\conesize, y radius=.1*\conesize];
\draw[red, fill=#4, fill opacity=0.1] (#1-\conesize,#2-\conesize) arc [start angle=180,end angle=360,x radius=\conesize, y radius=.1*\conesize];
}

\begin{scope}[]
\draw[white, fill opacity=0.2, fill=darkgray] (-3,-1) to (-1,1)to (5,1)  to (3,-1) node[black,fill opacity=1,right]{$H$} to cycle;

\plcone{1.2}{1.5}{brown}

\draw[fill] (-0.3,0.5)  circle [radius=0.05] node[below left]{$\vx$};
\draw[fill] (3.3,0.5)  circle [radius=0.05] node[below left]{$\vy$};
\draw[fill] (1.2,1.5)  circle [radius=0.05] node[above left]{$\vz$};
\draw[fill] (1.2,-0.5)  circle [radius=0.05] node[above right]{$\vu$};
\end{scope}

\begin{scope}[shift={(8,0)}]
\draw[white, fill opacity=0.2, fill=darkgray] (-3,-1) to (-1,1)to (5,1)  to (3,-1) node[black,fill opacity=1,right]{$H$} to cycle;

\flcone{-0.3}{0.5}{blue}
\flcone{3.3}{.5}{green}
\flcone{1.5}{-0.5}{brown}

\draw[fill] (-0.3,0.5)  circle [radius=0.05] node[below left]{$\vx$};
\draw[fill] (3.3,0.5)  circle [radius=0.05] node[below left]{$\vy$};
\draw[fill] (1.5,-0.5)  circle [radius=0.05] node[below left]{$\vz$};
\draw[fill] (1.5,0)  circle [radius=0.05] node[above right]{$\vu$};
\end{scope}

\end{tikzpicture}
    \caption{\label{fig-ttos2}}
  \end{figure}
   \begin{itemize}
  \item either $\vz$ is not in the hyperplane $H$, and then there is a
    $\vu\in H$ vertically related and hence also timelike
    related to $\vz$; since $\vu\in H$, this $\vu$ is spacelike
    related to both $\vx$ and $\vy$;
  \item or $\vz$ is in the hyperplane $H$ and thus spacelike related
    to both $\vx$ and $\vy$, and hence there is a close enough $\vu$
    which is timelike related to $\vz$ but still spacelike related to
    $\vx$ and $\vy$.
  \end{itemize}
  
  The proof of the second part is straightforward since for all $\vx$
  and $\vy$ exactly one of relations $\vx\tleq \vy$, $\vx\lleq \vy$,
  $\vx\sleq \vy$ and $\vx=\vy$ holds.
\end{proof}

Timelike relatedness and lightlike relatedness can be defined from
spacelike relatedness by the following analogous
formulas:
\begin{equation*}
  \stot(x,y)\defeq x\neq y\kaj \forall z \big(z=x\aux z=y \aux
  \exists u (u\sleq z\kaj u\slneq x \kaj u\slneq y)\big).
\end{equation*}
\begin{equation*}
  \stol(x,y)\defeq \lnot \stot(x,y)\kaj x\slneq y\kaj x\neq
  y.\footnotemark
\end{equation*}\footnotetext{Let us note that this formula is basically the one mentioned by Malament~\cite[footnote 8]{Malament} defining lightlike relatedness  from causality relation $\bar\sigma$.}
Let us note here that, even though there is no symmetry between
relations $\sleq$ and $\tleq$ unless $n=2$, there is a nice symmetry
between defining formulas $\stot$ and $\ttos$ (as well as $\stol$ and
$\ttol$) as they can be achieved from each other by interchanging
relations $\sleq$ and $\tleq$.

\begin{figure}[h!tb]
  \begin{tikzpicture}[]

\pgfmathsetmacro{\conesize}{2}

\newcommand{\lcone}[4]{
\draw[red]   (#1-\conesize,#2+\conesize) to (#1,#2) to (#1+\conesize,#2+\conesize);
\fill[#4, opacity=.1] (#1,#2) to (#1-\conesize,#2+\conesize) to (#1+\conesize,#2+\conesize) to cycle ;
\draw[red]   (#1-\conesize,#2-\conesize) to (#1,#2) to (#1+\conesize,#2-\conesize);
\fill[#4, opacity=.1] (#1,#2) to (#1-\conesize,#2-\conesize) to (#1+\conesize,#2-\conesize) to cycle ;
\draw[fill] (#1,#2)  circle [radius=0.05] node[left]{#3};
}

\lcone{0}{0}{$x$}{blue}
\lcone{0}{2}{$y$}{green}
\lcone{-1}{1.7}{$\forall z$}{magenta}
\lcone{0.2}{1.15}{$\exists u$}{brown}

\end{tikzpicture}
  \caption{Formula $\stot(x,y)$, defining timelike relatedness $\tleq$
    from spacelike relatedness $\sleq$, intuitively says that $x$ and
    $y$ are distinct and outside the light cone of every point $z$
    (different from $x$ and $y$) there is a point $u$ which is outside
    neither the light cone of $x$ nor that of $y$. \label{fig-stot}}
\end{figure}

\begin{theorem}[$n\ge2$; \ref{eq-Eucl} or $n=2$]\label{thm-stol}
  Assume that $n=2$ or that $(\Q,+,\cdot,\le)$ is a Euclidean field.
  Then in model $(\Q^n,\sleq)$, timelike relatedness $\tleq$ can be
  defined from spacelike relatedness $\sleq$ using 4 variables by
  formula $\stot$. Hence lightlike relatedness $\lleq$ can be defined
  from spacelike relatedness $\sleq$ using 4 variables by formula
  $\stol$.
\end{theorem}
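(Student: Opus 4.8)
The plan is to prove that, for all $\vx,\vy\in\Q^n$, timelike relatedness $\vx\tleq\vy$ holds exactly when $\stot(\vx,\vy)$ holds, mirroring the proof of Theorem~\ref{thm-ttosl} in dual form. It is worth stressing at the outset that, although $\stot$ is obtained from $\ttos$ by the purely syntactic interchange of $\tleq$ and $\sleq$, this interchange is a genuine semantic symmetry of $(\Q^n,\tleq,\sleq)$ only when $n=2$; for $n>2$ the time axis is one-dimensional while the space directions span $n-1$ dimensions, so the argument has to be carried out again rather than quoted from Theorem~\ref{thm-ttosl}. Throughout I phrase the three relations through the sign of the Minkowski quadratic form $\mu(\vv)\defeq \nv_0^2-\nv_1^2-\cdots-\nv_{n-1}^2$: a pair is timelike, lightlike (or equal), spacelike related according as $\mu$ of the difference is positive, zero, or negative, so that $\vp\slneq\vq$ is equivalent to $\mu(\vp-\vq)\ge 0$.

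For the direction ``$\vx\tlneq\vy\Rightarrow\lnot\stot(\vx,\vy)$'' I would first dispose of $\vx=\vy$ (then $\stot$ fails because of its conjunct $x\neq y$), and then treat $\vx\neq\vy$ not timelike related, so that $\mu(\vy-\vx)\le 0$. As the witness defeating the universal part I would take $\vz$ to be the midpoint of $\vx$ and $\vy$, which differs from both, and invoke the polarization identity
\begin{equation*}
  \mu(\vu-\vz)=\tfrac12\big(\mu(\vu-\vx)+\mu(\vu-\vy)\big)-\tfrac14\,\mu(\vy-\vx),
\end{equation*}
valid for every $\vu$. Since $-\tfrac14\mu(\vy-\vx)\ge 0$, any $\vu$ with $\vu\slneq\vx$ and $\vu\slneq\vy$, i.e.\ with $\mu(\vu-\vx),\mu(\vu-\vy)\ge 0$, automatically satisfies $\mu(\vu-\vz)\ge 0$, that is $\vu\slneq\vz$. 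Hence no $\vu$ can be spacelike to $\vz$ yet causal to both $\vx$ and $\vy$, so the inner existential fails for this $\vz$ and $\stot(\vx,\vy)$ is false. This is the computational heart of the ``only if'' direction and needs nothing beyond ordered-field arithmetic.

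For the converse ``$\vx\tleq\vy\Rightarrow\stot(\vx,\vy)$'' I would use Proposition~\ref{prop-aut} to normalize the timelike pair to $\vx=(0,\dots,0)$ and $\vy=(1,0,\dots,0)$ on the time axis, which is legitimate because automorphisms preserve the truth of $\stot$. Given an arbitrary $\vz$ distinct from $\vx,\vy$, I would produce the required $\vu$ by the exact dual of the two-case argument in Theorem~\ref{thm-ttosl}, with the horizontal hyperplane replaced by the time axis and the vertical projection replaced by the horizontal one: if $\vz$ lies off the time axis, take $\vu$ to be the point of the time axis with the same $0$th coordinate as $\vz$, so that $\vu$ is spacelike to $\vz$ (purely spatial, nonzero separation) while being timelike to, or equal to, each of $\vx,\vy$; if $\vz$ lies on the time axis, then writing $\zeta$ for its $0$th coordinate, $\vz$ is strictly timelike to both $\vx$ and $\vy$, and a small spatial perturbation $\vu$ of $\vz$, of displacement size below $\min(|\zeta|,|\zeta-1|)$, is spacelike to $\vz$ yet still timelike to $\vx$ and $\vy$. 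The smallness needed in the second case is merely an order comparison, both bounds being positive since $\vz\neq\vx,\vy$, so it goes through over any ordered field.

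The part I expect to require the most care is this converse, witness-producing direction: it is exactly where the asymmetry between the one-dimensional time axis and the $(n-1)$-dimensional space appears, and it is why the theorem cannot be deduced from Theorem~\ref{thm-ttosl} for $n>2$. When $n=2$ the normalization step of Proposition~\ref{prop-aut} still applies over an arbitrary ordered field, and the remaining reasoning uses only ordered-field arithmetic, so the statement holds in that case too. Finally, the second assertion, that $\stol$ defines $\lleq$, follows exactly as the corresponding clause of Theorem~\ref{thm-ttosl}: since for any $\vx,\vy$ precisely one of $\vx\tleq\vy$, $\vx\lleq\vy$, $\vx\sleq\vy$, $\vx=\vy$ holds, the conjunction $\lnot\stot(\vx,\vy)\land \vx\slneq\vy\land \vx\neq\vy$ singles out the lightlike case.
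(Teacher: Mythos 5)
Your proposal is correct, and its witness-producing direction ($\vx\tleq\vy\Rightarrow\stot(\vx,\vy)$) is essentially the paper's argument: normalize the pair onto the time axis via Proposition~\ref{prop-aut}, then take the same-time point on the axis when $\vz$ is off the axis, and a sufficiently small spatial perturbation of $\vz$ when $\vz$ is on it. Where you genuinely diverge is in the other direction. The paper shows that the midpoint $\vz$ defeats the formula by a geometric case split ($\vx\lleq\vy$ versus $\vx\sleq\vy$, the latter normalized to a horizontal pair by another appeal to Proposition~\ref{prop-aut}) together with the claim that the intersection of the causal cones through $\vx$ and $\vy$ lies inside the causal cone through $\vz$. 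You instead prove exactly that containment in one stroke via the identity
\begin{equation*}
  \mu(\vu-\vz)=\tfrac12\big(\mu(\vu-\vx)+\mu(\vu-\vy)\big)-\tfrac14\,\mu(\vy-\vx),
\end{equation*}
which is a correct polarization computation for the Minkowski form and immediately gives $\mu(\vu-\vz)\ge 0$ whenever $\mu(\vu-\vx),\mu(\vu-\vy)\ge 0$ and $\mu(\vy-\vx)\le 0$. This buys you two things: the lightlike and spacelike cases are handled uniformly, and this half of the proof uses no automorphisms at all, hence works over an arbitrary ordered field --- consistent with the paper's later remark that Theorems~\ref{thm-ttosl} and~\ref{thm-stol} in fact hold over non-Euclidean fields even though the picturesque proofs do not. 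The Euclidean (or $n=2$) hypothesis is then isolated in the single normalization step of the converse direction, which is a cleaner accounting of where it is actually needed. Your closing observation that the $\tleq\leftrightarrow\sleq$ interchange is a semantic symmetry only for $n=2$, so the theorem cannot simply be quoted from Theorem~\ref{thm-ttosl}, is also correct and matches the paper's remark on this point.
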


\begin{proof}
  First we show, for all $\vx,\vy\in\Q^n$, that $\vx\tleq\vy$ holds
  exactly if $\stot(\vx, \vy)$ holds. So let us assume that $\vx\tleq
  \vy$. Without loss of generality, we can assume that $\vx$ and $\vy$
  are in the same vertical line $\ell$ because there is an
  automorphism of $(\Q^n,\sleq,\tleq)$ that maps any two timelike
  related points to vertically related ones, \cf
  Proposition~\ref{prop-aut}. We should show that for all $\vz$ which
  is different from $\vx$ and $\vy$, there is a $\vu$ spacelike
  related to $\vz$, which is  spacelike related neither to $\vx$ nor to
  $\vy$. To find such a $\vu$, let us consider the hyperplane $H$
  through $\vz$ orthogonal to $\ell$, see Figure~\ref{fig-stot2}. If
  $\vz\not\in\ell$, let $\vu$ be the intersection point of $\ell$ and
  $H$; and if $\vz\in\ell$, let $\vu\in H$ be such a point distinct
  from $\vz$ that the distance between $\vu$ and $\vz$ is less than
  both the distance of $\vz$ and $\vx$ and the distance of $\vz$ and
  $\vy$. As desired, by its choice, $\vu$ is spacelike related to
  $\vz$ but not to $\vx$ or $\vy$.

  \begin{figure}[h!tb]
    \begin{tikzpicture}[scale=0.75]
\pgfmathsetmacro{\conesize}{1.5}

\newcommand{\flcone}[3]{
\draw[red]   (#1-\conesize,#2+\conesize) to (#1,#2) to (#1+\conesize,#2+\conesize);
\fill[#3, opacity=.1] (#1,#2) to (#1-\conesize,#2+\conesize) to (#1+\conesize,#2+\conesize) to cycle ;

\draw[red, fill=#3, fill opacity=0.1] (#1+\conesize,#2+\conesize) arc [start angle=0,end angle=180,x radius=\conesize, y radius=.1*\conesize];
\draw[red] (#1-\conesize,#2+\conesize) arc [start angle=180,end angle=360,x radius=\conesize, y radius=.1*\conesize];
}

\newcommand{\plcone}[3]{
\draw[red]   (#1-\conesize,#2-\conesize) to (#1,#2) to (#1+\conesize,#2-\conesize);

\fill[#3, opacity=.1] (#1,#2) to (#1-\conesize,#2-\conesize) to (#1+\conesize,#2-\conesize) to cycle ;

\draw[dashed, red] (#1+\conesize,#2-\conesize) arc [start angle=0,end angle=180,x radius=\conesize, y radius=.1*\conesize];
\draw[red, fill=#3, fill opacity=0.1] (#1-\conesize,#2-\conesize) arc [start angle=180,end angle=360,x radius=\conesize, y radius=.1*\conesize];
}

\newcommand{\lcone}[4]{
\draw[red]   (#1-\conesize,#2+\conesize) to (#1,#2) to (#1+\conesize,#2+\conesize);
\fill[#4, opacity=.1] (#1,#2) to (#1-\conesize,#2+\conesize) to (#1+\conesize,#2+\conesize) to cycle ;

\draw[red, fill=#4, fill opacity=0.1] (#1+\conesize,#2+\conesize) arc [start angle=0,end angle=180,x radius=\conesize, y radius=.1*\conesize];
\draw[red] (#1-\conesize,#2+\conesize) arc [start angle=180,end angle=360,x radius=\conesize, y radius=.1*\conesize];

\draw[fill] (#1,#2)  circle [radius=0.05] node[left]{#3};

\draw[red]   (#1-\conesize,#2-\conesize) to (#1,#2) to (#1+\conesize,#2-\conesize);

\fill[#4, opacity=.1] (#1,#2) to (#1-\conesize,#2-\conesize) to (#1+\conesize,#2-\conesize) to cycle ;

\draw[dashed, red] (#1+\conesize,#2-\conesize) arc [start angle=0,end angle=180,x radius=\conesize, y radius=.1*\conesize];
\draw[red, fill=#4, fill opacity=0.1] (#1-\conesize,#2-\conesize) arc [start angle=180,end angle=360,x radius=\conesize, y radius=.1*\conesize];
}

\begin{scope}[]

\draw[dotted] (0,0.5)  to (0,1.2);
\draw[white, fill opacity=0.2, fill=darkgray] (-3,0.5) to (-1,1.5) to (5,1.5) to (3,0.5)  node[black,fill opacity=1,right]{$H$}  to cycle;

\flcone{0}{-1.2}{blue}
\plcone{0}{2.5}{green}
\flcone{2.7}{1}{brown}

\draw (0,1) to (0,3.5) node[below right]{$\ell$};
\draw (0,-1.5) to (0,0.5);
\draw[fill] (0,-1.2)  circle [radius=0.05] node[left]{$\vx$};
\draw[fill] (0,2.5)  circle [radius=0.05] node[left]{$\vy$};
\draw[fill] (2.7,1)  circle [radius=0.05] node[right]{$\vz$};
\draw[fill] (0,1)  circle [radius=0.05] node[right]{$\vu$};
\end{scope}

\begin{scope}[shift={(8,0)}]

\draw[dotted] (0,0.5)  to (0,1.2);
\draw[white, fill opacity=0.2, fill=darkgray] (-3,0.5) to (-1,1.5) to (4,1.5) to (2,0.5)  node[black,fill opacity=1,right]{$H$}  to cycle;

\flcone{0}{-1.2}{blue}
\plcone{0}{2.5}{green}

\draw (0,1) to (0,3.5) node[below right]{$\ell$};
\draw (0,-1.5) to (0,0.5);
\draw[fill] (0,-1.2)  circle [radius=0.05] node[left]{$\vx$};
\draw[fill] (0,2.5)  circle [radius=0.05] node[left]{$\vy$};
\draw[fill] (0,1)  circle [radius=0.05] node[right]{$\vz$};
\draw[fill] (-0.5,1)  circle [radius=0.05] node[left]{$\vu$};
\end{scope}

\end{tikzpicture}
    \caption{\label{fig-stot2}}
  \end{figure}
  
  To show the other direction, assume that $\vx\tlneq \vy$. Then we
  should show that $\stot(\vx,\vy)$ does not hold. If $\vx=\vy$, then
  clearly $\stot(\vx,\vy)$ does not hold as it contains $\vx\neq
  \vy$. So assume that $\vx\neq \vy$ and let us choose $\vz$ to be the
  midpoint of line segment $\vx\vy$, see Figure~\ref{fig-stot1}. Then,
  since $\vz\neq \vx$ and $\vz\neq \vy$ (as $\vx\neq \vy$), we should
  show that for all $\vu$, if $\vu\slneq \vx$ and $\vu\slneq \vy$,
  then $\vu\slneq \vz$. There are two cases to consider: either
  $\vx\lleq \vy$ or $\vx\sleq \vy$ (in this second case, we can assume
  that $\vx$ and $\vy$ are horizontally related by the usual
  automorphism argument, \cf Proposition~\ref{prop-aut}).  In both
  cases, $\vu\slneq \vz$ holds for all the points for which $\vu\slneq
  \vx$ and $\vu\slneq \vy$ because the intersection of causal cones
  through $\vx$ and $\vy$ are contained in the causal cone through
  $\vz$.

  \begin{figure}[h!tb]
    \begin{tikzpicture}[scale=0.85]

\pgfmathsetmacro{\conesize}{1.5}

\newcommand{\flcone}[5]{
\draw[#3]   (#1-#5,#2+#5) to (#1,#2) to (#1+#5,#2+#5);
\fill[#3, opacity=#4] (#1,#2) to (#1-#5,#2+#5) to (#1+#5,#2+#5) to cycle ;

\draw[#3, fill=#3, fill opacity=#4] (#1+#5,#2+#5) arc [start angle=0,end angle=180,x radius=#5, y radius=.1*#5];
\draw[#3] (#1-#5,#2+#5) arc [start angle=180,end angle=360,x radius=#5, y radius=.1*#5];
}

\newcommand{\plcone}[5]{
\draw[#3]   (#1-#5,#2-#5) to (#1,#2) to (#1+#5,#2-#5);

\fill[#3, opacity=#4] (#1,#2) to (#1-#5,#2-#5) to (#1+#5,#2-#5) to cycle ;

\draw[dashed, #3] (#1+#5,#2-#5) arc [start angle=0,end angle=180,x radius=#5, y radius=.1*#5];
\draw[#3, fill=#3, fill opacity=#4] (#1-#5,#2-#5) arc [start angle=180,end angle=360,x radius=#5, y radius=.1*#5];
}

\begin{scope}[]

\flcone{0}{0}{blue}{.1}{2}
\plcone{0}{0}{blue}{0.1}{2}

\flcone{-0.5}{-0.5}{green!63!black}{0.1}{2.5}
\plcone{-0.5}{-0.5}{green!63!black}{0.1}{1.5}

\flcone{0.5}{0.5}{brown!63!black}{0.1}{1.5}
\plcone{0.5}{0.5}{brown!63!black}{0.1}{2.5}

\draw[fill] (0,0)  circle [radius=0.05] node[left]{$\vz$};
\draw[fill] (-0.5,-0.5)  circle [radius=0.05] node[left]{$\vx$};
\draw[fill] (0.5,0.5)  circle [radius=0.05] node[right]{$\vy$};
\draw[fill] (.5,2)  circle [radius=0.05] node[right]{$\vu$};
\end{scope}

\begin{scope}[shift={(6,0)}]

\draw[white, fill opacity=0.1, fill=gray] (-3,-0.5) to (-1,0.5) to (5,0.5) to (3,-0.5)  to cycle;

\flcone{1}{0}{blue}{0.1}{2}
\plcone{1}{0}{blue}{0.1}{2}

\flcone{0}{0}{green!63!black}{0.1}{2}
\plcone{0}{0}{green!63!black}{0.1}{2}

\flcone{2}{0}{brown!63!black}{0.1}{2}
\plcone{2}{0}{brown!63!black}{0.1}{2}


\draw[fill] (0,0)  circle [radius=0.05] node[left]{$\vx$};
\draw[fill] (2,0)  circle [radius=0.05] node[right]{$\vy$};
\draw[fill] (1,0)  circle [radius=0.05] node[right]{$\vz$};
\draw[fill] (1,2)  circle [radius=0.05] node[right]{$\vu$};
\end{scope}

\end{tikzpicture}
    \caption{\label{fig-stot1}}
  \end{figure}
  
  As before, the proof of the second part is straightforward since for
  all $\vx$ and $\vy$ exactly one of relations $\vx\tleq \vy$,
  $\vx\lleq \vy$, $\vx\sleq \vy$ and $\vx=\vy$ holds.
\end{proof}

\begin{figure}[!htb]
    \begin{tikzpicture}[]

\pgfmathsetmacro{\conesize}{2}

\newcommand{\lcone}[4]{
\draw[red]   (#1-\conesize,#2+\conesize) to (#1,#2) to (#1+\conesize,#2+\conesize);
\fill[#4, opacity=.1] (#1,#2) to (#1-\conesize,#2+\conesize) to (#1+\conesize,#2+\conesize) to cycle ;

\draw[red, fill=#4, fill opacity=0.1] (#1+\conesize,#2+\conesize) arc [start angle=0,end angle=180,x radius=\conesize, y radius=.1*\conesize];
\draw[red] (#1-\conesize,#2+\conesize) arc [start angle=180,end angle=360,x radius=\conesize, y radius=.1*\conesize];

\draw[fill] (#1,#2)  circle [radius=0.05] node[left]{#3};

\draw[red]   (#1-\conesize,#2-\conesize) to (#1,#2) to (#1+\conesize,#2-\conesize);

\fill[#4, opacity=.1] (#1,#2) to (#1-\conesize,#2-\conesize) to (#1+\conesize,#2-\conesize) to cycle ;

\draw[dashed, red] (#1+\conesize,#2-\conesize) arc [start angle=0,end angle=180,x radius=\conesize, y radius=.1*\conesize];
\draw[red, fill=#4, fill opacity=0.1] (#1-\conesize,#2-\conesize) arc [start angle=180,end angle=360,x radius=\conesize, y radius=.1*\conesize];
}

\draw[white, fill opacity=0.1, fill=red] (-1.1-2,-2.2+0.3) to (1.6-2,3.2+0.3)to (3.6+2,2.9-0.3) node[below ,black,fill opacity=1]{tangent (hyper)plane}  to (0.9+2,-2.5-0.3)  to cycle;
\draw[red,dashed]  (-1.1,-2.2)to (1.6,3.2);
\draw[red,dashed, shift={(2,-0.3)}]  (-1.1,-2.2)to (1.6,3.2);

\lcone{0}{0}{$x$}{blue}
\lcone{2}{-0.3}{$y$}{green}
\lcone{.5}{1}{$\exists z$}{brown}
\draw (2+1.5,-0.3+3) circle [radius=.05] node[left]{$\nexists u$};
\draw[red,dotted,shorten <=-0.3cm,shorten >=-0.3cm] (0,0) to[out=45,in=200] (2+1.5,-0.3+3);
\draw[red,dotted,shorten <=-0.3cm,shorten >=-0.3cm] (0.5,1) to[out=70,in=200] (2+1.5,-0.3+3);

\end{tikzpicture}
    \caption[Caption]{Formula $\ltos(x,y)$, defining spacelike
    relatedness $\sleq$ from lightlike relatedness $\lleq$,
    intuitively says that $x$ and $y$ are not lightlike related, and
    there is a point $z\neq x$ on the light cone of $x$ such that $z$
    is not lightlike related to $y$ and there is no point $u$ on the
    light cone of $y$ to which both $x$ and $z$ are lightlike
    related. Since there are no non-degenerate lightlike
    triangles, this basically means that there
    is a lightlike line $\ell$ containing $x$ but not $y$ such that
    there is no lightlike line through $y$ which intersects
    $\ell$.}\label{fig-ltos}
\end{figure}

The following formulas are based on ideas used in \cite[Appendix
  B]{Goldblatt}:
\begin{equation*}
  \ltos(x,y)\defeq x\llneq y \kaj \exists z \big( x\lleq z\kaj y\llneq
  z\kaj\neg\exists u (u\lleq x\kaj u\lleq y\kaj u\lleq z)\big).
\end{equation*}
\begin{equation*}
  \ltot(x,y)\defeq \lnot \ltos(x,y)\kaj x\llneq y\kaj x\neq y.
\end{equation*}

\begin{theorem}[\ref{eq-Eucl} and $n\ge3$]\label{thm-ltost}
  Let $n\ge3$ and assume that ordered field $(\Q,+,\cdot, \le)$ is
  Euclidean. In model $(\Q^n,\lleq)$, spacelike relatedness $\sleq$
  can be defined from lightlike relatedness $\lleq$ using 4
  variables by formula $\ltos$. Hence timelike relatedness $\tleq$ can
  be defined from lightlike relatedness $\lleq$ using 4 variables
  by formula $\ltot$.
\end{theorem}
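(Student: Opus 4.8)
The plan is to prove the equivalence $\vx\sleq\vy\iff\ltos(\vx,\vy)$ for all $\vx,\vy\in\Q^n$; the assertion about $\ltot$ then follows exactly as in Theorems~\ref{thm-ttosl} and~\ref{thm-stol}, since for all $\vx,\vy$ precisely one of $\vx\tleq\vy$, $\vx\lleq\vy$, $\vx\sleq\vy$, $\vx=\vy$ holds. As before I would use Proposition~\ref{prop-aut} to move $\vx,\vy$ into a convenient canonical position: the formula $\ltos$ is built solely from $\lleq$, so its truth value and the relation $\sleq$ are both preserved by automorphisms of $(\Q^n,\tleq,\lleq,\sleq)$. The one conceptual tool I would isolate first is the \emph{collinearity lemma}: any three pairwise distinct, pairwise lightlike related points lie on a common lightlike line. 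This is the ``no non-degenerate lightlike triangle'' fact: if $\vy-\vx$, $\vz-\vx$ and $\vz-\vy$ are all lightlike, then $\vy-\vx$ and $\vz-\vx$ are parallel, which one checks by a short computation with the defining quadratic expressions (the cross term must vanish, and then a vanishing sum of squares forces proportionality); it uses only that $\Q$ is an ordered field.

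Next I would reinterpret the formula geometrically. Since the clause $x\lleq z$ forces $\vz\neq\vx$, there is a unique lightlike line $\ell$ through $\vx$ and $\vz$, and by the collinearity lemma every $\vu$ with $\vu\lleq\vx$ and $\vu\lleq\vz$ lies on $\ell$; conversely every point of $\ell$ distinct from $\vx$ (resp.\ $\vz$) is lightlike related to $\vx$ (resp.\ $\vz$). Moreover $\vy\notin\ell$, since $\vy\in\ell$ would give $\vy\lleq\vz$, contradicting the clause $y\llneq z$. Consequently the clause $\neg\exists u(u\lleq x\kaj u\lleq y\kaj u\lleq z)$ says exactly that no point of $\ell$ is lightlike related to $\vy$, \ie that $\ell$ is disjoint from the light cone $\Lambda_{\vy}$; the apparent exclusions $\vu\neq\vx,\vz$ are automatic, because $\vx\llneq\vy$ and $\vy\llneq\vz$ prevent $\vx$ and $\vz$ from lying on $\Lambda_{\vy}$. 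Thus $\ltos(\vx,\vy)$ holds if{}f $\vx\llneq\vy$ and \emph{some} lightlike line through $\vx$ avoids $\Lambda_{\vy}$.

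With this reformulation the proof splits into four cases. If $\vx\lleq\vy$, the first conjunct $x\llneq y$ fails, so both $\ltos(\vx,\vy)$ and $\vx\sleq\vy$ are false. If $\vx=\vy$, any $\vz$ with $x\lleq z$ already gives $\vy\lleq\vz$, contradicting $y\llneq z$, so no witness exists and again both sides are false. If $\vx\tleq\vy$, I would normalize to $\vx=(0,\ldots,0)$ and $\vy=(1,0,\ldots,0)$; every lightlike line through $\vx$ has the form $\{s\cdot(1,\ve):s\in\Q\}$ with $\ve$ a unit spatial vector, and substituting into the light-cone equation of $\vy$ yields the linear equation $s=1/2$, which always has a solution. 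Hence for every candidate $\vz$ the line through $\vx$ and $\vz$ meets $\Lambda_{\vy}$ at a point distinct from $\vz$ (that point lies on $\Lambda_{\vy}$ while $\vz$ does not), furnishing the forbidden $\vu$; so no witness survives and $\ltos(\vx,\vy)$ fails, matching $\vx\sleq\vy$ false. Finally, if $\vx\sleq\vy$, I would normalize to $\vx=(0,\ldots,0)$ and $\vy=(0,1,0,\ldots,0)$ and take the lightlike line through $\vx$ in direction $(1,0,1,0,\ldots,0)$ (this uses $n\ge3$, \ie a spatial direction orthogonal to the $\vx\vy$-separation); substituting $(s,0,s,0,\ldots,0)$ into the light-cone equation of $\vy$ gives $0=1$, so this line avoids $\Lambda_{\vy}$, whence $\ltos(\vx,\vy)$ holds. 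Choosing $\vz=(1,0,1,0,\ldots,0)$ on it, one verifies $x\lleq z$, $y\llneq z$ and $x\llneq y$, so the witness is legitimate.

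The main obstacle, and the place where the hypotheses enter, is the contrast between the timelike and spacelike cases: timelike separation requires the \emph{universal} statement that all null lines through $\vx$ hit $\Lambda_{\vy}$, whereas spacelike separation requires only the \emph{existence} of one null line through $\vx$ missing $\Lambda_{\vy}$, and such a line exists precisely because $n\ge3$ supplies a spare spatial dimension. When $n=2$ the only null directions are $(1,\pm1)$, and each meets $\Lambda_{\vy}$ even for spacelike $\vx,\vy$ (the analogous substitution again gives a solvable linear equation), which is exactly why the theorem is restricted to $n\ge3$. The Euclidean hypothesis is used only to invoke Proposition~\ref{prop-aut} in the two normalization steps; the collinearity lemma and both light-cone computations go through over any ordered field.
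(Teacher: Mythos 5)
Your proof is correct, and it rests on the same geometric characterization that drives the paper's proof: after observing that lightlike triangles are degenerate, the subformula $\neg\exists u(u\lleq x\kaj u\lleq y\kaj u\lleq z)$ says that the null line $\ell$ through $\vx$ and $\vz$ is disjoint from $\Lambda_{\vy}$, so $\ltos(\vx,\vy)$ asserts that some lightlike line through $\vx$ misses $\Lambda_{\vy}$. The paper states this reformulation only informally (in the caption of Figure~\ref{fig-ltos}); you isolate and prove it, which is a genuine improvement since both halves of it are used silently in the paper's argument. Where you diverge is in how the two nontrivial cases are settled. For $\vx\sleq\vy$ the paper argues synthetically with a tangent hyperplane $H$ of $\Lambda_{\vx}$ through $\vy$, relying on the unproved facts that such an $H$ exists and that all lightlike lines in $H$ are parallel; you normalize by Proposition~\ref{prop-aut} and exhibit the ruling direction $(1,0,1,0,\ldots,0)$ explicitly, where disjointness from $\Lambda_{\vy}$ becomes the contradiction $0=1$. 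For $\vx\tleq\vy$ the paper intersects the null line $\vx\vz$ with a nonparallel null line through $\vy$ in the plane of $\vx,\vy,\vz$; you show directly that every null line through $\vx$ meets $\Lambda_{\vy}$ via the linear equation $s=1/2$. Your version is more elementary and self-contained, at the price of invoking the normalization twice, which is legitimate here: $n\ge3$, the field is Euclidean, and $\ltos$ is written in the language of $\lleq$ alone, so automorphisms of $(\Q^n,\tleq,\lleq,\sleq)$ preserve its truth value as well as the relations $\tleq$ and $\sleq$. Two small points deserve an explicit word: in the timelike case the intersection point $\frac{1}{2}(1,\ve)$ must be seen to be distinct from $\vx$, $\vy$ and $\vz$ (clear, since $s=1/2\neq0$, its time coordinate differs from that of $\vy$, and $\vz\notin\Lambda_{\vy}$) so that it is genuinely $\lleq$-related to all three; and in the reformulation the degenerate possibility $\vy=\vz$ is excluded because $\vx\lleq\vz$ together with $\vx\llneq\vy$ forbids it.
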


\begin{proof}
  First we show, for all $\vx,\vy\in\Q^n$, that $\vx\sleq\vy$ holds
  exactly if $\ltos(\vx, \vy)$ holds. Thus let us assume that
  $\vx\sleq \vy$. Then we should show that $\ltos(\vx,\vy)$ also
  holds. To do so, consider the light cone $\Lambda_{\vx}$ through
  $\vx$ and take a tangent hyperplane $H$ to this light cone
  containing $\vy$, such $H$ exists because $\vx$ is spacelike related
  to $\vy$ and $(\Q,+,\cdot,\le)$ is a Euclidean field, see
  Figure~\ref{fig-ltos}. Let $\vz$ be any point but $\vx$ from line
  $\ell:=\Lambda_{\vx}\cap H$. Then $\vx\llneq \vy$ holds
    because $\vx$ and $\vy$ are assumed to be spacelike related. Since
    $\vz\in\Lambda_{\vx}$ and $\vz\neq\vx$, we have that
    $\vx\lleq\vz$.  Since it is a tangent hyperplane of a light
      cone, all lightlike lines are parallel in $H$. Hence
      $\vy\llneq\vz$ also holds since $\vy\not\in\ell$ (as
      $\vy\not\in\Lambda_{\vx}$). So it remains to show that there is
    no $\vu$ for which $\vu\lleq \vx$, $\vu\lleq \vy$ and $\vu\lleq
    \vz$. To show this, let $\vu$ be lightlike related to both $\vx$
  and $\vz$. Then $\vu\in\ell$ because all lightlike triangles are
  degenerate and $\vu\vx\vz$ is a lightlike triangle. Therefore, $\vu$
  cannot be lightlike related to $\vy$ because all lightlike lines are
  parallel in $H$. Hence, there is no $\vu$ for which all of
    $\vu\lleq \vx$, $\vu\lleq \vy$ and $\vu\lleq \vz$
    hold. Consequently, $\ltos(\vx,\vy)$ holds and this is what we
  wanted to show.

  Let us now assume that $\vx\slneq \vy$. Then we should show that
  $\ltos(\vx,\vy)$ does not hold, \ie $\vx\lleq\vy$ or,
  for all $\vz$ which is lightlike related to $\vx$ but not
    lightlike related to $\vy$, we should be able to find a
  $\vu$ such that $\vu$ is lightlike related to $\vx$, $\vy$ and
  $\vz$. Assumption $\vx\slneq \vy$ means that $\vx=\vy$,
    $\vx\lleq\vy$ or $\vx\tleq \vy$. Since $\vx\lleq \vz$ and
      $\vy\llneq \vz$ imply that $\vx\neq\vy$, the only nontrivial
    case to be checked is $\vx\tleq \vy$.

  So let us assume that $\vx\tleq \vy$ and take any $\vz$ for
    which $\vz\lleq\vx$ and $\vz\llneq\vy$ hold. Then let us consider
  the lightlike line containing $\vx$ and $\vz$ and the
  nonparallel lightlike line through $\vy$ in the plane determined by
  $\vx$, $\vy$ and $\vz$ (this lightlike line through
    $\vy$ exists because the plane determined by $\vx$, $\vy$ and
    $\vz$ contains a timelike line). These two lines intersect
  because they are coplanar nonparallel lightlike lines. Let
    $\vu$ be their intersection. By its choice, this $\vu$ is
    lightlike related to $\vx$, $\vu$ and $\vz$ as desired. 
  
  Again the proof of the second part is straightforward since for all
  $\vx$ and $\vy$ exactly one of relations $\vx\tleq \vy$, $\vx\lleq
  \vy$, $\vx\sleq \vy$, and $\vx=\vy$ holds.
\end{proof}

\begin{remark}[$n=2$]\label{rem-2d}
  Over any ordered field $(\Q,+,\cdot,\le)$, if $n=2$, neither
  spacelike relatedness $\sleq$ nor timelike relatedness $\tleq$ can
  be defined from lightlike relatedness $\lleq$ because map $\alpha:
  (\nt,\nx)\mapsto (\nx,\nt)$ is an automorphism of model
  $(\Q^2,\lleq)$ taking relation $\sleq$ to $\tleq$.
\end{remark}

\begin{remark}[Non-\ref{eq-Eucl}]\label{rem-nonEucl}
  The assumption that $(\Q,+,\cdot, \le)$ is Euclidean cannot be
  omitted from Theorem~\ref{thm-ltost}. To see this, let us consider
  the set of real numbers which can be written as $\na+\nb\sqrt{2}$
  for some rational numbers $\na$ and $\nb$. It is known and also easy
  to check that these numbers form an ordered subfield of real
  numbers. Map $\alpha: \nx+\ny\sqrt{2}\mapsto \nx-\ny\sqrt{2}$
  preserves the addition and the multiplication in this field, and
  hence bijection $\hat\alpha: (\np_0,\np_1,\ldots,\np_{n-1})\mapsto
  (\alpha(\np_0),\alpha(\np_1),\ldots,\alpha(\np_{n-1}))$ takes lines
  to lines, and preserves lightlike relatedness. Since $\hat\alpha$
  interchanges timelike vector $\left(1,1-\sqrt{2}/2,0\ldots,0\right)$
  and spacelike vector $\left(1,1+\sqrt{2}/2,0,\ldots,0\right)$,
  neither timelike relatedness nor spacelike relatedness can be
  defined from lightlike relatedness over this ordered field. This
  counterexample can be generalized over any ordered field
  $(\Q,+,\cdot, \le)$ which has an automorphism $\alpha$ of
  $(\Q,+,\cdot)$ which does not preserve the ordering $\le$. This is
  so because in those fields there is an element $\nb>0$ such that
  $\alpha(\nb)<0$. If $\nb<1$, then $(1,1-\nb,0,\ldots 0)$ is a
  timelike vector whose image is spacelike, otherwise
  $(1,1-1/\nb,0,\ldots, 0)$ is such a vector.
\end{remark}

\begin{remark}[Non-\ref{eq-Eucl}]
  Even though their picturesque proof used in this paper do not work
  over arbitrary ordered fields since they use
  Proposition~\ref{prop-aut} which does not hold, for example, in the
  field of rational numbers, Theorems~\ref{thm-ttosl} and
  \ref{thm-stol} hold over non-Euclidean fields.
\end{remark}

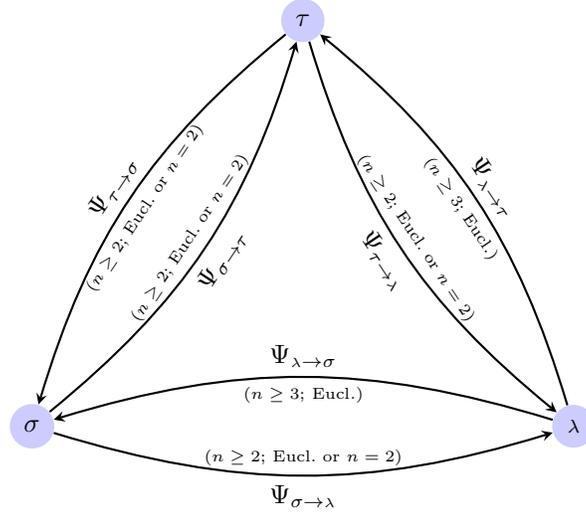
\begin{figure}[h!tb]
  \begin{tikzpicture}[>=stealth,scale=1.2]
\tikzstyle{nyil}=[->,->,bend right=17, thick]
\tikzstyle{bogyo}=[circle,, fill=blue!20]

\node[bogyo] (t) at (0,4.5){$\tleq$};
\node[bogyo] (l) at (3,0){$\lleq$};
\node[bogyo] (s) at (-3,0){$\sleq$};

\draw[nyil] (t) to node[above,sloped,] {$\ttos$} node[below,sloped]
{\tiny$(n\ge 2$; \ref{eq-Eucl} or $n=2)$ } (s);

\draw[nyil] (s) to node[below,sloped,] {$\stot$} node[above,sloped] {
\tiny$(n\ge 2$; \ref{eq-Eucl} or $n=2)$ } (t);

\draw[nyil] (s) to node[below, sloped] {$\stol$} node[above,sloped]
{\tiny$(n\ge 2$; \ref{eq-Eucl} or $n=2)$} (l);

\draw[nyil] (l) to node[above, sloped] {$\ltos$} node[below,sloped] {
\tiny$(n\ge 3$; \ref{eq-Eucl})} (s);

\draw[nyil] (l) to node[above,sloped] {$\ltot$} node[below,sloped] {
\tiny$(n\ge 3$; \ref{eq-Eucl})} (t);

\draw[nyil] (t) to node[below, sloped] {$\ttol$} node[above,sloped]
{\tiny$(n\ge 2$; \ref{eq-Eucl} or $n=2)$} (l);

\end{tikzpicture}
  \caption{Here we illustrate formulas defining timelike $\tleq$,
    spacelike $\sleq$, lightlike relatedness $\lleq$ from each other
    using only 4 variables in the corresponding space-time dimensions
    $n$. }
\end{figure}

\begin{theorem}[$n\ge 2$]\label{thm-nondef}
  Over any ordered field $(\Q,+,\cdot,\le)$, from none of the
  relations $\tleq$, $\lleq$ and $\sleq$, another one of them can be
  defined using only 3 variables.
\end{theorem}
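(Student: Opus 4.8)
The plan is to read ``definability using only $3$ variables'' as definability in the $3$-variable fragment $L^3$ of first-order logic, and to refute it by exhibiting, for each source relation, two pairs of points that satisfy exactly the same $L^3$-formulas in the corresponding model but differ on the relation we are trying to define. The crucial preliminary observation is that the three pairings we must rule out are invisible at the atomic level: in $(\Q^n,\tleq)$ a spacelike pair and a lightlike pair have the same atomic type (both satisfy $x\tlneq y\kaj x\neq y$); in $(\Q^n,\sleq)$ a timelike and a lightlike pair agree ($x\slneq y\kaj x\neq y$); and in $(\Q^n,\lleq)$ a timelike and a spacelike pair agree ($x\llneq y\kaj x\neq y$). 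So it suffices to prove three $L^3$-equivalences, one per source relation, each of which simultaneously blocks both candidate definitions from that source.

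The method is the $3$-pebble Ehrenfeucht--Fra\"iss\'e game: two pairs satisfy the same $L^3$-formulas exactly if Duplicator has a winning strategy, and a standard sufficient condition is the existence of a back-and-forth system. With only three pebbles, two of which are consumed by the pair under test, this reduces to a single geometric condition, which I will call the \emph{$2$-extension property}: for the source relation $\aleq\in\{\tleq,\lleq,\sleq\}$, every partial $\aleq$-isomorphism whose domain has at most two points can be extended, by an arbitrary prescribed third domain point, to a partial $\aleq$-isomorphism. The key lemma behind this is that the relation of a variable point to a fixed pair $\vp,\vq$ partitions $\Q^n$ into four regions according to whether it is $\aleq$-related to $\vp$ and to $\vq$, and I claim that for every two distinct points $\vp,\vq$ and every $\aleq$ \emph{all four regions are nonempty}. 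Granting this, the family of all partial $\aleq$-isomorphisms on at most three points is a back-and-forth system (the nontrivial extension step, matching the type of a new point over one pair by a point over the image pair, succeeds precisely because the target region is nonempty), so any two pairs of the same atomic type are $L^3$-equivalent, which yields all three non-definability statements at once.

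It remains to verify the four-regions lemma over an arbitrary ordered field, using only the automorphisms available there, namely translations, uniform scalings, coordinate permutations and sign flips, and time reversal, \cf Remark~\ref{rem-aut}. For $\aleq\in\{\tleq,\sleq\}$ this is routine: after translating $\vp$ to the origin, each region is full-dimensional, and one exhibits an explicit witness with suitably large coordinates (a point high up the time axis lies in both cones; a point far out a spatial axis lies outside both; intermediate points give the two mixed regions), no square roots being needed. For $n=2$ the lightlike source is settled directly by the automorphism $(\nt,\nx)\mapsto(\nx,\nt)$ of Remark~\ref{rem-2d}, which interchanges $\sleq$ and $\tleq$ and hence makes the relevant pairs automorphic, a fortiori $L^3$-equivalent. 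The genuinely delicate case, and the step I expect to be the main obstacle, is the lightlike source for $n\ge3$: here the regions are light-cone surfaces, and one must produce, over a possibly non-Euclidean field, a point $\vr$ lightlike related to \emph{both} $\vp$ and $\vq$ (for every separation type of the pair) with coordinates in $\Q$. The plan for this is to translate $\vp$ to the origin and to seek $\vr$ among null vectors supported on the time axis and two spatial axes, using the rational parametrization $(m^2+k^2,\,m^2-k^2,\,2mk)$ of the two-dimensional light cone together with the remaining scaling freedom to satisfy the single linear constraint $B(\vr,\vq)=Q(\vq)/2$ coming from $\vr-\vq$ being null; a coordinate permutation reduces to the case where the constraint is nondegenerate, so rational $m,k$ and a rational scalar can be chosen. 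The remaining mixed and empty regions for the lightlike source are then immediate.
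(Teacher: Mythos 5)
Your proposal is correct, but it reaches the conclusion by a genuinely different route from the paper. The paper invokes the Tarski--Givant correspondence: a binary relation is definable from binary relations using only 3 variables if{}f it lies in the generated relation algebra (closure under $\cap$, $\cup$, $\komplemento$, $\comp$, $\converse$ together with $\Id$), and then verifies that the eight-element set $\{\emptyset,\Id,\neq,\aleq,\alneq,\alneq\cap\neq,\aleq\cup\Id,\Q^n\times\Q^n\}$ is closed under these operations; the only substantive step is computing the compositions of the atoms, namely $\aleq\comp\aleq=\alneq_{\tinyneq}\comp\alneq_{\tinyneq}=\Q^n\times\Q^n$ and $\aleq\comp\alneq_{\tinyneq}=\alneq_{\tinyneq}\comp\aleq=\mathord{\neq}$. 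Your 3-pebble back-and-forth argument is the model-theoretic mirror image of this: your ``all four regions are nonempty'' lemma is exactly the statement that those four compositions contain $\neq$, so the geometric content you must verify coincides with the paper's --- including the one genuinely delicate point, that any two points admit a common $\lleq$-neighbour over an arbitrary ordered field when $n\ge3$ (the paper needs this too, as $\lleq\comp\lleq=\Q^n\times\Q^n$, and asserts it with no more detail than you give; your rational parametrization of the null cone plus a permutation to make the linear constraint nondegenerate is a sound way to discharge it). The paper's route buys brevity (one citation plus a closure computation) and, as a by-product, an exact description of all 3-variable-definable binary relations as the eight members of that relation algebra; your route buys self-containedness and an explicit indistinguishability statement for the relevant pairs. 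Two small points to tighten: the four regions must be witnessed by points \emph{distinct from} $\vp$ and $\vq$, since the atomic type of the newly pebbled point also records equality with the pebbled pair (your witnesses satisfy this, but it should be said), and the extension property should also be recorded for partial isomorphisms of size $0$ and $1$, which is immediate from the nonemptiness of the two regions determined by a single point.
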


\begin{proof}
  By a theorem of Tarski and Givant \cite{TG87}, see also
  \cite[Prop. 3.18, p.111]{BD07}, a binary relation $B$ on a set $U$
  is first-order definable from a set $\mathcal{R}$ of binary
  relations on $U$ using only 3 variables if{}f $B$ can be built up
  from members of $\mathcal{R}$ and the identity relation $\Id$ by
  using the following operations: intersection $\cap$, union $\cup$,
  complement $\komplemento$, relation composition $\comp$ and converse
  $\converse$. Let $\aleq$ be any of the relations $\tleq$, $\lleq$
  and $\sleq$. To show that none of the other relations is definable
  from $\aleq$, we are going to show that set
  $$\mathcal{S}=\big\{\emptyset,\; \Id,\; \neq,\; \aleq,\; \alneq,\;
  \alneq\cap\neq,\; \aleq\cup\Id,\;\Q^n\times\Q^n\big\}$$is closed
  under the above relation operations. $\mathcal{S}$ is clearly closed
  under complement by De Morgan's law and it is easy to check that
  $\mathcal{S}$ is closed under intersection and union, \cf
  Figure~\ref{fig-BA}. Hence $\mathcal{S}$ is a \emph{field of
    sets}\footnote{That is a subalgebra of the power set Boolean
    algebra of $\Q^n\times\Q^n$.} with atoms $\Id$, $\aleq$ and
  $\alneq_{\tinyneq}$ (which is $\alneq\cap \neq$). Since identity $(P
  \cup Q) \comp R \equiv (P \comp R) \cup (Q \comp R)$ holds for all
  binary relations $P$, $Q$ and $R$, it is enough to show that the
  various compositions of atoms $\Id$, $\aleq$ and $\alneq_{\tinyneq}$
  are in $\mathcal{S}$, which holds because $\aleq\comp\aleq$ and
  $\alneq_{\tinyneq}\comp\alneq_{\tinyneq}$ are the universal relation
  $\Q^n\times\Q^n$, compositions $\alneq_{\tinyneq}\comp\aleq$ and
  $\aleq\comp\alneq_{\tinyneq}$ are equal to the relation $\neq$, and
  $\Id$ is an identity element with respect to the relation
  composition.  Since all the relations in $\mathcal{S}$ are
  symmetric, they are equal to their converses. Therefore, all the
  binary relations first-order definable from $\aleq$ using only 3
  variables are in $\mathcal{S}$. This completes the proof of the
  theorem.
\end{proof}

\section{Quantifier complexity}
\label{sec-complexity}

Let us now investigate the quantifier complexity of the possible
definitions. To do so, let us recall that a formula is called
\semph{universal} if{}f it is of form $\forall x_1\ldots \forall x_k
\psi$ for some quantifier free formula $\psi$ and variables
$x_1,\ldots, x_k$. A formula is \semph{existential} in the completely
analogous case when $\forall$ is replaced by $\exists$. A formula is
called \semph{universal-existential} if{}f it is of form $\forall
y_1\ldots \forall y_m \exists x_1\ldots \exists x_k\psi$ for some
quantifier free formula $\psi$ and variables $x_1,\ldots,
x_k,y_1\ldots, y_m$. The definition of an
\semph{existential-universal} formula can be obtained from this by
interchanging the universal and existential quantifiers. In the
superscript, we indicate the number of quantifies, and we use * in
case this number is not specified, \eg by
$\forall^1\exists^1$-formulas we mean universal-existential formulas
containing exactly one universal and one existential quantifiers, and
by $\forall^*\exists^*$-formulas we mean all universal-existential
formulas.

It is straightforward to show that defining $\ttos$, $\stot$ and
$\ltot$ are logically equivalent to $\forall^1\exists^1$-formulas and
$\ttol$, $\stol$ and $\ltos$ are logically equivalent to
$\exists^1\forall^1$-formulas, see Table~\ref{q-table}. So these
formulas are also quite simple in terms of quantifiers.

\begin{table}[!hbt]
  \caption{\label{q-table}}
  \begin{tabular}{|c|c;{1pt/2pt}c|c;{1pt/2pt}c|c;{1pt/2pt}c|}
    \hline defining formula & $\ttos$ & $\ttol$ & $\stot$ & $\stol$
    &$\ltot$ & $\ltos$ \\
    
    \hline complexity & $\forall^1\exists^1$& $\exists^1\forall^1$ &
    $\forall^1\exists^1$ &$\exists^1\forall^1$ & $\forall^1\exists^1$
    & $\exists^1\forall^1$ \\ \hline
  \end{tabular}
\end{table}

It is a natural question whether it is possible to find even simpler
defining formulas in terms of quantifier complexity. Now we are going
to investigate this question looking for existential and universal
definitions. To make our formulas more concise, we are going to use
the following abbreviation for certain binary relations $\aleq$ and
$\bleq$:
\begin{equation*}
  x \aleq\bleq yz \defiff x\aleq y \kaj x\bleq z.
\end{equation*}
We will also use the natural generalization of this abbreviation for
more than two binary relations.

In Theorem~\ref{thm-edef2} below, we are going to show that the
following six variable existential formula $\ettos$ defines spacelike
relatedness $\sleq$ from timelike relatedness $\tleq$ in
$(\Q^n,\tleq)$:
\begin{equation*}
  \ettos(p,q)  \defeq
  \exists r\exists x\exists s \exists
  z(r\tleq\tleq pq \kaj x \tleq \tlneq pq \kaj s \tlneq \tlneq
  pq\kaj z \tlneq \tleq pq\kaj r \tlneq\tleq\tlneq xsz),
\end{equation*}
and hence 
\begin{equation*}
  \uttol(p,q) \defeq \lnot\ettos(p,q) \kaj p\neq q \kaj p\tlneq q  
\end{equation*}
gives a universal definition of lightlike relatedness from $\tleq$.

\begin{theorem}[$n\ge2$; \ref{eq-Eucl} or $n=2$]\label{thm-edef2}
  Assume that $n=2$ or that $(\Q,+,\cdot,\le)$ is a Euclidean field,
  then:
  \begin{enumerate}
  \item\label{e2ttos} Spacelike relatedness $\sleq$ can be defined
    existentially from timelike relatedness $\tleq$ using only 6
    variables by $\exists^4$-formula $\ettos$ in $(\Q^n,\tleq)$.
  \item\label{u2ttol} Lightlike relatedness $\lleq$ can be defined
    universally from timelike relatedness $\tleq$ using only 6
    variables by $\forall^4$-formula $\uttol$ in $(\Q^n,\tleq)$.
  \end{enumerate}
\end{theorem}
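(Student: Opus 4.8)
The plan is to determine the binary relation $R\subseteq\Q^n\times\Q^n$ defined by $\ettos$ and show that it equals $\sleq$. Since $\ettos$ is a first-order formula over $(\Q^n,\tleq)$, the relation $R$ is invariant under all automorphisms of the model, so by Corollary~\ref{cor-atoms} it is a union of some of the atoms $\tleq$, $\sleq$, $\lleq$ and $=$. Because the automorphism group acts transitively on each of these atoms separately (Proposition~\ref{prop-aut}), $R$ either contains a whole atom or is disjoint from it, according to whether $\ettos$ holds on a single representative pair of that type. The proof therefore reduces to testing $\ettos$ on one representative from each of the four atoms, and I would carry out each test in a convenient standard position obtained from Proposition~\ref{prop-aut}.

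The diagonal is immediate: if $\vp=\vq$, then the conjunct $\vx\tleq\tlneq\vp\vq$ asks for a point $\vx$ that is timelike related to $\vp$ but not to $\vq=\vp$, which is impossible, so $\ettos(\vp,\vp)$ is false and no diagonal pair belongs to $R$. For the spacelike atom I would place $\vp,\vq$ in the standard horizontal position and then exhibit explicit witnesses $\vr,\vx,\vs,\vz$; this is the well-visualizable construction announced in the introduction. Here $\vx$, $\vs$ and $\vz$ sample the three regions cut out by the two light cones---inside the light cone of $\vp$ only, inside neither, and inside the light cone of $\vq$ only---while $\vr$ lies inside both light cones and is positioned so that, among $\vx,\vs,\vz$, it is timelike related to $\vs$ alone. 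Producing one such witnessing configuration gives, via Corollary~\ref{cor-aut}, the inclusion $\sleq\;\subseteq R$.

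The crux of the argument, and the step I expect to be the main obstacle, is to show that $\ettos$ fails on a timelike and on a lightlike representative, so that neither $\tleq$ nor $\lleq$ is contained in $R$. Fixing the standard vertical representative of $\tleq$ and a standard representative of $\lleq$, I would assume for contradiction that witnesses $\vr,\vx,\vs,\vz$ exist and derive an incompatibility among the five incidence constraints. The difficulty is that, unlike $\vp$ and $\vq$, the witnesses range over all of $\Q^n$ and cannot be normalized away, so the argument must rest directly on the cone inequalities: the requirement that $\vr$, which is timelike related to both $\vp$ and $\vq$, be timelike related to $\vs$ but not timelike related to $\vx$ or $\vz$ forces incompatible relations among the time-coordinate differences and spatial distances once $\vp$ and $\vq$ are causally related. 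Completing this unsatisfiability check for the two causal representatives, together with the two cases above, yields $R=\sleq$ and proves part~\ref{e2ttos}.

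Part~\ref{u2ttol} then follows formally. Since $\ettos$ is an $\exists^4$-formula defining $\sleq$, its negation $\lnot\ettos$ is a $\forall^4$-formula defining the complement $\slneq=\tleq\cup\lleq\cup{=}$. Conjoining $\vp\neq\vq$ and $\vp\tlneq\vq$ intersects this with $\neq$ and with $\tlneq$, which, by the partition of $\Q^n\times\Q^n$ into $\tleq$, $\sleq$, $\lleq$ and ${=}$, removes the atoms ${=}$ and $\tleq$ from $\slneq$ and leaves exactly $\lleq$. As $\uttol$ merely appends two quantifier-free conjuncts to $\lnot\ettos$ and reuses the same four bound variables, it is a universal formula in six variables, as claimed.
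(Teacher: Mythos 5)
Your overall strategy is the same as the paper's: reduce to one representative pair per atom via Proposition~\ref{prop-aut} and Corollary~\ref{cor-atoms}, exhibit witnesses for a spacelike pair, rule out witnesses for the other three atoms, and deduce part~\ref{u2ttol} formally from part~\ref{e2ttos} and the partition of $\Q^n\times\Q^n$ into $\tleq$, $\lleq$, $\sleq$ and $=$. The diagonal case, the spacelike witness construction, and the derivation of the universal definition are all fine.

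The genuine gap is that the decisive step --- showing $\ettos$ is unsatisfiable on a timelike and on a lightlike pair --- is only announced, not carried out. You correctly identify it as the crux, but then assert that the constraints ``force incompatible relations among the time-coordinate differences and spatial distances'' without saying how, and your framing in terms of coordinate differences and spatial distances points in the wrong direction: the actual obstruction is purely order-theoretic, resting on transitivity of the timelike-future and timelike-past relations. Concretely, when $\vq$ is in the timelike future of $\vp$, the set of $\vr$ with $\vr\tleq\tleq\vp\vq$ splits into three regions (timelike past of both, timelike future of $\vp$ and past of $\vq$, timelike future of both), and each choice kills a different witness: if $\vr$ is in the past of both, then any $\vx$ with $\vx\tleq\tlneq\vp\vq$ must lie in the timelike future of $\vp$ (else transitivity would give $\vx\tleq\vq$), hence in the timelike future of $\vr$, contradicting $\vr\tlneq\vx$; symmetrically the future region clashes with $\vz$; and if $\vr$ is in the middle region, any $\vs$ with $\vr\tleq\vs$ is in the timelike future of $\vr$ (hence of $\vp$) or the timelike past of $\vr$ (hence of $\vq$), contradicting $\vs\tlneq\tlneq\vp\vq$. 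The lightlike case needs the variant fact that the timelike past of a point in the causal past of $\vq$ lies in the timelike past of $\vq$. Without this case analysis the proof of part~\ref{e2ttos}, and hence of the whole theorem, is incomplete.
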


\begin{proof}
  First we are going to show that $\vp\sleq\vq$ holds exactly if
  $\ettos(\vp, \vq)$ holds. If $\vp\sleq\vq$, then we can assume,
  without loosing generality, that $\vp$ and $\vq$ are horizontally
  related, \cf Proposition~\ref{prop-aut}. In this case, it is easy to
  verify that there are (even coplanar) points
  $\vr,\vx,\vs,\vz\in\Q^n$ for which relations $\vr\tleq\tleq \vp\vq$,
  $\vx \tleq \tlneq \vp\vq$, $\vs \tlneq \tlneq \vp\vq$, $\vz \tlneq
  \tleq \vp\vq$, $\vr\tleq \vs$ and $\vr \tlneq\tlneq \vx\vz$ hold;
  and hence $\ettos(\vp, \vq)$ also holds, see Figure~\ref{fig-ettos},
  where the regions are colored and labeled based on how the points
  from there are respectively related to $\vp$ and $\vq$.  So,
  according to Figure~\ref{fig-ettos}, $\vr\tleq\tleq \vp\vq$ holds
  if{}f $\vr$ is in a brown $\tleq\tleq$ region, $\vx \tleq \tlneq
  \vp\vq$ holds if{}f $\vx$ is in a blue $\tleq \tlneq$ region, $\vs
  \tlneq \tlneq \vp\vq$ holds if{}f $\vs$ is in a red $\tlneq\tlneq$
  region, and $\vz \tlneq \tleq \vp\vq$ holds if{}f $\vz$ in a green
  $\tlneq\tleq$ region.

  \begin{figure}[!htb]
    \centering
    \begin{tikzpicture}[scale=.5,rotate=45]

\tikzstyle{ll}=[red]
\tikzstyle{TS}=[fill=blue!30,draw=blue!30]
\tikzstyle{SS}=[fill=red!30,draw=red!30]
\tikzstyle{ST}=[fill=green!20,draw=green!30]
\tikzstyle{TT}=[fill=brown!30,draw=brown!30]
\tikzstyle{pont}=[circle, inner sep=1]

\begin{scope}[]
\draw[SS] (2,2) rectangle node{$\tlneq\tlneq$} (4,4);
\draw[TT] (0,0) rectangle node{$\tleq\tleq$} (2,2);
\draw[TT] (4,4) rectangle node{$\tleq\tleq$} (6,6);
\draw[TS] (0,2) rectangle node{$\tleq\tlneq$} (2,4);
\draw[SS] (0,4) rectangle node{$\tlneq \tlneq$} (2,6);
\draw[ST] (2,0) rectangle node{$\tlneq \tleq$} (4,2);
\draw[ST] (4,2) rectangle node{$\tlneq \tleq$} (6,4);
\draw[TS] (2,4) rectangle node{$\tleq\tlneq$} (4,6);
\draw[SS] (4,0) rectangle node{$\tlneq\tlneq$} (6,2);
\draw[blue,thick] (4,4) to (4,6);
\draw[blue,thick] (0,2) to (2,2);
\draw[green,thick] (2,0) to (2,2);
\draw[green,thick] (4,4) to (6,4);
\draw[red!80,thick] (0,4) to (4,4) to (4,0) ;
\draw[red!80,thick] (2,6) to (2,2) to (6,2) ;

\draw[fill] (2,4) circle [radius=.07] node[below] {$\vp$};
\draw[fill] (4,2) circle [radius=.07] node[below] {$\vq$};

\draw[fill] (2.5,2.5) circle [radius=.04] node[below] {$\vs$};
\draw[fill] (1.3,1.3) circle [radius=.04] node[above] {$\vr$};
\draw[fill] (2.5,.1) circle [radius=.04] node[below right] {$\vz$};
\draw[fill] (0.1,2.5) circle [radius=.04] node[below left] {$\vx$};
\node at (3.5,6.5) {\huge\checkmark};

\end{scope}

\begin{scope}[shift={(8,-8)}]
\draw[TT] (2,2) rectangle node{$\tleq\tleq$} (4,4);
\draw[TT] (0,0) rectangle node{$\tleq\tleq$} (2,2);
\draw[TT] (4,4) rectangle node{$\tleq\tleq$} (6,6);
\draw[ST] (0,2) rectangle node{$\tlneq \tleq$} (2,4);
\draw[SS] (0,4) rectangle node{$\tlneq \tlneq$} (2,6);
\draw[ST] (2,0) rectangle node{$\tlneq \tleq$} (4,2);
\draw[TS] (4,2) rectangle node{$\tleq\tlneq$} (6,4);
\draw[TS] (2,4) rectangle node{$\tleq\tlneq$} (4,6);
\draw[SS] (4,0) rectangle node{$\tlneq\tlneq$} (6,2);
\draw[blue,thick] (2,4) to (6,4);
\draw[blue,thick] (4,2) to (4,6);
\draw[green,thick] (2,0) to (2,4);
\draw[green,thick] (0,2) to (4,2);
\draw[red!80,thick] (0,4) to (2,4) to (2,6);
\draw[red!80,thick] (6,2) to (4,2) to (4,0);

\draw[fill] (2,2) circle [radius=.07] node[below] {$\vp$};
\draw[fill] (4,4) circle [radius=.07] node[below] {$\vq$};

\node[cross out,draw,red,inner sep=0,very thick] at (7,3) {\textcolor{black}{\huge\checkmark}};
\end{scope}

\begin{scope}[shift={(8,0)}]
\draw[TT] (0,0) rectangle node{$\tleq\tleq$} (2,3);
\draw[SS] (0,3) rectangle node{$\tlneq \tlneq$} (2,6);
\draw[ST] (2,0) rectangle node{$\tlneq \tleq$} (4,3);
\draw[TT] (4,3) rectangle node{$\tleq\tleq$} (6,6);
\draw[TS] (2,3) rectangle node{$\tleq\tlneq$} (4,6);
\draw[SS] (4,0) rectangle node{$\tlneq\tlneq$} (6,3);
\draw[red!80,thick] (2,3) to (2,6);
\draw[red!80,thick] (4,3) to (4,0);
\draw[red!80,thick] (0,3) to (6,3);
\draw[blue,thick] (4,3) to (4,6);
\draw[green,thick] (2,0) to (2,3);

\draw[fill] (2,3) circle [radius=.07] node[below] {$\vp$};
\draw[fill] (4,3) circle [radius=.07] node[below] {$\vq$};

\node[cross out,draw,red,inner sep=0,very thick] at (7,3) {\textcolor{black}{\huge\checkmark}};
\end{scope}

\begin{scope}[shift={(4,-4)}]
\draw[dashed, red] (0,-2) to (0,2); 
\draw[dashed, red] (-2,0) to (2,0); 
\node[pont,TT] at (0,0) {$r$};
\node[pont,SS] at (1,1) {$s$};
\node[pont,TS] at (-1,1) {$x$};
\node[pont,ST] at (1,-1) {$z$};
\end{scope}

\end{tikzpicture}
    \caption{Here, we illustrate the proof of that $\ettos(\vp,\vq)$
      holds exactly if $\vp\sleq \vq$ holds.
      \label{fig-ettos}}
  \end{figure}

  To prove the other direction, let us assume that $\vp\sleq\vq$ does
  not hold and prove that $\ettos(\vp, \vq)$ does not hold. Then there
  are three cases to consider: either $\vp=\vq$ or $\vp\lleq \vq$ or
  $\vp\tleq \vq$.

  If $\vp\tleq \vq$, then, without loosing generality, we can assume
  that $\vq$ is in the timelike future of $\vp$ and that $\vp$ and
  $\vq$ are vertically related by Proposition~\ref{prop-aut}. In this
  case, we are going to show that there are no appropriate points
  $\vr,\vx,\vs,\vz\in\Q^n$. Even though Figure~\ref{fig-ettos} is
  two-dimensional, it also illustrates the higher-dimensional cases
  well because of rotational symmetry with respect to the
  time-axis. So we are going to use Figure~\ref{fig-ettos} to refer
  the regions having appropriate relations with respect to $\vp$ and
  $\vq$.

  Relation $\ettos(\vp,\vq)$ requires $\vr$ to be in one of the three
  brown $\tleq\tleq$ regions. Now we are going to show that choosing
  $\vr$ from any of these three regions leads to contradiction.
  \begin{itemize}
    \item If $\vr$ is in the bottom brown $\tleq\tleq$ region, then
      there is no $\vx$ from a blue $\tleq \tlneq$ region, which is
      $\tlneq$-related to $\vr$. This is so because of the followings.
      Such an $\vx$ should be in the timelike future of $\vp$ because,
      if it was in the timelike past of $\vp$, then (by transitivity)
      it would also be in the timelike past of $\vq$ contradicting
      that $\vx\tlneq\vq$. Thus, since $\vp$ is in the timelike future
      of $\vr$, by the transitivity, $\vx$ should also be in the
      timelike future of $\vr$; and hence relation $\vr\tlneq\vx$
      cannot hold.
    \item If $\vr$ is in the middle brown $\tleq\tleq$ region, then
      there is no $\vs$ from a red $\tlneq \tlneq$ region which is
      $\tleq$-related to $\vr$. This is so because, if $\vs$ is
      timelike related to $\vr$, then it is also timelike related to
      $\vp$ (if $\vs$ is in the future of $\vr$) or timelike related
      to $\vq$ (if $\vs$ is in the past of $\vr$). Thus $\vs$ cannot
      be $\tlneq$-related to both $\vp$ and $\vq$; and hence it cannot
      be from a red $\tlneq \tlneq$ region.
    \item If $\vr$ is in the upper brown $\tleq\tleq$ region, then
      there is no $\vz$ from a green $\tlneq \tleq$ region, which is
      $\tlneq$-related to $\vr$ for a completely analogous reason as
      in the first case.
   \end{itemize}
  Hence there is no appropriate $\vr$ required by formula
  $\ettos$. Therefore, $\ettos(\vp,\vq)$ cannot hold if $\vp$ and
  $\vq$ are timelike related.

  If $\vp\lleq \vq$, then basically the same argument works but it is
  simpler because there is no middle brown $\tleq\tleq$ region, \cf
  Figure~\ref{fig-ettos}.  Here, instead of the transitivity of the
  timelike past and future relations, we should use the fact that the
  timelike past of points from the causal past of point $\vq$ is in
  the timelike past of $\vq$, and the analogous fact that we get from
  this one if we replace past with future and $\vq$ with $\vp$. If
  $\vp=\vq$, then the situation is even simpler because, then there
  are no green $\tleq \tlneq$ and blue $\tlneq \tleq$ regions at
  all. So, if $\vp\sleq\vq$ does not hold, then $\ettos(\vp,\vq)$ does
  not hold either. This completes the proof of this direction and
  hence the proof of Item~\eqref{e2ttos}.

  Item \eqref{u2ttol} follows from Item~\eqref{e2ttos} and the fact
  that exactly one of relations $\vp\tleq\vq$, $\vp\lleq\vq$,
  $\vp\sleq\vq$, and $\vp=\vq$ holds for every $\vp,\vq\in\Q^n$.
\end{proof}

Because map $\alpha: (\nt,\nx)\mapsto (\nx,\nt)$ is an isomorphism
between structures $(\Q^2,\tleq,\sleq)$ and $(\Q^2,\sleq,\tleq)$,
existential formula
\begin{equation*}
  \estot(p,q) \defeq \exists r\exists x\exists s \exists z(r\sleq\sleq
  pq \kaj x \sleq \slneq pq \kaj s \slneq \slneq pq\kaj z \slneq \sleq
  pq\kaj r\slneq\sleq\slneq xsz)
\end{equation*}
defines timelike relatedness $\tleq$ from spacelike relatedness
$\sleq$ in $(\Q^2,\sleq)$, and hence,
\begin{equation*}
  \ustol(p,q) \defeq \lnot\estot(p,q) \kaj p\neq q \kaj p\slneq q  
\end{equation*}
gives a universal definition of lightlike relatedness from $\sleq$ in
$(\Q^2,\sleq)$. In other words, following is an immediate corollary of
Theorem~\ref{thm-edef2}.

\begin{corollary}[$n=2$]\label{cor-e2stot}
  Let $(\Q,+,\cdot,\le)$ be an arbitrary ordered field, then:
  \begin{enumerate}
  \item\label{e2stot} Timelike relatedness $\tleq$ can be defined
    existentially from spacelike relatedness $\sleq$ using only 6
    variables by $\exists^4$-formula $\estot$ in $(\Q^2,\sleq)$.
  \item\label{u2stol} Lightlike relatedness $\lleq$ can be defined
    universally from spacelike relatedness $\sleq$ using only 6
    variables by $\forall^4$-formula $\ustol$  in $(\Q^2,\sleq)$. \qed
  \end{enumerate}
\end{corollary}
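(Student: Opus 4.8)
The plan is to obtain the corollary as a direct transport of Theorem~\ref{thm-edef2} along the coordinate-swapping map $\alpha\colon(\nt,\nx)\mapsto(\nx,\nt)$, exploiting that in dimension $n=2$ the time and space axes play symmetric roles. First I would record the elementary fact, already noted before the statement, that $\alpha$ is an isomorphism interchanging $\tleq$ and $\sleq$: reading off the defining inequalities, $\vp\sleq\vq$ holds if{}f $(\np_0-\nq_0)^2<(\np_1-\nq_1)^2$, which is exactly the condition $\alpha\vp\tleq\alpha\vq$, and symmetrically $\vp\tleq\vq\iff\alpha\vp\sleq\alpha\vq$. In particular $\alpha$ is an isomorphism from $(\Q^2,\sleq)$ onto $(\Q^2,\tleq)$.

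The second step is to notice that $\estot$ is literally obtained from $\ettos$ by uniformly replacing $\tleq$ with $\sleq$ (and hence $\tlneq$ with $\slneq$, as the latter abbreviates the negated atom). Thus both are the same one-relation first-order schema, evaluated with the distinguished symbol read once as $\tleq$ and once as $\sleq$. Invariance of first-order truth under the isomorphism $\alpha$ then yields, for all $\vp,\vq\in\Q^2$,
\[
  (\Q^2,\sleq)\models\estot(\vp,\vq)\iff(\Q^2,\tleq)\models\ettos(\alpha\vp,\alpha\vq).
\]

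To finish item~\eqref{e2stot}, I would apply Theorem~\ref{thm-edef2}\eqref{e2ttos} at $n=2$ to the right-hand side: it holds if{}f $\alpha\vp\sleq\alpha\vq$, and by the first step this is equivalent to $\vp\tleq\vq$. Chaining the equivalences gives $(\Q^2,\sleq)\models\estot(\vp,\vq)\iff\vp\tleq\vq$, as required. Item~\eqref{u2stol} then follows from item~\eqref{e2stot} exactly as item~\eqref{u2ttol} followed from item~\eqref{e2ttos} in Theorem~\ref{thm-edef2}: since exactly one of $\vp\tleq\vq$, $\vp\lleq\vq$, $\vp\sleq\vq$, $\vp=\vq$ holds, the formula $\ustol(\vp,\vq)=\lnot\estot(\vp,\vq)\kaj\vp\neq\vq\kaj\vp\slneq\vq$ isolates precisely the lightlike case.

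I do not expect any real obstacle here; the content is entirely inherited from Theorem~\ref{thm-edef2}. The only point requiring care is bookkeeping the direction of $\alpha$ — that it sends $\sleq$ to $\tleq$ and not the reverse — so that the two uses of the coordinate swap (rewriting $\estot$ as $\ettos$ via isomorphism invariance, and converting $\alpha\vp\sleq\alpha\vq$ back to $\vp\tleq\vq$) compose in the correct direction. Note finally that the argument is genuinely special to $n=2$: the axis-swap $\alpha$ realizes a symmetry between $\tleq$ and $\sleq$ that is unavailable when $n>2$.
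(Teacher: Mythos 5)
Your proposal is correct and is essentially the paper's own argument: the paper derives the corollary in exactly this way, by noting in the sentence preceding the statement that $\alpha:(\nt,\nx)\mapsto(\nx,\nt)$ is an isomorphism between $(\Q^2,\tleq,\sleq)$ and $(\Q^2,\sleq,\tleq)$ and transporting Theorem~\ref{thm-edef2} along it. You have merely written out the isomorphism-invariance bookkeeping that the paper leaves implicit, and your direction-checking of $\alpha$ is accurate.
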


\begin{remark}[$n\ge3$]
The assumption $n=2$ cannot be omitted from
Corollary~\ref{cor-e2stot}, \ie $\exists^4$-formula $\estot$ does not
define $\tleq$ in $(\Q^n,\sleq)$ if $n\ge3$. Spacelike related points
$\vp$ and $\vq$ satisfying $\estot(p,q)$ can easily be found searching
them in horizontal slices of $(\Q^n,\sleq)$ looked from above, \cf
Figure~\ref{fig-estot}. That $\estot$ can also be satisfied by
lightlike related points in $(\Q^n,\sleq)$ if $n\ge3$ can be shown by
checking the following example: $\vp=(-2,-2,0)$, $\vs=(0,0,0)$,
$\vq=(2,2,0)$, $\vx=(-2,0,0)$, $\vz=(2,0,0)$ and $\vr=(0,0,1)$. Hence,
$\forall^4$-formula $\ustol$ does not define $\lleq$ if $n\ge 3$
because the relation defined by it does not contain lightlike related
point pair $\vp=(-2,-2,0)$ and $\vq=(2,2,0)$.
  \begin{figure}[!hbt]
    \centering \begin{tikzpicture}[scale=.5]

\tikzstyle{ll}=[red]
\tikzstyle{TS}=[fill=blue!30,draw=blue!30]
\tikzstyle{SS}=[fill=red!30,draw=red!30]
\tikzstyle{ST}=[fill=green!30,draw=green!30]
\tikzstyle{TT}=[fill=brown!30,draw=brown!30]
\tikzstyle{pont}=[circle, inner sep=1]

\begin{scope}[shift={(7,2)}]

\draw[TT]  (-6,-7) rectangle (6,4);

\draw[ST] (-2,0) circle(3);
\draw[TS] (2,0) circle(3);

\begin{scope}
    \clip (-2,0) circle(3cm);
    \clip (2,0) circle(3cm);
\draw[SS] (0,0) circle (3);
\node[below] at (0,0) {$\slneq\slneq$};
\end{scope}

\node at (-4.5,-6) {$\sleq\sleq$};
\node at (-3.5,1.5) {$\slneq\sleq$};
\node at (3.5,1.5) {$\sleq\slneq$};

\draw[dotted] (0,-3) circle(3);

\draw[fill] (-2,0) circle [radius=.07] node[above] {$\vp$};
\draw[fill] (-2,0) circle [radius=.07] node[below left] {\tiny$(0,-2,0)$};

\draw[fill] (2,0) circle [radius=.07] node[above] {$\vq$};
\draw[fill] (2,0) circle [radius=.07] node[below right] {\tiny$(0,2,0)$};

\draw[fill] (0,-3) circle [radius=.07] node[above] {$\vr$};
\draw[fill] (0,-3) circle [radius=.07] node[below] {\tiny$(0,0,-3)$};

\draw[fill] (0,1) circle [radius=.12] node[above] {$\vs$};
\draw[fill] (0,1) circle [radius=.12] node[below] {\tiny$(3,0,1)$};

\draw[fill] (2,-2) circle [radius=.12] node[above] {$\vx$};
\draw[fill] (2,-2) circle [radius=.12] node[below] {\tiny$(3,2,-2)$};

\draw[fill] (-2,-2) circle [radius=.12] node[above] {$\vz$};
\draw[fill] (-2,-2) circle [radius=.12] node[below] {\tiny$(3,-2,-2)$};
\end{scope}

\begin{scope}[shift={(-4,0)}]
\draw[dashed, red] (-1.4,1.4) to (1.4,-1.4); 
\draw[dashed, red] (-1.4,-1.4) to (1.4,1.4); 
\node[pont,TT] at (0,0) {$r$};
\node[pont,SS] at (1.4,0) {$s$};
\draw[dashed,red] (-1.4,1.4) arc [start angle=180,end angle=0, x radius=1.4, y radius=0.1];
\draw[dotted,red] (-1.4,-1.4) arc [start angle=180,end angle=0, x radius=1.4, y radius=0.1];
\node[pont,ST] at (0,-1.4) {$z$};
\node[pont,TS] at (0,1.4) {$x$};
\draw[dashed,red] (-1.4,1.4) arc [start angle=180,end angle=360, x radius=1.4, y radius=0.1];
\draw[dashed,red] (-1.4,-1.4) arc [start angle=180,end angle=360, x radius=1.4, y radius=0.1];
\end{scope}

\end{tikzpicture}
    \caption{This figure illustrates why $\exists^4$-formula $\estot$
      does not define timelike relatedness $\tleq$ in $(\Q^n,\sleq)$
      if $n\ge3$. It shows a horizontal slice of $(\Q^3,\sleq)$ viewed
      from above together with the various regions of this plane
      colored and labeled how the points from there are related to the
      horizontally related points $\vp=(0,-2,0)$ and $\vq=(0,2,0)$
      below this plane. Points $\vx=(3,2,-2)$, $\vs=(3,0,1)$ and
      $\vz=(3,-2,-2)$ are in this horizontal plane, points $\vp$,
      $\vq$ and $\vr=(0,0,-3)$ are below in a parallel plane. Checking
      that these points show that spacelike related points $\vp$ and
      $\vq$ satisfy $\estot(p,q)$ is straightforward both by
      calculation and by the figure using the symmetries of the
      construction. \label{fig-estot}}
  \end{figure}
\end{remark}

Using $p$ and $q$ as free variables, up to logical equivalence, any
$\exists^k$-existential definition of a binary relation from a binary
relation $\aleq$ is of the following form $$\exists z_1\exists
z_2\ldots\exists z_k\; B(p,q,z_1,\ldots,z_k),$$ where $B$ is some
Boolean combination of relations $\aleq$ and $=$ between variables
$z_1,\ldots,z_k$, $p$ and $q$. Because $B$ can be written as a
disjunctive normal form and $\exists z (\varphi \lor \psi)$ is
logically equivalent to $(\exists z \varphi)\lor (\exists z \psi)$,
every such definition is equivalent to a disjunction
$\varphi_1\lor\ldots\lor\varphi_m$ of existential formulas $\varphi_i$
each of which requires the existence of $k$-many points
$z_1,\ldots,z_k$ and some relations of $\aleq$, $\alneq$, $\neq$, and
$=$ between $z_1,\ldots,z_k$, $p$ and $q$.

Without loosing generality, we can assume that none of the
requirements between two variables in $\varphi_i$ is the equality
because, in that case, either the same can be defined by using
less variables or $\varphi(p,q)$ is equivalent to $p=q$. We can
also assume that these requirements are \emph{non-self-contradicting}
(\ie we do not require a direct contradiction between variables
$z_1,\ldots,z_k$, $p$ and $q$, \eg if we require $z_1\aleq p$, then we
do not also require $z_1\alneq p$) because then $\varphi_i$ clearly
defines the empty relation. This means that we can assume that between
any two variables either nothing or one of relations $\aleq$,
$\alneq$, $\neq$, $\aleq_{\tinyneq}$, and $\alneq_{\tinyneq}$ is
required.

So, to understand relations definable by $\exists^k$-existential
formulas, it is enough to understand the relations which are definable
by an $\exists^k$-existential formula of the form $\exists z_1\exists
z_2\ldots\exists z_k\; C(p,q,z_1,\ldots,z_k)$, where
$C(p,q,z_1,\ldots,z_k)$ is a non-self-contradicting conjunction of
relations $\aleq$, $\alneq$, $\neq$ between variables
$z_1,\ldots,z_k$, $p$ and $q$ (since every nonempty definable relation
is the union of ones definable by such formulas). Let us call such
formulas \semph{basic $\exists^k$-formulas}.\footnote{Note that
  everything which is definable by a basic $\exists^k$ formula
  $\psi$ can easily be defined by a basic $\exists^{k+1}$
  formula, \eg $\exists v \psi$ defines the same if variable $v$
    does not occur in $\psi$ at all.}  The relation defined by a
basic $\exists^k$-formula $\exists z_1\exists z_2\ldots\exists z_k\;
C(p,q)\land D(p,q,z_1,\ldots,z_k)$ is the intersection of relation
$C(p,q)$ and the relation defined by basic $\exists^k$-formula
$\exists z_1\exists z_2\ldots\exists z_k\; D(p,q,z_1,\ldots,z_k)$. We
are going to call a basic $\exists^k$-formula \semph{non-requiring} if
it does not require anything between its free variables. So every
relation definable by a basic $\exists^k$-formula can be constructed
using intersections from $\aleq$, $\alneq$, $\neq$ and a relation
definable by a non-requiring basic $\exists^k$-formula.

It is the most difficult to satisfy those basic $\exists^k$-formulas,
which require either $\aleq$ or $\alneq_{\tinyneq}$ between any two of
variables $z_1,\ldots,z_k$, $p$ and $q$. We are going to call such
basic $\exists^k$-formulas \semph{fastidious}. By a non-requiring
fastidious basic $\exists^k$-formula, we mean one that requires
either $\aleq$ or $\alneq_{\tinyneq}$ between any two
  variables except $p$ and $q$, and there is no requirement
  between $p$ and $q$. To gain a picturesque understanding of their
satisfiability, we introduce mappings of some edge-labeled graphs to
$\Q^n$.

Let $G=(V,E)$ be a simple graph whose edges are labeled by some
labeling $l$ that maps the edges of $G$ to a set of symmetric binary
relations on $\Q^n$. By an \semph{embedding} \emph{of $(G,l)$ to
  $\Q^n$}, we understand a one-to-one map $f:V\to\Q^n$ such that, for
all vertices $x,y\in V$, if edge $(x,y)\in E$ is labeled by relation
$\aleq$, then $f(x)\aleq f(y)$ holds. We say that edge-labeled graph
$(G,l)$ is \semph{embeddable} \emph{to $\Q^n$} if there is an
embedding of $(G,l)$ to $\Q^n$.

To every basic $\exists^k$-formula $\varphi$, we can associate an
edge-labeled simple graph whose vertices are the variables of
$\varphi$, two variables $x$ and $y$ are connected by an edge if
$\varphi$ requires something between variables $x$ and $y$, and this
edge $(x,y)$ is labeled by the binary relation that $\varphi$ requires
between $x$ and $y$.  A basic $\exists^k$-formula is satisfiable in
model $(\Q^n,\tleq)$ if{}f the corresponding edge-labeled graph is
embeddable to $\Q^n$.

Let us start with some simple observations about these graph
embeddings.  A restriction of an embedding to a subset $S\subseteq V$
of vertices is an embedding of the subgraph of $G$ induced by $S$. This
gives us the following:
\begin{description}
\item[O1] If an edge-labeled graph is embeddable to $\Q^n$, then so are
  its induced subgraphs.
\end{description}
Since it is more difficult to satisfy stronger requirements, we have:
\begin{description}
\item[O2] If $\aleq\subseteq\bleq$ are symmetric binary relations on
  $\Q^n$ and edge-labeled graph $(G,l)$ is embeddable to $\Q^n$, then
  so is the one which we get from $(G,l)$ by replacing labels $\aleq$
  with $\bleq$.
\end{description}
Since $(\Q^2,\tleq,\lleq,\sleq)$ can be embedded as a substructure to
$(\Q^n,\tleq,\lleq,\sleq)$ for all $n\ge2$, we have that:
\begin{description}
\item[O3] If an edge-labeled graph labeled by relations $\tleq$,
  $\tlneq_{\tinyneq}$, $\lleq$, $\sleq$ is embeddable to $\Q^2$, then
  so is to $\Q^n$ for all $n\ge2$.
\end{description}
Because map $\alpha:(\nt,\nx)\mapsto(\nx,\nt)$ preserves relation
$\lleq$ but interchanges $\tleq$ and $\sleq$ in
$(\Q^2,\tleq,\lleq,\sleq)$, we have that:
\begin{description}
\item[O4] If $(G,l)$ labeled by $\tleq$, $\lleq$ and $\sleq$ is
  embeddable to $\Q^2$, then so is the one that we get from $(G,l)$ by
  interchanging labels $\tleq$ and $\sleq$.
\end{description}

\begin{figure}[!ht]
  \centering
  \input{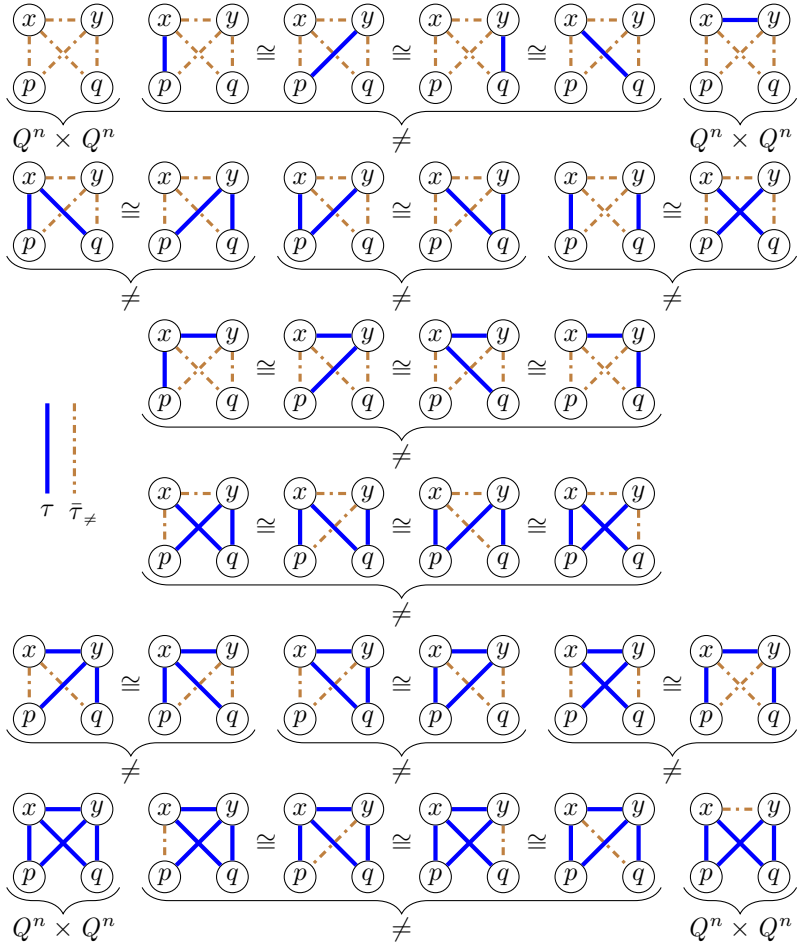}
\caption{The figure illustrates the edge-labeled graphs of all the
  non-requiring fastidious $\exists^2$-formulas and the relations what
  the corresponding formulas define if $n=2$ or $(\Q,+,\cdot,\le)$ is
  a Euclidean field.\label{fig-nrfe2}}
\end{figure}

\begin{figure}[!ht]
  \centering \begin{tikzpicture}[scale=0.9]
\tikzstyle{poento}=[circle, draw, fill=white, inner sep=0,minimum size=10];
\tikzstyle{tl}=[blue,very thick]
\tikzstyle{ll}=[very thick, red, dashed]  
\tikzstyle{ntl}=[very thick, brown, dashdotted] 
\tikzstyle{sl}=[very thick, green!82!black, dotted] 

\begin{scope}[shift={(0,2)}]
\draw[tl] (-1.8,-1.5) to (-0.7,-1.5) node[right] {$\tleq$};
\draw[ntl] (0.2,-1.5) to (1.3,-1.5) node[right] {$\tlneq_{\tinyneq}$};
\draw[ll] (0.2,-2) to (1.3,-2) node[right] {$\lleq$};
\draw[sl] (0.2,-2.5) to (1.3,-2.5) node[right] {$\sleq$};
\end{scope}


\begin{scope}[shift={(6,0)}]
\begin{scope}[shift={(-2.5,0.5)},scale=0.5]
\node[poento] (x) at (0,1.5) {$x$};
\node[poento] (y) at (1.5,1.5) {$y$};
\node[poento] (p) at (0,0) {$p$};
\node[poento] (q) at (1.5,0) {$q$};

\draw[ntl] (x) to (p); 
\draw[ntl] (x) to (q); 
\draw[ntl] (x) to (y); 
\draw[ntl] (y) to (p);
\draw[ntl] (y) to (q); 
\end{scope}

\draw[dotted]  (-3,-1.75) rectangle (3,1.75);

\coordinate (vp) at (0.5,-1) ;
\coordinate (vq) at ([shift={(1.5,1.5)}]vp);
\coordinate (vx) at (-2,-1);
\coordinate (vy) at (-1,-1);

\draw[ll,thin] ([shift={(2,2)}]vy) to (vy);
\draw[ll] (vp) to (vq);
\draw[sl] (vx) to (vq);
\draw[sl] (vx) to (vy);
\draw[sl] (vp) to (vy) to (vq);
\draw[sl, ->] (vq) to ([shift={(0,-0.4)}]vq);
\draw[tl, ->] (vq) to ([shift={(0,0.4)}]vq);

\draw[fill] (vp) circle [radius=.07] node[below right] {$\vp$};
\draw[fill] (vq) circle [radius=.07] node[right] {$\vq$};
\draw[fill] (vx) circle [radius=.07] node[below left] {$\vx$};
\draw[fill] (vy) circle [radius=.07] node[below] {$\vy$};
\end{scope}


\begin{scope}[shift={(0,-3.5)}]
\begin{scope}[shift={(-2.5,0.5)},scale=0.5]
\node[poento] (x) at (0,1.5) {$x$};
\node[poento] (y) at (1.5,1.5) {$y$};
\node[poento] (p) at (0,0) {$p$};
\node[poento] (q) at (1.5,0) {$q$};

\draw[tl] (x) to (p); 
\draw[ntl] (x) to (q); 
\draw[ntl] (x) to (y); 
\draw[ntl] (y) to (p);
\draw[ntl] (y) to (q); 
\end{scope}

\draw[dotted]  (-3,-1.75) rectangle (3,1.75);

\coordinate (vp) at (0,-1) ;
\coordinate (vq) at ([shift={(1.5,1.5)}]vp);
\coordinate (vx) at (0,0);
\coordinate (vy) at (-1.5,-0.5);

\draw[ll,thin] ([shift={(1,1)}]vx) to (vx) to  ([shift={(1,-1)}]vx)   (vx) to  ([shift={(-1,-1)}]vx) ;
\draw[ll] (vp) to (vq);
\draw[tl] (vx) to (vp);
\draw[sl] (vx) to (vq);
\draw[sl] (vp) to (vy) to (vx);
\draw[sl, ->] (vq) to ([shift={(0,-0.4)}]vq);
\draw[tl, ->] (vq) to ([shift={(0,0.4)}]vq);

\draw[fill] (vp) circle [radius=.07] node[below right] {$\vp$};
\draw[fill] (vq) circle [radius=.07] node[right] {$\vq$};
\draw[fill] (vx) circle [radius=.07] node[above left] {$\vx$};
\draw[fill] (vy) circle [radius=.07] node[left] {$\vy$};
\end{scope}


\begin{scope}[shift={(6,-3.5)}]
\begin{scope}[shift={(-2.5,0.5)},scale=0.5]
\node[poento] (x) at (0,1.5) {$x$};
\node[poento] (y) at (1.5,1.5) {$y$};
\node[poento] (p) at (0,0) {$p$};
\node[poento] (q) at (1.5,0) {$q$};

\draw[ntl] (x) to (p); 
\draw[ntl] (x) to (q); 
\draw[tl] (x) to (y); 
\draw[ntl] (y) to (p);
\draw[ntl] (y) to (q); 
\end{scope}

\draw[dotted]  (-3,-1.75) rectangle (3,1.75);

\coordinate (vp) at (1,-1) ;
\coordinate (vq) at ([shift={(1,1)}]vp);
\coordinate (vx) at (-1,0);
\coordinate (vy) at (-1,-1);

\draw[ll,thin] ([shift={(2,2)}]vy) to (vy);
\draw[ll,thin] ([shift={(-2,2)}]vp) to (vp);
\draw[ll] (vp) to (vq);
\draw[sl] (vp) to (vx) to (vq);
\draw[tl] (vx) to (vy);
\draw[sl] (vp) to (vy) to (vq);
\draw[sl, ->] (vq) to ([shift={(0,-0.4)}]vq);
\draw[tl, ->] (vq) to ([shift={(0,0.4)}]vq);

\draw[fill] (vp) circle [radius=.07] node[below right] {$\vp$};
\draw[fill] (vq) circle [radius=.07] node[right] {$\vq$};
\draw[fill] (vx) circle [radius=.07] node[left] {$\vx$};
\draw[fill] (vy) circle [radius=.07] node[below left] {$\vy$};
\end{scope}


\begin{scope}[shift={(0,-7)}]
\begin{scope}[shift={(-2.5,0.5)},scale=0.5]
\node[poento] (x) at (0,1.5) {$x$};
\node[poento] (y) at (1.5,1.5) {$y$};
\node[poento] (p) at (0,0) {$p$};
\node[poento] (q) at (1.5,0) {$q$};

\draw[tl] (x) to (p); 
\draw[tl] (x) to (q); 
\draw[ntl] (x) to (y); 
\draw[ntl] (y) to (p);
\draw[ntl] (y) to (q); 
\end{scope}

\draw[dotted]  (-3,-1.75) rectangle (3,1.75);

\coordinate (vp) at (1,0) ;
\coordinate (vq) at ([shift={(1,1)}]vp);
\coordinate (vx) at ([shift={(0,-1)}]vp);
\coordinate (vy) at (-1.5,-1);

\draw[ll,thin] ([shift={(2,2)}]vy) to (vy);
\draw[ll,thin] ([shift={(-1,1)}]vp) to (vp);
\draw[ll,thin] ([shift={(-1.5,1.5)}]vx) to (vx) to  ([shift={(1,1)}]vx);
\draw[ll] (vp) to (vq);
\draw[tl] (vp) to (vx) to (vq);
\draw[sl] (vx) to (vy);
\draw[sl] (vp) to (vy) to (vq);
\draw[sl, ->] (vq) to ([shift={(0,-0.4)}]vq);
\draw[tl, ->] (vq) to ([shift={(0,0.4)}]vq);

\draw[fill] (vp) circle [radius=.07] node[right,yshift=-2] {$\vp$};
\draw[fill] (vq) circle [radius=.07] node[right] {$\vq$};
\draw[fill] (vx) circle [radius=.07] node[below right] {$\vx$};
\draw[fill] (vy) circle [radius=.07] node[below left] {$\vy$};
\end{scope}


\begin{scope}[shift={(6,-7)}]
\begin{scope}[shift={(-2.5,0.5)},scale=0.5]
\node[poento] (x) at (0,1.5) {$x$};
\node[poento] (y) at (1.5,1.5) {$y$};
\node[poento] (p) at (0,0) {$p$};
\node[poento] (q) at (1.5,0) {$q$};

\draw[tl] (x) to (p); 
\draw[ntl] (x) to (q); 
\draw[ntl] (x) to (y); 
\draw[tl] (y) to (p);
\draw[ntl] (y) to (q); 
\end{scope}

\draw[dotted]  (-3,-1.75) rectangle (3,1.75);

\coordinate (vp) at (0,-1) ;
\coordinate (vq) at ([shift={(2,2)}]vp);
\coordinate (vx) at (0.5,1);
\coordinate (vy) at (-0.5,1);

\draw[ll,thin] ([shift={(-1,1)}]vp) to (vp);
\draw[ll] (vp) to (vq);
\draw[tl] (vx) to (vp) to (vy);
\draw[sl] (vy) to (vq);
\draw[sl, ->] (vq) to ([shift={(0,-0.4)}]vq);
\draw[tl, ->] (vq) to ([shift={(0,0.4)}]vq);

\draw[fill] (vp) circle [radius=.07] node[below right] {$\vp$};
\draw[fill] (vq) circle [radius=.07] node[right] {$\vq$};
\draw[fill] (vx) circle [radius=.07] node[above] {$\vx$};
\draw[fill] (vy) circle [radius=.07] node[above left] {$\vy$};
\end{scope}


\begin{scope}[shift={(0,-10.5)}]
\begin{scope}[shift={(-2.5,0.5)},scale=0.5]
\node[poento] (x) at (0,1.5) {$x$};
\node[poento] (y) at (1.5,1.5) {$y$};
\node[poento] (p) at (0,0) {$p$};
\node[poento] (q) at (1.5,0) {$q$};

\draw[tl] (x) to (p); 
\draw[ntl] (x) to (q); 
\draw[ntl] (x) to (y); 
\draw[ntl] (y) to (p);
\draw[tl] (y) to (q); 
\end{scope}

\draw[dotted]  (-3,-1.75) rectangle (3,1.75);

\coordinate (vp) at (0,-1) ;
\coordinate (vq) at ([shift={(2,2)}]vp);
\coordinate (vx) at ([shift={(0,1)}]vp);
\coordinate (vy) at ([shift={(-0,-1)}]vq);

\draw[ll] (vp) to (vq);
\draw[tl] (vx) to (vp) ([shift={(0,-0.4)}]vq) to (vy);
\draw[sl] (vp) to (vy) to (vx) to (vq);
\draw[sl, ->] (vq) to ([shift={(0,-0.4)}]vq);
\draw[tl, ->] (vq) to ([shift={(0,0.4)}]vq);

\draw[fill] (vp) circle [radius=.07] node[below left] {$\vp$};
\draw[fill] (vq) circle [radius=.07] node[right] {$\vq$};
\draw[fill] (vx) circle [radius=.07] node[above left] {$\vx$};
\draw[fill] (vy) circle [radius=.07] node[below right] {$\vy$};
\end{scope}


\begin{scope}[shift={(6,-10.5)}]
\begin{scope}[shift={(-2.5,0.5)},scale=0.5]
\node[poento] (x) at (0,1.5) {$x$};
\node[poento] (y) at (1.5,1.5) {$y$};
\node[poento] (p) at (0,0) {$p$};
\node[poento] (q) at (1.5,0) {$q$};

\draw[tl] (x) to (p); 
\draw[ntl] (x) to (q); 
\draw[tl] (x) to (y); 
\draw[ntl] (y) to (p);
\draw[ntl] (y) to (q); 
\end{scope}

\draw[dotted]  (-3,-1.75) rectangle (3,1.75);

\coordinate (vp) at (0,-1) ;
\coordinate (vq) at ([shift={(2,2)}]vp);
\coordinate (vx) at ([shift={(-.5,2)}]vp);
\coordinate (vy) at ([shift={(-0.5,-2)}]vx);

\draw[ll,thin] ([shift={(-1,1)}]vp) to (vp);
\draw[ll] (vp) to (vq);
\draw[tl] (vy) to (vx) to (vp);
\draw[sl] (vp) to (vy) to (vq) to (vx);
\draw[sl, ->] (vq) to ([shift={(0,-0.4)}]vq);
\draw[tl, ->] (vq) to ([shift={(0,0.4)}]vq);

\draw[fill] (vp) circle [radius=.07] node[below right] {$\vp$};
\draw[fill] (vq) circle [radius=.07] node[right] {$\vq$};
\draw[fill] (vx) circle [radius=.07] node[above left] {$\vx$};
\draw[fill] (vy) circle [radius=.07] node[below left] {$\vy$};
\end{scope}

\end{tikzpicture}
\caption{The figure illustrates how one can satisfy any basic
  non-requiring fastidious $\exists^2$-formula $\varphi$ in
  $(\Q^2,\tleq)$ mapping its free variables to arbitrary two distinct
  points.\label{fig-Embeddings}}
\end{figure}
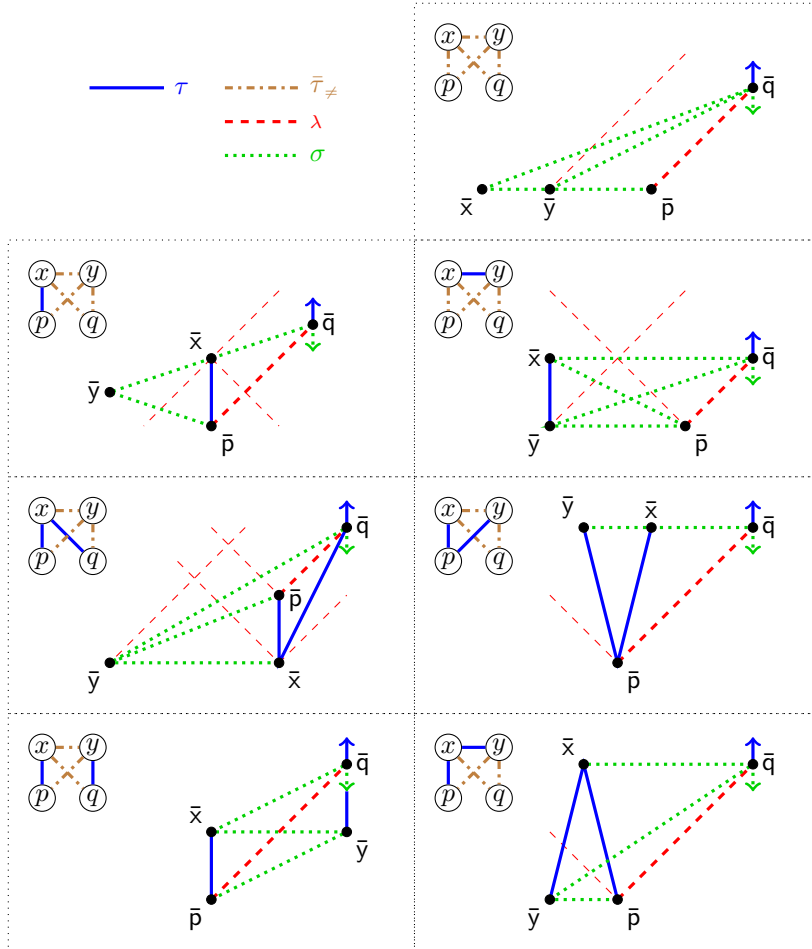

Now using these observations, we are going to show some properties of
relations definable from $\tleq$ or $\sleq$ by a non-requiring
$\exists^2$-formula.
\begin{lemma}[$n\ge2$]\label{lem-nre2}
  Let $(\Q,+,\cdot,\le)$ be an arbitrary ordered field, then:
\begin{enumerate}
  \item\label{l1} Every relation defined by a non-requiring basic
    $\exists^2$-formula contains timelike, spacelike and
    lightlike related pairs of points both in $(\Q^n,\tleq)$ and
    $(\Q^n,\sleq)$.
  \item\label{l2} If $n=2$ or
  $(\Q,+,\cdot,\le)$ is a Euclidean field, then every non-requiring
  basic $\exists^2$-formula defines either $\neq$ or the universal
  relation $\Q^n\times\Q^n$ both in $(\Q^n,\tleq)$ and $(\Q^n,\sleq)$.
\end{enumerate}
\end{lemma}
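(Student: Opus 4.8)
The plan is to derive Item~\eqref{l2} cheaply from Item~\eqref{l1} together with Corollary~\ref{cor-atoms}, and to prove Item~\eqref{l1} by first reducing an arbitrary non-requiring basic $\exists^2$-formula to a fastidious one and then checking the finitely many fastidious graphs by hand, exactly as prepared by Figures~\ref{fig-nrfe2} and~\ref{fig-Embeddings}.

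For Item~\eqref{l1}, let $\varphi$ be a non-requiring basic $\exists^2$-formula over $\tleq$ with free variables $p,q$ and bound variables $x,y$. First I would pass to a fastidious strengthening $\varphi'$ of its edge-labeled graph: keep the labels $\tleq$ and $\tlneq_{\tinyneq}$, refine each $\tlneq$ to $\tlneq_{\tinyneq}$, refine each $\neq$ to $\tlneq_{\tinyneq}$, and put $\tlneq_{\tinyneq}$ on every edge among $\{p,q,x,y\}$ other than $pq$ on which $\varphi$ requires nothing. Each new label is a subset of the one it replaces (an absent edge standing for the universal relation $\Q^n\times\Q^n$), so by observation~O2 applied edge by edge, any embedding of $\varphi'$ is also an embedding of $\varphi$; hence it suffices to embed the non-requiring fastidious graph $\varphi'$, which is still non-requiring since we never touch the pair $pq$.

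The key step is then the claim behind Figure~\ref{fig-Embeddings}: every non-requiring fastidious $\exists^2$-formula is satisfiable in $(\Q^2,\tleq)$ with its free variables $p,q$ sent to an arbitrary pair of distinct points. Up to the symmetries $x\leftrightarrow y$ and $p\leftrightarrow q$ there are only finitely many such graphs---each of the five edges $px,py,qx,qy,xy$ carries one of the two labels $\tleq$, $\tlneq_{\tinyneq}$---and for each I would exhibit points $\vx,\vy$ as in Figure~\ref{fig-Embeddings}. The only geometric inputs needed, all available over an arbitrary ordered field already inside $\Q^2$, are: (a) any two points have a common point timelike related to both (take one with large enough time coordinate); (b) given a point and a second point distinct from it, there is a point timelike related to the first but not the second (take a point just inside the timelike cone of the first, near its light cone, on the side away from the second); and (c) any two points have a point timelike related to neither (take one far off horizontally). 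Having embedded $\varphi'$ in $(\Q^2,\tleq)$ with $\vp,\vq$ prescribed, observation~O3 lifts the embedding to $(\Q^n,\tleq)$ for every $n\ge2$ while preserving the type of the pair $(\vp,\vq)$, since the substructure embedding $\Q^2\hookrightarrow\Q^n$ preserves $\tleq,\lleq,\sleq$ and $=$. Taking $\vp,\vq$ timelike, then lightlike, then spacelike related shows that the relation defined by $\varphi$ in $(\Q^n,\tleq)$ meets each of $\tleq,\lleq,\sleq$. The $(\Q^n,\sleq)$ half follows the same way: the coordinate swap $\alpha\colon(\nt,\nx)\mapsto(\nx,\nt)$ underlying observation~O4 is an isomorphism of $(\Q^2,\tleq,\lleq,\sleq)$ interchanging $\tleq$ and $\sleq$, hence also $\tlneq_{\tinyneq}$ and $\slneq_{\tinyneq}$, so the fastidious $\sleq$-graphs embed in $(\Q^2,\sleq)$, O3 lifts to $(\Q^n,\sleq)$, and since $\alpha$ preserves lightlike pairs and interchanges timelike and spacelike ones, all three types are again met.

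For Item~\eqref{l2}, assume $n=2$ or that $(\Q,+,\cdot,\le)$ is Euclidean, and let $R$ be the relation defined by a non-requiring basic $\exists^2$-formula in $(\Q^n,\tleq)$. Being first-order definable from $\tleq$, $R$ is closed under the automorphisms of $(\Q^n,\tleq,\lleq,\sleq)$, so by Corollary~\ref{cor-atoms} it is a union of some of the atoms $\tleq,\sleq,\lleq,=$. By Item~\eqref{l1}, $R$ contains a timelike, a spacelike and a lightlike related pair, and as the automorphism group acts transitively on each of these by Proposition~\ref{prop-aut}, we get $\tleq\cup\sleq\cup\lleq\subseteq R$. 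Since $\tleq\cup\sleq\cup\lleq=\neq$ and $R$ is a union of atoms, $R$ equals $\neq$ if $=\not\subseteq R$ and $\Q^n\times\Q^n$ if $=\subseteq R$. The same argument run in $(\Q^n,\sleq)$, now invoking Corollary~\ref{cor-atoms} for $\sleq$, yields the statement there. The conceptual skeleton---the O2 strengthening and the atom argument via Corollary~\ref{cor-atoms}---is short, so the hard part, and really the only obstacle, is the finite but somewhat tedious case analysis establishing the embedding claim: one must verify for each non-requiring fastidious $\exists^2$-graph that it embeds with $\vp,\vq$ arbitrary, which is precisely the bookkeeping organized in Figures~\ref{fig-nrfe2} and~\ref{fig-Embeddings}.
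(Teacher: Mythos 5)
Your proposal is correct and follows essentially the same route as the paper: reduce a non-requiring basic $\exists^2$-formula to a fastidious strengthening via O2, verify the finitely many fastidious graphs by the embeddings of Figure~\ref{fig-Embeddings}, lift with O3, transfer to $(\Q^n,\sleq)$ by the coordinate-swap isomorphism (the paper's O4), and derive Item~\eqref{l2} from Item~\eqref{l1} via the atom structure of Corollary~\ref{cor-aut}/\ref{cor-atoms}. The only cosmetic difference is that the paper uses O4 to halve the case analysis (deducing the graphs with many $\tleq$-edges from those with few), whereas you propose checking all cases directly; both come down to the same finite verification.
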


\begin{proof}
  The proofs of both items in the cases $(\Q^n,\tleq)$ and
  $(\Q^n,\sleq)$ are completely analogous by symmetries and because
  every construction used in the proof is two-dimensional. Therefore,
  we only show the statement in $(\Q^n,\tleq)$.  It is easy to check that
  the non-requiring fastidious basic $\exists^2$-formulas correspond
  exactly to the edge-labeled graphs listed in
  Figure~\ref{fig-nrfe2}. 

  Figure~\ref{fig-Embeddings} shows how the edge-labeled graphs
  corresponding to non-requiring fastidious basic $\exists^2$-formulas
  can be embedded to $\Q^2$ such that the $\tlneq_{\tinyneq}$ edges
  are mapped to spacelike related points and $p$ and $q$ are mapped to
  lightlike related if these formulas have less than three
  $\tleq$-requirements.  The constructions illustrated by
  Figure~\ref{fig-Embeddings} can easily be modified by
  rising/lowering $\vq$ a little bit to turn the relation between
  $\vp$ and $\vq$ into timelike/spacelike one without changing the
  timelike and spacelike relatedness between any other pair of points
  from $\vx$, $\vy$, $\vp$ and $\vq$.  In other words, we get an
  edge-labeled graph embeddable to $\Q^2$ from any of the graphs from
  the top half of Figure~\ref{fig-nrfe2} even if we strengthen the
  $\tlneq_{\tinyneq}$-labels to $\sleq$ and connect the vertices $p$
  and $q$ labeling them by any of relations $\tleq$, $\sleq$ and
  $\lleq$. So by observation O4, the same holds for the edge-labeled
  graphs from the bottom half of Figure~\ref{fig-nrfe2}.  By O3, this
  implies the statement of Item~\ref{l1} for relations definable by a
  non-requiring fastidious basic $\exists^2$-formula. From this,
  Item~\ref{l1} of the lemma follows because a non-requiring basic
  $\exists^2$-formula can only define something larger than its
  non-requiring fastidious strengthenings.

  Now Item~\ref{l2} follows from Item~\ref{l1} by
  Corollary~\ref{cor-aut} and the fact that no relation between $\neq$
  and $\Q^n\times\Q^n$ is definable from $\tleq$ (as no relation
  between $\emptyset$ and $=$ is definable from $\tleq$).
\end{proof}

\begin{theorem}[$n\ge2$]\label{cor-nre2}
  Let $(\Q,+,\cdot,\le)$ be an arbitrary ordered field, then:
  \begin{enumerate}
  \item Neither spacelike nor lightlike relatedness can be defined
    existentially or universally using only 4 variables (\ie by an
    $\exists^2$-formula or a $\forall^2$-formula) in $(\Q^n,\tleq)$.
  \item Neither timelike nor lightlike relatedness can be defined
    existentially or universally using only 4 variables (\ie by an
    $\exists^2$-formula or a $\forall^2$-formula) in $(\Q^n,\sleq)$.
    \qed
  \end{enumerate}
\end{theorem}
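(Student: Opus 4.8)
The plan is to combine the structural decomposition of existentially definable relations set up just before Lemma~\ref{lem-nre2} with Item~\ref{l1} of that lemma, which---crucially---holds over \emph{every} ordered field. First I would recall that, up to logical equivalence, any relation defined by an $\exists^2$-formula in $(\Q^n,\tleq)$ is a finite union of relations of the form $C(p,q)\cap R$, where $C(p,q)$ is the requirement imposed between the two free variables $p$ and $q$---so $C$ is one of $\tleq$, $\tlneq$, $\neq$, $\tlneq_{\tinyneq}$, or no requirement at all (in which case $C=\Q^n\times\Q^n$; note $\tleq_{\tinyneq}$ coincides with $\tleq$)---and $R$ is a relation defined by a non-requiring basic $\exists^2$-formula. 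By Item~\ref{l1} of Lemma~\ref{lem-nre2}, every such $R$ contains a timelike, a spacelike and a lightlike related pair of points, and since each of $\tleq$, $\sleq$, $\lleq$ is contained in $\neq$, each of these witnessing pairs also lies in whichever of $\neq$, $\tlneq$ and $\tlneq_{\tinyneq}$ contains it.

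To rule out an existential definition of $\sleq$, I would show that no nonempty piece $C(p,q)\cap R$ can be contained in $\sleq$. If $C$ is the universal relation or $\neq$, then $C\cap R$ inherits a timelike pair from $R$; if $C=\tleq$, the piece lies in $\tleq$ and is nonempty (again because $R$ meets $\tleq$); and if $C=\tlneq$ or $C=\tlneq_{\tinyneq}$, then $C\cap R$ inherits a lightlike pair from $R$ (lightlike pairs being both $\tlneq$ and $\neq$). In every case a nonempty piece meets $\Q^n\times\Q^n\setminus\sleq$. Since a union equal to $\sleq$ can contain no point outside $\sleq$, every piece would have to be empty, whence the union is empty, contradicting $\sleq\neq\emptyset$. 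The same bookkeeping, with the word \emph{lightlike} replaced by \emph{spacelike} in the last case, rules out an $\exists^2$-definition of $\lleq$.

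For the universal assertions I would use that a relation is defined by a $\forall^2$-formula precisely when its complement is defined by an $\exists^2$-formula; so it suffices to prove that $\slneq=\tleq\cup\lleq\cup{=}$ and $\llneq=\tleq\cup\sleq\cup{=}$ are not $\exists^2$-definable. Re-running the case analysis, each piece must now avoid $\sleq$ (respectively $\lleq$): every choice of $C$ except $C=\tleq$ forces the piece to meet the forbidden relation through the spacelike (respectively lightlike) pair supplied by $R$, so the only surviving pieces satisfy $C=\tleq$ and the whole union is contained in $\tleq$. But $\tleq$ is a proper subset of $\slneq$ (it omits the lightlike pairs) and of $\llneq$ (it omits the spacelike pairs), so no such union can equal the target relation. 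This establishes all four claims of part~(1). Part~(2), concerning $(\Q^n,\sleq)$, then follows verbatim with the roles of $\tleq$ and $\sleq$ interchanged, because Item~\ref{l1} of Lemma~\ref{lem-nre2} treats $\tleq$ and $\sleq$ symmetrically.

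I expect the only delicate point to be the organisation of the case analysis over the five possible requirements $C(p,q)$ and the care needed to track which forbidden pair---timelike, spacelike or lightlike---each case produces, together with the translation of universal definability into existential definability of the complement. The substantive mathematics is already packaged in Lemma~\ref{lem-nre2}; in particular the argument never appeals to its field-dependent Item~\ref{l2}, using only the field-independent Item~\ref{l1}, which is exactly why the theorem holds over an arbitrary ordered field.
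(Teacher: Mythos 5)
Your proof is correct and follows essentially the same route as the paper: the same decomposition of an $\exists^2$-definable relation into basic pieces $C(p,q)\cap R$ with $R$ given by a non-requiring basic $\exists^2$-formula, and the same appeal to the field-independent Item (1) of Lemma~\ref{lem-nre2}. The only organizational difference is in the universal case, where the paper reduces a hypothetical $\forall^2$-definition of $\sleq$ to an $\exists^2$-definition of $\lleq$ (by negating and intersecting with $\neq$ and $\tlneq$), whereas you re-run the case analysis directly on the complements $\slneq$ and $\llneq$; both work.
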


\begin{proof}
  In both cases, it is enough to show the nonexistence of existential
  definitions because that implies the nonexistence of universal
  ones. This is so because exactly one of relations $\tleq$, $\lleq$,
  $\sleq$ and $=$ holds. Hence, for example, the negation of a
  universal definition of $\sleq$ defined from $\tleq$ can be turned
  into an existential definition of $\lleq$ from $\tleq$ by
  intersecting it with $\neq$ and $\tlneq$.

  By Item~\ref{l1} of Lemma~\ref{lem-nre2}, a relation defined from
  $\tleq$ by a non-requiring basic $\exists^2$-formula contains
  timelike, lightlike and spacelike pairs of points alike. So every
  relation which is definable from $\tleq$ by a basic
  $\exists^2$-formula contains a spacelike related point pair exactly
  if it contains a lightlike related one because every such relation
  is the intersections of a relation definable from $\tleq$ by a
  non-requiring basic $\exists^2$-formula and some of relations
  $\tleq$, $\tlneq$, $\neq$ and $=$. Since every relation definable by
  an $\exists^2$-formula is the union of relations definable by basic
  $\exists^2$-formulas, every relation which is definable from $\tleq$
  by a $\exists^2$-formula contains a spacelike related point pair
  exactly if it contains a lightlike related one. Consequently,
  neither $\sleq$ nor $\lleq$ is definable from $\tleq$ by an
  $\exists^2$-formula.

  The proof of that neither $\tleq$ nor $\lleq$ is definable from
  $\sleq$ by an $\exists^2$-formula is completely analogous.
\end{proof}

\noindent
If the $n=2$ or $(\Q,+,\cdot,\le)$ is a Euclidean field, then we can
prove more:

\begin{theorem}[$n\ge2$; \ref{eq-Eucl} or $n=2$]\label{thm-noedef2}
   Assume that $n=2$ or that $(\Q,+,\cdot,\le)$ is a Euclidean field.
   Then we have the followings:
  \begin{enumerate}
  \item\label{item-noe3tldef} If a binary relation $R$ is definable
    existentially or universally using only 4 variables (\ie by an
    $\exists^2$-formula or a $\forall^2$-formula) in $(\Q^n,\tleq)$,
    then $R$ is also definable by a quantifier free formula; in other
    words, $R$ has to be one of the $8$ relations built up from
    $\tleq$ and $=$ by Boolean operations, \cf Figure~\ref{fig-BA}.
  \item\label{item-noe3sldef} If a binary relation $R$ is definable
    existentially or universally using only 4 variables (\ie by an
    $\exists^2$-formula or a $\forall^2$-formula) in $(\Q^n,\sleq)$,
    then $R$ is also definable by a quantifier free formula; in other
    words, $R$ has to be one of the $8$ relations built up from
    $\sleq$ and $=$ by Boolean operations, \cf Figure~\ref{fig-BA}.
  \end{enumerate}
\end{theorem}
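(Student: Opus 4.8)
The plan is to reduce everything to the existential case and then feed the combinatorial normal form set up in the paragraphs preceding the statement into Lemma~\ref{lem-nre2}, which is the one real engine of the argument.

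First I would note that it suffices to treat $\exists^2$-formulas. Indeed, a relation $R$ defined by a $\forall^2$-formula $\forall z_1\forall z_2\,\psi$ is the complement of the relation defined by the $\exists^2$-formula $\exists z_1\exists z_2\,\lnot\psi$; since the $8$-element Boolean algebra generated by $\tleq$ and $=$ (the left front face of Figure~\ref{fig-BA}) is closed under complement, membership in it in the existential case transfers at once to the universal case. By the same token, because Lemma~\ref{lem-nre2} is stated symmetrically for $(\Q^n,\tleq)$ and $(\Q^n,\sleq)$, it is enough to carry out the argument for Item~\eqref{item-noe3tldef} in $(\Q^n,\tleq)$; Item~\eqref{item-noe3sldef} then follows verbatim.

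Next I would invoke the normal-form reduction already established above the statement: every relation definable by an $\exists^2$-formula is a union of relations definable by basic $\exists^2$-formulas, and the relation defined by each basic $\exists^2$-formula is an intersection $C(p,q)\cap R_D$, where $C(p,q)$ is the requirement placed between the two free variables and $R_D$ is the relation defined by a non-requiring basic $\exists^2$-formula. Disposing of the equality requirement as in that reduction (a required $p=q$ forces the relation into $\emptyset$ or $=$, both already quantifier-free definable), $C(p,q)$ ranges over $\Q^n\times\Q^n$ (no requirement between $p$ and $q$), $\tleq$, $\tlneq$, $\neq$ and $\tlneq_{\tinyneq}$, using that $\tleq_{\tinyneq}=\tleq$ since timelike related points are distinct.

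Then comes the only substantive input. Item~\ref{l2} of Lemma~\ref{lem-nre2} tells me that $R_D$ is either $\neq$ or $\Q^n\times\Q^n$. With this in hand I would simply run through the finitely many intersections $C(p,q)\cap R_D$ and verify that each lands in the set $\{\emptyset,\,=,\,\tleq,\,\tlneq_{\tinyneq},\,\tleq\cup{=},\,\tlneq,\,\neq,\,\Q^n\times\Q^n\}$: intersecting any $C(p,q)$ with $\Q^n\times\Q^n$ leaves it unchanged, while intersecting with $\neq$ leaves $\tleq$, $\neq$ and $\tlneq_{\tinyneq}$ fixed and sends $\tlneq$ to $\tlneq\cap\neq=\tlneq_{\tinyneq}$. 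Thus every basic $\exists^2$-formula defines one of the $8$ relations; and since these $8$ relations form a Boolean algebra closed under union, any union of them — hence any $\exists^2$-definable relation — is again one of the $8$. This settles the existential case, and by the first paragraph the universal case and the $(\Q^n,\sleq)$ statement follow. The step I expect to require the most care is the bookkeeping in this last paragraph: making sure the reduction to $C(p,q)\cap R_D$ genuinely exhausts all basic $\exists^2$-formulas and that the equality requirement is handled correctly, since the entire force of the theorem is already concentrated in Lemma~\ref{lem-nre2}.
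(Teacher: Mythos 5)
Your proposal is correct and follows essentially the same route as the paper's proof: reduce the universal case to the existential one by negation, decompose $\exists^2$-definable relations into unions of basic $\exists^2$-formulas, write each of those as the intersection of the constraint between the free variables with a relation defined by a non-requiring basic $\exists^2$-formula, and invoke Item~\ref{l2} of Lemma~\ref{lem-nre2}. The only difference is that you carry out the final bookkeeping of intersections explicitly where the paper leaves it implicit; that bookkeeping is accurate.
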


\begin{proof}
  Because the negation of a quantifier free formula is also quantifier
  free and every $\forall^2$-formula is equivalent to the negation of
  a $\exists^2$-formula, it is enough to show the existential case of
  both items. We are going to prove the two items simultaneously. Let
  $\aleq$ be one of the relations $\tleq$ and $\sleq$.

  Since relations definable by $\exists^2$-formulas are unions of ones
  definable by basic $\exists^2$-formulas, it is enough to see that
  the statement holds for basic $\exists^2$-formulas. Since every
  relation definable by basic $\exists^2$-formula $\varphi$ is the
  intersection of a relation definable by a non-requiring basic
  $\exists^2$-formula and the Boolean-definable relation that
  $\varphi$ requires between its free variables, it is enough to see
  that every relation definable by a non-requiring basic
  $\exists^2$-formula is also Boolean definable from $\aleq$ and
  $=$. This follows by Item~\ref{l2} of Lemma~\ref{lem-nre2}, which
  completes the proof of the theorem.
\end{proof}

Using this colorful graph embedding perspective, we are going to show
that the following $\exists^3$-formula defines $\sleq$ from $\tleq$ in
$(\Q^n,\tleq)$:
\begin{equation*}
  \newettos(p,q)  \defeq\exists x\exists y\exists z
    (x\tleq\tleq\tlneq_{\tinyneq}\tlneq_{\tinyneq} pyqz \kaj y
    \tlneq_{\tinyneq}\tlneq_{\tinyneq}\tleq pqz \kaj
    z\tlneq_{\tinyneq}\tleq pq).
\end{equation*}
From this, we will see that the following $\forall^3$-formula defines
$\lleq$ from $\tleq$ in $(\Q^n,\tleq)$:
\begin{equation*}
\newuttol(p,q) \defeq \lnot\newettos(p,q) \kaj p\neq q \kaj p\tlneq q.
\end{equation*}

\begin{theorem}[$n\ge2$; \ref{eq-Eucl} or $n=2$]\label{thm-newedef2}
  Assume that $n=2$ or that $(\Q,+,\cdot,\le)$ is a Euclidean field,
  then:
  \begin{enumerate}
  \item\label{newe2ttos} Spacelike relatedness $\sleq$ can be defined
    existentially from timelike relatedness $\tleq$ using only 5
    variables by $\exists^3$-formula $\newettos$ in $(\Q^n,\tleq)$.
  \item\label{newu2ttol} Lightlike relatedness $\lleq$ can be defined
    universally from timelike relatedness $\tleq$ using only 5
    variables by $\forall^3$-formula $\newuttol$ in $(\Q^n,\tleq)$.
  \end{enumerate}
\end{theorem}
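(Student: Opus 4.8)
The plan is to avoid verifying the two inclusions $\sleq\subseteq R$ and $R\subseteq\sleq$ directly, and instead lean on the atom structure already established. Writing $R$ for the relation that $\newettos$ defines in $(\Q^n,\tleq)$, Corollary~\ref{cor-atoms} tells us (since $n=2$ or $\Q$ is Euclidean) that $R$ is automatically a union of some of $\tleq$, $\lleq$, $\sleq$ and $=$. So to prove Item~\eqref{newe2ttos} it will be enough to check just two things: that $R$ contains at least one spacelike pair, and that $R$ contains no equal, timelike or lightlike pair. For the first I would exhibit a single witnessing configuration and then invoke Corollary~\ref{cor-aut}, which promotes one spacelike pair in $R$ to all of $\sleq\subseteq R$. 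Concretely, I would take $\vp=(0,-1,0,\ldots,0)$ and $\vq=(0,1,0,\ldots,0)$ (spacelike, as $0<4$) together with $\vx=(1,-1/2,0,\ldots,0)$, $\vy=(0,0,\ldots,0)$ and $\vz=(1,1/2,0,\ldots,0)$, and verify by direct computation that all nine atomic requirements of $\newettos$ hold. This is the only point where the hypothesis enters, through Corollaries~\ref{cor-aut} and~\ref{cor-atoms}.

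The heart of the argument will be showing that $R$ meets neither $=$, $\tleq$ nor $\lleq$. The case $\vp=\vq$ is immediate, since $\vx\tleq\vp$ and $\vx\tlneq_{\tinyneq}\vq$ are incompatible when $\vp=\vq$. For the causal cases I would suppose $\vp\neq\vq$ and $\vp\slneq\vq$, and after applying the time-reversing automorphism of Remark~\ref{rem-aut} (which preserves the definable relation $R$) assume that $\vq$ lies in the causal future of $\vp$, i.e. $\vq-\vp$ is a nonzero future-directed causal vector. Assuming for contradiction that $\vx,\vy,\vz$ witness $\newettos(\vp,\vq)$, the whole deduction runs through two closure facts for future cones, which I would phrase via the squared defining inequalities so that they hold over any ordered field and every $n\ge2$: the sum of two future-directed timelike vectors is future timelike, and the sum of a future-directed timelike and a future-directed causal vector is future timelike. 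From $\vx\tleq\vp$ and $\vx\tlneq\vq$ it follows that $\vx-\vp$ is future directed, for otherwise $\vq-\vx=(\vp-\vx)+(\vq-\vp)$ would be future timelike, giving $\vx\tleq\vq$. Symmetrically $\vq-\vz$ is future timelike. Then $\vx\tleq\vy$ forces $\vx-\vy$ to be future timelike (else $\vy-\vp$ would be future timelike, contradicting $\vy\tlneq\vp$), and $\vy\tleq\vz$ forces $\vy-\vz$ to be future timelike (else $\vq-\vy$ would be future timelike, contradicting $\vy\tlneq\vq$). Adding these last two vectors shows $\vx-\vz$ is future timelike, so $\vx\tleq\vz$, contradicting $\vx\tlneq_{\tinyneq}\vz$. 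Hence no witnesses exist, so $R$ avoids $\tleq$ and $\lleq$; together with the first paragraph this gives $R=\sleq$.

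Item~\eqref{newu2ttol} would then be formal, exactly as in Theorem~\ref{thm-edef2}: since for every $\vp,\vq$ precisely one of $\vp\tleq\vq$, $\vp\lleq\vq$, $\vp\sleq\vq$ and $\vp=\vq$ holds, and $\newettos$ defines $\sleq$, the formula $\newuttol=\lnot\newettos\kaj p\neq q\kaj p\tlneq q$ picks out exactly the lightlike pairs; moreover $\lnot\newettos$ is a $\forall^3$-formula and the two extra conjuncts are quantifier free in $p,q$, so they can be pulled inside the universal quantifiers, making $\newuttol$ a $\forall^3$-formula in the five variables $p,q,x,y,z$. The main obstacle I anticipate is the exclusion step of the second paragraph: one must order the four local deductions (fixing the nappe of $\vx$, then of $\vz$, then placing $\vy$ on the correct side of each) so that they chain into the single contradiction $\vx\tleq\vz$, and one must state the cone-closure facts through the squared inequalities (e.g. via the Lagrange/Cauchy--Schwarz identity) rather than Euclidean norms, so that the timelike and lightlike cases are dispatched uniformly and over an arbitrary ordered field.
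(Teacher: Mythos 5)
Your proposal is correct and follows essentially the same route as the paper's proof: the containment $\sleq\subseteq R$ comes from exhibiting the witness configuration of Figure~\ref{fig-neweTtoS} and invoking the transitivity of the automorphism group (Proposition~\ref{prop-aut} via Corollary~\ref{cor-aut}), while the exclusion of equal and causally related pairs is the paper's future/past alternation argument around the odd cycle on $\vp,\vx,\vy,\vz,\vq$, merely repackaged as closure of the future cone under addition. The one welcome refinement is that you make explicit, via the Lagrange/Cauchy--Schwarz identity, why the exclusion direction works over arbitrary ordered fields, a fact the paper only records informally in the remark following the theorem.
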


\begin{proof}
  It is enough to prove Item~\ref{newe2ttos} because
  Item~\ref{newu2ttol} follows from Item~\ref{newe2ttos} and the fact
  that exactly one of relations $\vp\tleq\vq$, $\vp\lleq\vq$,
  $\vp\sleq\vq$, and $\vp=\vq$ holds for every $\vp,\vq\in\Q^n$.

  To prove Item~\ref{newe2ttos}, let us assume first that
  $\newettos(\vp,\vq)$ holds for some $\vp,\vq\in\Q^n$ and show that
  then $\vp$ and $\vq$ has to be spacelike related. Let $\vx$, $\vy$,
  and $\vz$ be such points that they show the validity of
  $\newettos(\vp,\vq)$. Clearly, $\vp$ cannot be equal to $\vq$ since
  $\vz\tlneq_{\tinyneq} \vp$ but $\vz\tleq \vq$.

  Let us now show that points $\vp$ and $\vq$ cannot be timelike or
  lightlike related. If they were such, we could assume, without
  loosing generality, that $\vq$ is in the causal past of
  $\vp$. Then $\vz$ should be in the timelike future of $\vq$ otherwise
  $\vz$ and $\vp$ were timelike related contrary to the requirement
  $z\tlneq_{\tinyneq}p$ of $\newettos$. For analogous reasons, the
  relation being in the future/past should alternate between the
  consecutive points along the circle $\vq-\vz-\vy-\vx-\vp-\vq$ which
  is not possible because there are odd many edges between them, \ie
  $\vy$ has to be in the past of $\vz$, $\vx$ has to be in the future
  of $\vy$, $\vp$ has to be in the past of $\vx$, and finally $\vq$
  has to be in the future of $\vp$ contradicting that $\vq$ is in the
  past of $\vp$. Hence $\vp$ and $\vq$ has to be spacelike related.
  
  To show the other direction, let $\vp$ and $\vq$ be two arbitrary
  spacelike related points. We can assume that $\vp$ and $\vq$ are the
  horizontally related, in the \txplane by the automorphism argument,
  \cf Proposition~\ref{prop-aut}. Then we can choose $\vy$ to be the
  midpoint of $\vp$ and $\vq$, and choose $\vx$ and $\vz$ respectively
  above $\vp$ and $\vq$ high enough but not too high, \cf
  Figure~\ref{fig-neweTtoS}.
  \begin{figure}
    \centering \begin{tikzpicture}[scale=0.85]
\tikzstyle{poento}=[circle, draw, inner sep=2, fill=white];
\tikzstyle{linio}=[very thick];

\tikzstyle{tl}=[blue,very thick]
\tikzstyle{ll}=[very thick, red, dashed]  
\tikzstyle{ntl}=[very thick, brown, dashdotted] 
\tikzstyle{sl}=[very thick, green!82!black, dotted]

\begin{scope}[shift={(3,1.2)},scale=0.45]
\node at (-6.5,1.5) {\Huge$\newettos\;=$};
\node (E) at (-3,3) {\Huge$\exists$};
\node[poento] (y) at (0,1) {$y$};
\node[poento] (x)  at (-1,3) {$x$};
\node[poento] (z) at (1,3) {$z$};
\node[poento] (p) at (-2,0) {$p$};
\node[poento] (q) at (2,0) {$q$};

\draw[ntl] (p) to  (z) to (x)  to (q);
\draw[ntl] (p) to (y) to (q) ;
\draw[tl] (p) to (x) to (y) to (z) to (q);
\end{scope}

\begin{scope}[shift={(0,-3.7)},scale=0.45]
\node (E) at (-3,1.8) {\Huge$\nexists$};
\node[poento] (y) at (0,1) {$y$};
\node[poento] (x)  at (-1,3) {$x$};
\node[poento] (z) at (1,3) {$z$};
\node[poento] (p) at (-2,0) {$p$};
\node[poento] (q) at (2,0) {$q$};

\draw[ntl] (p) to  (z) to (x)  to (q);
\draw[ntl] (p) to (y) to (q) ;
\draw[tl] (p) to (x) to (y) to (z) to (q);
\draw[ll] (p) to (q);
\end{scope}

\begin{scope}[shift={(3,-3.7)},scale=0.45]
\node (E) at (-3,1.8) {\Huge$\nexists$};
\node[poento] (y) at (0,1) {$y$};
\node[poento] (x)  at (-1,3) {$x$};
\node[poento] (z) at (1,3) {$z$};
\node[poento] (p) at (-2,0) {$p$};
\node[poento] (q) at (2,0) {$q$};

\draw[ntl] (p) to  (z) to (x)  to (q);
\draw[ntl] (p) to (y) to (q) ;
\draw[tl] (p) to (x) to (y) to (z) to (q);
\draw[tl] (p) to (q);
\end{scope}

\begin{scope}[shift={(1.5,1.5)}]
\draw[tl] (-1.8,-1.5) to (-0.7,-1.5) node[right] {$\tleq$};
\draw[ntl] (0.2,-1.5) to (1.3,-1.5) node[right] {$\tlneq_{\tinyneq}$};
\draw[ll] (0.2,-2) to (1.3,-2) node[right] {$\lleq$};
\draw[sl] (0.2,-2.5) to (1.3,-2.5) node[right] {$\sleq$};
\end{scope}

\begin{scope}[shift={(8.5,-3)},scale=1.2]
\draw[dotted] (-3,-1) rectangle (3,5);
\coordinate (vp) at (-2,0);
\coordinate (vq) at (2,-0);
\coordinate (vy) at (0,0); 
\coordinate (vz) at (2,3);
\coordinate (vx) at (-2,3);

\node[below left] at (vp) {$\vp$};
\node[below right] at (vq) {$\vq$};
\node[above left] at (vx) {$\vx$};
\node[below=2] at (vy) {$\vy$};
\node[above right] at (vz) {$\vz$};

\draw[tl] (vp) to (vx) to (vy) to (vz) to (vq);
\draw[sl] (vp) to (vy) to (vq) to (vx) to (vz) to (vp); 

\draw[ll,thick] ([shift={(-0.5,0.5)}]vp) to (vp) to ([shift={(4.5,4.5)}]vp); 
\draw[ll,thick] ([shift={(0.5,0.5)}]vq) to (vq) to ([shift={(-4.5,4.5)}]vq); 
\draw[ll,thick] ([shift={(-2.5,2.5)}]vy) to (vy) to ([shift={(2.5,2.5)}]vy);

\draw[fill] (vp) circle [radius=0.07];
\draw[fill] (vq) circle [radius=0.07];
\draw[fill] (vx) circle [radius=0.07];
\draw[fill] (vy) circle [radius=0.07];
\draw[fill] (vz) circle [radius=0.07];

\node[below] at (0,-1) {\txplane};

\end{scope}

\end{tikzpicture}
    \caption{The figure illustrates the proof of that the relation
      defined by $\exists^3$-formula $\newettos$ defines $\sleq$ if
      $n=2$ or $(\Q,+,\cdot,\le)$ is a Euclidean
      field.\label{fig-neweTtoS}}
  \end{figure}
\end{proof}

\begin{remark}[$n\ge3$]
  Regarding Theorem~\ref{thm-newedef2}, the proof of the direction
  showing that $\sleq$ contains the relation defined by $\newettos$
  works over arbitrary ordered field.  The other direction can be
  generalized to Archimedean ordered fields using the facts that (a)
  every Archimedean ordered field is isomorphic to a subfield of the
  reals and (b) every subfield of the reals is dense in the field of
  reals. However, this proof idea does not work in general, and hence,
  we do not know whether it is possible that $\newettos$ defines a
  nonempty relation strictly smaller than $\sleq$ in some
  non-Archimedean ordered fields.
\end{remark}

Let us now replace $\tleq$ with $\sleq$ in formulas $\newettos$ and
$\newuttol$ and see when do they work as respective definitions of
$\tleq$ and $\lleq$. So let
\begin{equation*}
  \newestot(p,q)  \defeq  \exists x\exists y\exists z
    (x\sleq\sleq\slneq_{\tinyneq}\slneq_{\tinyneq} pyqz \kaj y
    \slneq_{\tinyneq}\slneq_{\tinyneq}\sleq pqz \kaj
    z\slneq_{\tinyneq}\sleq pq)
\end{equation*}
and let
\begin{equation*}
\newustol(p,q) \defeq \lnot\newestot(p,q) \kaj p\neq q \kaj p\slneq q.
\end{equation*}

As before, by the isomorphism between $(\Q^2,\tleq,\sleq)$ and
$(\Q^2,\sleq,\tleq)$, we get the following corollary of
Theorem~\ref{thm-newedef2}.

\begin{corollary}[$n=2$]\label{cor-newe2stot}
  Let $(\Q,+,\cdot,\le)$ be an arbitrary ordered field, then:
  \begin{enumerate}
  \item\label{newe2stot} Timelike relatedness $\tleq$ can be defined
    existentially from spacelike relatedness $\sleq$ using only 5
    variables by $\exists^3$-formula $\newestot$ in $(\Q^2,\sleq)$.
  \item\label{newu2stol} Lightlike relatedness $\lleq$ can be defined
    universally from spacelike relatedness $\sleq$ using only 5
    variables by $\forall^3$-formula $\newustol$  in $(\Q^2,\sleq)$. \qed
  \end{enumerate}
\end{corollary}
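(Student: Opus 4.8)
The plan is to transfer Theorem~\ref{thm-newedef2} across the coordinate-swapping isomorphism, exactly as Corollary~\ref{cor-e2stot} was obtained from Theorem~\ref{thm-edef2}. Throughout, let $\alpha\colon(\nt,\nx)\mapsto(\nx,\nt)$, which is the isomorphism between $(\Q^2,\tleq,\sleq)$ and $(\Q^2,\sleq,\tleq)$ already invoked in observation O4: swapping the two coordinates turns the defining inequality $(\np_0-\nq_0)^2>(\np_1-\nq_1)^2$ of $\tleq$ into that of $\sleq$ and conversely, so that $\vp\tleq\vq\iff\alpha(\vp)\sleq\alpha(\vq)$ and $\vp\sleq\vq\iff\alpha(\vp)\tleq\alpha(\vq)$ for all $\vp,\vq\in\Q^2$.

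First I would record the purely syntactic observation that $\newestot$ arises from $\newettos$ by the substitution $\tleq\mapsto\sleq$, $\tlneq_{\tinyneq}\mapsto\slneq_{\tinyneq}$, and that $\alpha$ carries the relation $\tleq$ to $\sleq$, hence its complement $\tlneq$ to $\slneq$ and (since $\alpha$ is a bijection, preserving $=$) the relation $\tlneq_{\tinyneq}$ to $\slneq_{\tinyneq}$. Since first-order satisfaction is preserved by isomorphisms, this gives, for all $\vp,\vq\in\Q^2$,
$$(\Q^2,\tleq)\models\newettos(\vp,\vq)\iff(\Q^2,\sleq)\models\newestot\bigl(\alpha(\vp),\alpha(\vq)\bigr).$$
By Theorem~\ref{thm-newedef2} (in the case $n=2$, which imposes no restriction on the field) the left-hand side is equivalent to $\vp\sleq\vq$, while $\vp\sleq\vq$ is equivalent to $\alpha(\vp)\tleq\alpha(\vq)$. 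Abbreviating $\vp'=\alpha(\vp)$ and $\vq'=\alpha(\vq)$ and using that $\alpha$ is a bijection of $\Q^2$, we conclude that $\newestot(\vp',\vq')$ holds in $(\Q^2,\sleq)$ exactly when $\vp'\tleq\vq'$; this is Item~\eqref{newe2stot}.

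Finally, Item~\eqref{newu2stol} follows from Item~\eqref{newe2stot} in the same way $\newuttol$ was handled in Theorem~\ref{thm-newedef2}: the formula $\newustol(p,q)=\lnot\newestot(p,q)\kaj p\neq q\kaj p\slneq q$ holds of $\vp,\vq$ precisely when they are distinct, not spacelike related, and---by Item~\eqref{newe2stot}---not timelike related; since exactly one of $\tleq$, $\lleq$, $\sleq$, $=$ holds of any pair $\vp,\vq\in\Q^2$, the only remaining possibility is $\vp\lleq\vq$. There is no genuine difficulty to overcome here: the entire content is the isomorphism transfer, and the only point meriting a moment's attention is confirming the syntactic match $\newettos\leftrightarrow\newestot$ under $\tleq\leftrightarrow\sleq$, so that the transported formula is literally $\newestot$ rather than some relabelled variant.
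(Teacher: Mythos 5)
Your proposal is correct and is exactly the paper's argument: the paper derives Corollary~\ref{cor-newe2stot} from Theorem~\ref{thm-newedef2} in one line via the coordinate-swapping isomorphism $\alpha:(\nt,\nx)\mapsto(\nx,\nt)$ between $(\Q^2,\tleq,\sleq)$ and $(\Q^2,\sleq,\tleq)$, just as Corollary~\ref{cor-e2stot} was obtained from Theorem~\ref{thm-edef2}. You merely spell out the syntactic match between $\newettos$ and $\newestot$ and the exhaustiveness of $\tleq$, $\lleq$, $\sleq$, $=$ for Item~\eqref{newu2stol}, both of which the paper leaves implicit.
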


\begin{remark}[$n\ge3$]
The assumption $n=2$ cannot be omitted from
Corollary~\ref{cor-newe2stot}, \ie $\exists^3$-formula $\newestot$
does not define $\tleq$ in $(\Q^n,\sleq)$ if $n\ge3$. It is enough to
see this in case $n=3$ since $(\Q^3,\sleq)$ can be embedded as a
submodel to $(\Q^n,\sleq)$ for all $n\ge3$. To show it for $n=3$,
consider the following lightlike related points $\vp=(-2,-2,0)$ and
$\vq=(2,2,0)$. It is straightforward to check that points
$\vx=(-2,0,3)$, $\vy=(0,0,0)$ and $\vz=(2,0,3)$ are such that they
show the validity of $\newestot(\vp,\vq)$, see
Figure~\ref{fig-newestot} for a picturesque justification.  Hence,
$\forall^4$-formula $\newustol$ does not define $\lleq$ if $n\ge 3$
because the relation defined by it does not contain lightlike related
point pair $\vp=(-2,-2,0)$ and $\vq=(2,2,0)$.
  \begin{figure}[!hbt]
    \centering \begin{tikzpicture}[scale=.5]

\tikzstyle{tl}=[blue,very thick]
\tikzstyle{ll}=[very thick, red, dashed]  
\tikzstyle{ntl}=[very thick, brown, dashdotted] 
\tikzstyle{sl}=[very thick, green!82!black, dotted] 

\tikzstyle{poento}=[circle, draw, inner sep=2, fill=white];

\begin{scope}[shift={(7.5,0)}]
\draw[dotted]  (-7,-4.5) rectangle (4,4.5);
\draw[very thick, blue,]  (-2,0) to (0,3);

\draw[red] (-2,0) circle(4);
\draw[red] (0,0) circle (2);
\draw[ll] (0,0) to (2,0);
\draw[ll] (-2,0) to (-2,4);

\draw[fill] (-2,0) circle [radius=.04] node[above left] {$\vp$};
\draw[fill] (-2,0) circle [radius=.04] node[below left] {\tiny$(-2,-2,0)$};

\draw[fill] (0,0) circle [radius=.07] node[above] {$\vy$};
\draw[fill] (0,0) circle [radius=.07] node[below] {\tiny$(0,0,0)$};

\draw[fill] (2,0) circle [radius=.12] node[above right] {$\vq$};
\draw[fill] (2,0) circle [radius=.12] node[below right] {\tiny$(2,2,0)$};

\draw[fill] (0,3) circle [radius=.12] node[above left] {$\vz$};
\draw[fill] (0,3) circle [radius=.12] node[above right] {\tiny$(2,0,3)$};

\node[below] at (-1.5,-5) {top view};
\end{scope}

\begin{scope}[shift={(-6,-11)}]
\draw[dotted]  (-6,-4.5) rectangle (5,4.5);

\draw[ll] (-2,-2) to (2,2);
\draw[sl]  (0,-2) to (-2,-2) ;
\draw[sl]  (0,2) to (2,2) ;

\draw[fill] (-2,-2) circle [radius=.12] node[above left] {$\vp$};
\draw[fill] (-2,-2) circle [radius=.12] node[below left] {\tiny$(-2,-2,0)$};

\draw[fill] (0,0) circle [radius=.12] node[above] {$\vy$};
\draw[fill] (0,0) circle [radius=.12] node[below right] {\tiny$(0,0,0)$};

\draw[fill] (2,2) circle [radius=.12] node[above right] {$\vq$};
\draw[fill] (2,2) circle [radius=.12] node[below right] {\tiny$(2,2,0)$};

\draw[fill] (0,2) circle [radius=.07] node[above left] {$\vz$};
\draw[fill] (0,2) circle [radius=.07] node[below left] {\tiny$(2,0,3)$};

\draw[fill] (0,-2) circle [radius=.07] node[above right] {$\vx$};
\draw[fill] (0,-2) circle [radius=.07] node[below right, black] {\tiny$(-2,0,3)$};
\node[below] at (-1.5,-5) {front view};

\end{scope}

\begin{scope}[shift={(7.5,-11)}]
\draw[dotted]  (-7,-4.5) rectangle (4,4.5);

\draw[ll]  (0,-2) to  (-2,0) to (0,2);
\draw[tl]  (1,-2) to (1,2);
\draw[sl]  (1,-2) to (-2,0) to (1,2);

\draw[fill] (-2,-2) circle [radius=.04] node[above left] {$\vp$};
\draw[fill] (-2,-2) circle [radius=.04] node[below left] {\tiny$(-2,-2,0)$};

\draw[fill] (-2,0) circle [radius=.07] node[above left] {$\vy$};
\draw[fill] (-2,0) circle [radius=.07] node[below left] {\tiny$(0,0,0)$};

\draw[fill] (-2,2) circle [radius=.12] node[above left] {$\vq$};
\draw[fill] (-2,2) circle [radius=.12] node[below left] {\tiny$(2,2,0)$};

\draw[fill] (1,2) circle [radius=.07] node[above right] {$\vz$};
\draw[fill] (1,2) circle [radius=.07] node[below right] {\tiny$(2,0,3)$};

\draw[fill] (1,-2) circle [radius=.07] node[above right] {$\vx$};
\draw[fill] (1,-2) circle [radius=.07] node[below right, black] {\tiny$(-2,0,3)$};
\node[below] at (-1.5,-5) {side view};

\end{scope}

\begin{scope}[shift={(-6.5,0)}]
\node (E) at (-3,3) {\Huge$\exists$};
\node[poento] (y) at (0,1) {$y$};
\node[poento] (x)  at (-1,3) {$x$};
\node[poento] (z) at (1,3) {$z$};
\node[poento] (p) at (-2,0) {$p$};
\node[poento] (q) at (2,0) {$q$};

\draw[tl] (p) to  (z) to (x)  to (q);
\draw[ll] (p) to (y) to (q) to (p);
\draw[sl] (p) to (x) to (y) to (z) to (q);

\draw[tl] (-1,-2) to (1,-2) node[right] {$\tleq$};
\draw[ll] (-1,-3) to (1,-3) node[right] {$\lleq$};
\draw[sl] (-1,-4) to (1,-4) node[right] {$\sleq$};
\end{scope}

\end{tikzpicture}
    \caption{This figure illustrates why $\exists^3$-formula
      $\newestot$ does not define timelike relatedness $\tleq$ in
      $(\Q^n,\sleq)$ if $n\ge3$.\label{fig-newestot}}
  \end{figure}
\end{remark}

\begin{theorem}[$n\ge2$]\label{thm-noedef}
  Let $n\ge 2$.  Relation $\lleq$ is not definable existentially from
  $\tleq$ in $(\Q^n,\tleq)$. Similarly, $\lleq$ is not definable
  existentially from $\sleq$ in $(\Q^n,\sleq)$. Thus, $\sleq$ is not
  definable universally from $\tleq$ in $(\Q^n,\tleq)$ and $\tleq$ is
  not definable universally from $\sleq$ in $(\Q^n,\sleq)$.
\end{theorem}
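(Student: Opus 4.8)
The plan is to derive a contradiction from a single, deliberately \emph{non}-automorphic map of $\Q^n$: the time-rescaling $\Phi_\lambda\colon(r_0,r_1,\ldots,r_{n-1})\mapsto(\lambda r_0,r_1,\ldots,r_{n-1})$ for a field element $0<\lambda<1$. This map is not an automorphism of $(\Q^n,\tleq)$ (it can turn a lightlike pair into a spacelike one), but it preserves the truth of existential formulas in exactly the direction I need. I would first invoke the reduction made in the paragraphs preceding Lemma~\ref{lem-nre2}: up to logical equivalence every $\exists^*$-formula over $(\Q^n,\tleq)$ is a disjunction of basic existential formulas, so if $\varphi$ defined $\lleq$ and held on a lightlike pair, there would be a \emph{single} witnessing assignment satisfying a conjunction $C$ of finitely many literals, each of the forms $u\tleq v$, $u\tlneq v$, $u=v$, $u\neq v$ on the variables $p,q,z_1,\ldots,z_k$. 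It then suffices to track how $\Phi_\lambda$ acts on these finitely many literals.

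The heart of the proof is the elementary computation that $\Phi_\lambda$ multiplies the time separation $d$ of any two points by $\lambda$ (hence $d^2$ by $\lambda^2$) while leaving the spatial separation $s$ (the sum of squares of spatial coordinate differences) unchanged; recall $\tleq$ means $d^2>s$, $\lleq$ means $d^2=s$ with $d\neq0$, and $\sleq$ means $d^2<s$. Consequently, scaling down ($\lambda<1$) sends the whole class $\tlneq=\{\lleq,\sleq,=\}$ into $\{\sleq,=\}\subseteq\tlneq$, so \emph{every} $\tlneq$-literal (and trivially every $=$- and $\neq$-literal) is preserved for any $\lambda\in(0,1)$, and a genuinely lightlike pair is sent to a spacelike one. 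The only literals that can break are the finitely many strict requirements $u\tleq v$, and for these I choose $\lambda$ quantitatively: let $m<1$ be the maximum of the finitely many ratios $s/d^2$ arising from the $\tleq$-literals (each such $d\neq0$, so $m$ is well defined); then $\lambda:=1-(1-m)/4$ lies strictly between $0$ and $1$ and satisfies $\lambda^2>m$, hence preserves all $\tleq$-literals. Crucially this uses no square roots and no completeness, so it is valid over an arbitrary ordered field.

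Assembling this: if some $\exists^*$-formula $\varphi$ defined $\lleq$ in $(\Q^n,\tleq)$, pick any lightlike pair $\vp_0,\vq_0$ (which exists since $n\ge2$) and witnesses satisfying a basic disjunct $C$; applying $\Phi_\lambda$ with the $\lambda$ above keeps every literal of $C$ true, so $\varphi$ holds of $(\Phi_\lambda\vp_0,\Phi_\lambda\vq_0)$, a \emph{spacelike} pair, contradicting that $\varphi$ defines $\lleq$. That $\lleq$ is not existentially definable from $\sleq$ I would prove by the mirror argument, scaling time \emph{up} ($\lambda>1$): then $\slneq=\{\tleq,\lleq,=\}$ is mapped into $\{\tleq,=\}\subseteq\slneq$, the lightlike pair becomes timelike, and the finitely many strict $\sleq$-literals are preserved for an explicit $\lambda>1$ chosen in the same elementary way. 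Finally the two universal nondefinability claims follow formally: a universal definition of $\sleq$ from $\tleq$ would make its negation an existential formula for the complement $\slneq$, whence $\lleq=\slneq\cap\tlneq\cap\neq$ would be existentially definable from $\tleq$ (the last two conjuncts being quantifier free), contradicting the first part; the case of $\tleq$ from $\sleq$ is symmetric.

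I expect the main obstacle to lie not in the combinatorics but in making the perturbation legitimate over an \emph{arbitrary} ordered field, where there is no ambient topology to license ``choose $\lambda$ close enough to $1$''. The two points that must be gotten right are therefore (i) the fully explicit choice of $\lambda$ exhibited above, which avoids any appeal to square roots or limits, and (ii) the observation that scaling time \emph{down} (resp.\ \emph{up}) is precisely the direction that keeps \emph{all} negative literals true for free while pushing the distinguished pair off the light cone into the spacelike (resp.\ timelike) region. Once these are in place, the remainder is routine bookkeeping on literals and the standard disjunctive-normal-form reduction already set up before Lemma~\ref{lem-nre2}.
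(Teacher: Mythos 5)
Your proof is correct and is essentially the paper's own argument: your map $\Phi_\lambda$ is exactly the paper's $T_\varepsilon$ with $\lambda=1-\varepsilon$, and the structure (DNF reduction to finitely many literals, noting that scaling time down preserves all $\tlneq$-, $=$- and $\neq$-literals for free while pushing the lightlike pair into the spacelike region, an explicit choice of scaling factor valid over any ordered field, the mirror scaling-up argument for $\sleq$, and deducing the universal claims from the partition into $\tleq,\lleq,\sleq,=$) coincides with the paper's proof. Your only addition is the fully explicit formula $\lambda=(3+m)/4$, which indeed satisfies $\lambda^2>m$ since $(m-1)(m-9)>0$ for $m<1$; the paper instead exhibits an explicit threshold $\delta$ per timelike pair, which amounts to the same elementary bookkeeping.
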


\begin{proof}
  Let formula $\varphi(x,y)$ be an arbitrary existential formula in
  the language of $(\Q^n,\tleq)$. Then $\varphi(x,y)$ is logically
  equivalent to a formula of the following form $$\exists z_1\exists
  z_2\ldots\exists z_k\; B(x,y,z_1,\ldots,z_k)$$ such that $B$ is a
  Boolean combination of relations $\tleq$ and $=$ between
    variables $z_1,\ldots,z_k$, $x$ and $y$.

  We are going to show that if the relation defined by $\varphi$ holds
  for some lightlike related points $\vx$ and $\vy$, then it also
  holds for some spacelike related points $\vx'$ and $\vy'$, and hence
  $\varphi$ cannot be a definition of lightlike relatedness.
  To see this, let $\vx$ and $\vy$ be lightlike related points of
  $\Q^n$ for which $\varphi(\vx,\vy)$ holds. Then there are points
  $\vz_1,\ldots,\vz_k\in\Q^n$ such that $B(\vx,\vy,\vz_1,\ldots,\vz_k)$
  holds. There is a small enough $\varepsilon>0$ such that map
  $$T_\varepsilon:(\nr_0,\nr_1,\ldots,\nr_{n-1})\mapsto
  ((1-\varepsilon)\nr_0,\nr_1,\ldots,\nr_{n-1})$$ scaling time down
  does not change the $\tleq$, $\tlneq$, $=$ and $\neq$ relations
  between points $\vz_1,\ldots,\vz_k$, $\vx$ and $\vy$. This is so
  because of the followings. For fixed timelike related points $\vp$
  and $\vq$, there is a $\delta>0$ such that $T_\varepsilon(\vp)$ and
  $T_\varepsilon(\vq)$ are also timelike related for all
  $0<\varepsilon<\delta$. Since there are only finitely many pairs of
  points to consider, there is an appropriate $\varepsilon$ preserving
  the timelike relatedness relations between points $\vx$, $\vy$,
  $\vz_1,\ldots$, $\vz_k$. Let us fix such an $\varepsilon$. Then map
  $T_\varepsilon$ takes $\tlneq$-related points to $\tlneq$-related
  ones because it decreases the time difference but does not change
  the spatial distance. Also $T_\varepsilon$ maps different points to
  different ones because it is a bijection. Let $\vz_1',\ldots$,
  $\vz_k'$, $\vx'$ and $\vy'$ be the $T_\varepsilon$-images of points
  $\vz_1,\ldots,\vz_k$, $\vx$, and $\vy$, respectively. Then
  $B(\vx',\vy',\vz_1',\ldots,\vz_k')$ holds by the above properties of
  $T_\varepsilon$, and hence $\varphi(\vx',\vy')$ also holds. We also
  have that $\vx'$ and $\vy'$ are spacelike related because
  $T_\varepsilon$ decreases the time difference but does not change
  the spatial distance.  This completes the proof of that relation
  $\lleq$ is not definable existentially from $\tleq$ in
  $(\Q^n,\tleq)$.

  The proof of that $\lleq$ is not definable existentially from
  $\sleq$ in $(\Q^n,\sleq)$ is completely analogous but using a map
  scaling time up.

  Finally, that $\sleq$ is not definable universally from $\tleq$ in
  $(\Q^n,\tleq)$ and $\tleq$ is not definable universally from $\sleq$
  in $(\Q^n,\sleq)$ follows as before from that exactly one of
  relations $\tleq$, $\lleq$, $\sleq$ and $=$ holds between any two
  points.
\end{proof}

\begin{theorem}[$n\ge 2$]\label{thm-noedefll}
  Let $n\ge 2$.  Neither timelike relatedness $\tleq$ nor spacelike
  relatedness $\sleq$ is definable existentially or universally from
  lightlike relatedness $\lleq$ in $(\Q^n,\lleq)$.
\end{theorem}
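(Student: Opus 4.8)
The plan is to reduce the universal cases to the existential ones and then to refute existential definability by a Minkowski inversion that preserves lightlike relatedness but exchanges timelike and spacelike separation. First I would observe that it suffices to show that neither $\tleq$ nor $\sleq$ is definable \emph{existentially} from $\lleq$ in $(\Q^n,\lleq)$. Indeed, a universal definition of $\tleq$ would, by negation, give an existential definition of its complement $\tlneq$; intersecting that existential formula with the quantifier-free relations $\llneq$ and $\neq$ (both trivially definable from $\lleq$) would yield an existential definition of $\sleq=\tlneq\cap\llneq\cap\neq$, and symmetrically a universal definition of $\sleq$ would give an existential definition of $\tleq$ via $\tleq=\slneq\cap\llneq\cap\neq$. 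As in the proof of Theorem~\ref{thm-noedef}, up to logical equivalence any existential formula over $(\Q^n,\lleq)$ has the form $\exists z_1\cdots\exists z_k\, B(x,y,z_1,\ldots,z_k)$ where $B$ is a Boolean combination of the atomic formulas $u\lleq v$ and $u=v$ among the variables $x,y,z_1,\ldots,z_k$.

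The key tool is the following. For $\vp,\vq\in\Q^n$ write $I(\vp,\vq)\defeq(\np_0-\nq_0)^2-(\np_1-\nq_1)^2-\cdots-(\np_{n-1}-\nq_{n-1})^2$ for the Minkowski square of $\vp-\vq$, so that $\vp\tleq\vq$, $\vp\lleq\vq$ and $\vp\sleq\vq$ hold exactly when $I(\vp,\vq)$ is positive, zero together with $\vp\neq\vq$, or negative, respectively. Fixing a center $\vo\in\Q^n$, consider the Minkowski inversion $\Phi(\vp)\defeq\vo+(\vp-\vo)/I(\vp,\vo)$, a rational self-map defined on all points off the null cone $\{\vp:I(\vp,\vo)=0\}$ of $\vo$; crucially $\Phi$ uses only the field operations, with no square roots. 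A direct bilinear computation gives the identity
\begin{equation*}
  I\big(\Phi(\vp),\Phi(\vq)\big)=\frac{I(\vp,\vq)}{I(\vp,\vo)\,I(\vq,\vo)}.
\end{equation*}
Hence $\Phi$ is an involution, injective off the null cone of $\vo$, and for any two such points $\Phi(\vp)\lleq\Phi(\vq)$ holds exactly when $\vp\lleq\vq$ does, since the quotient vanishes precisely when its numerator does. Thus $\Phi$ preserves $\lleq$ and its negation, as well as $=$ and $\neq$, on any finite set avoiding the null cone of $\vo$; and by the displayed identity it multiplies the sign of the interval $I(\vp,\vq)$ by the sign of $I(\vp,\vo)\,I(\vq,\vo)$, so it turns a timelike pair into a spacelike one precisely when $I(\vp,\vo)$ and $I(\vq,\vo)$ have opposite signs.

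With this in hand, suppose for contradiction that some such $\varphi$ defines $\tleq$. Choose timelike related $\vx\tleq\vy$ and witnesses $\vz_1,\ldots,\vz_k$ with $B(\vx,\vy,\vz_1,\ldots,\vz_k)$. I would take the center $\vo\defeq\vx+\varepsilon(0,1,0,\ldots,0)$ for a small $\varepsilon>0$: then $I(\vx,\vo)=-\varepsilon^2<0$, while $I(\vy,\vo)$ is a quadratic in $\varepsilon$ whose value at $\varepsilon=0$ is $I(\vx,\vy)>0$, hence stays positive for all small $\varepsilon$. Since each condition ``$\vz_i$ lies on the null cone of $\vo$'' is a single nontrivial polynomial equation in $\varepsilon$, and the admissible range of $\varepsilon$ is an interval of the ordered field and so infinite, one can choose $\varepsilon$ so that in addition none of $\vx,\vy,\vz_1,\ldots,\vz_k$ lies on the null cone of $\vo$. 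For this $\vo$ the map $\Phi$ preserves every atom of $B$, so $\varphi(\Phi(\vx),\Phi(\vy))$ holds; but $I(\vx,\vo)<0<I(\vy,\vo)$ forces $\Phi(\vx)\sleq\Phi(\vy)$ by the identity, contradicting that $\varphi$ defines $\tleq$. The argument for $\sleq$ is symmetric: starting from a spacelike pair one places $\vo\defeq\vx+\varepsilon(1,0,\ldots,0)$, so that $I(\vx,\vo)=\varepsilon^2>0$ while $I(\vy,\vo)$ has the negative value $I(\vx,\vy)<0$ at $\varepsilon=0$, whence $\Phi$ sends the spacelike pair to a timelike one.

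The main obstacle, and the only place requiring care, is the choice of the center $\vo$ over an \emph{arbitrary} ordered field: the inversion must avoid square roots (guaranteed by the rational formula for $\Phi$), the two sign conditions on $I(\vx,\vo)$ and $I(\vy,\vo)$ must hold on an interval of parameters, and within that interval one must dodge the finitely many null-cone equations attached to the witnesses, which is possible because an interval of an ordered field is infinite while a nonzero polynomial has only finitely many roots. One must also carry the bookkeeping that \emph{every} literal of $B$, including the negated ones $\llneq$ and $\neq$, is preserved by $\Phi$; this is exactly what the displayed identity supplies. (For $n=2$ one could instead appeal to Remark~\ref{rem-2d}, which even rules out first-order definability, but the inversion argument works uniformly for all $n\ge2$.)
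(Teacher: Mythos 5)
Your proof is correct, and it rests on the same key device as the paper's: the inversion $\vp\mapsto\vo+(\vp-\vo)/I(\vp,\vo)$ in the light cone of a center $\vo$, which preserves lightlike relatedness but flips a timelike pair to a spacelike one (and vice versa) when the two points lie on opposite sides of the cone of $\vo$. The difference is in how the technical obstacle --- witnesses $\vz_1,\ldots,\vz_k$ possibly landing on the null cone of the center --- is handled. The paper fixes the center at the origin, cites the interval-ratio property from the literature, and then normalizes the configuration: it uses Proposition~\ref{prop-aut} to place $\vp,\vq$ in standard position in the \txplane and iterates a scaling-plus-translation automorphism to push all witnesses off the light cone of the origin, arguing that a line meets that cone at most twice so each point becomes problematic at most twice. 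You instead leave the configuration untouched and perturb the \emph{center}, choosing $\vo=\vx+\varepsilon e$ with $e$ spacelike or timelike as needed, so that $I(\vx,\vo)$ and $I(\vy,\vo)$ have opposite signs on a whole parameter interval and the finitely many null-cone conditions on the witnesses, each a nonzero quadratic in $\varepsilon$, can be dodged. Your route buys two things: the explicit identity $I(\Phi(\vp),\Phi(\vq))=I(\vp,\vq)/\big(I(\vp,\vo)I(\vq,\vo)\big)$ makes the preservation of every literal of $B$ (including $\llneq$ and $\neq$) self-contained rather than outsourced, and --- more substantively --- you avoid Proposition~\ref{prop-aut} altogether, so your argument visibly works over an arbitrary ordered field for all $n\ge2$, whereas the paper's normalization step invokes a transitivity result whose hypotheses include Euclideanness when $n>2$. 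The paper's version, in exchange, is more geometric and reuses machinery already set up earlier. Your reduction of the universal cases to the existential ones via $\sleq=\tlneq\cap\llneq\cap\neq$ and $\tleq=\slneq\cap\llneq\cap\neq$ matches the paper's.
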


We have already seen that Theorem~\ref{thm-noedefll} holds in case
$n=2$, see Remark~\ref{rem-2d}. Now we are going to give a proof that
works for all $n\ge2$.

\begin{proof}
  As in the proof of Theorem~\ref{cor-nre2} and for the very same
    reason, is enough to show the nonexistence of existential
    definitions.
  To show the nonexistence of existential definitions, let us consider
  the following map:
  $$h:(\nr_0,\nr_1,\ldots,\nr_{n-1})\mapsto
  \frac{1}{\nr_0^2-\nr_1^2-\ldots -\nr_{n-1}^2}
  (\nr_0,\nr_1,\ldots,\nr_{n-1}).$$
  It is known, see \eg \cite{Lester}, and also straightforward to
  check that $h$ is a bijection of the complement of the light cone
  through the origin $\vo=(0,0,\ldots,0)$ to itself and $h$ maps
  lightlike related pairs of points to lightlike related ones. By the
  definition of $h$, we also have that $\vr$, $\vo$ and $h(\vr)$ are
  on the same line; $h$ leaves the hyperboloid defined by equation
  $\nr_0^2-\nr_1^2-\ldots -\nr_{n-1}^2=1$ pointwise fixed; $h$ maps
  every point of the hyperboloid defined by equation
  $\nr_0^2-\nr_1^2-\ldots -\nr_{n-1}^2=-1$ to its opposite; and $h$
  maps the region inside these hyperboloids to the region outside of
  them.

  \begin{figure}[!htb]
    \centering \begin{tikzpicture}[scale=0.75]

\def\r{0.07}

\pgfmathsetmacro{\e}{sqrt(2)}   
\pgfmathsetmacro{\a}{sqrt(2)}
\pgfmathsetmacro{\b}{(\a*sqrt((\e)^2-1)} 

\pgfmathsetmacro{\c}{1/sqrt(3)}

\begin{scope}[]
\draw[gray] (-3.36,0) to (3.36,0);
\draw[gray] (0,-3.36) to (0,3.36);
\draw[gray,thick, dashed] (-3.2,-3.2) to (3.2,3.2);
\draw[gray,thick, dashed] (-3.2,3.2) to (3.2,-3.2);

\fill[blue!42]   (0.1,\a) to  (0,\a) to(0,3.36) to (3.08,3.36) ;
\fill[blue!42]   plot[domain=0:1.51] ({\a*sinh(\x)},{\b*cosh(\x)});

\fill[red!42]  (0,\a) to (0,0) to (3.2,3.2) to (3.05,3.35) to (2,2.45) to (1,1.72) to (0.5,1.5);

\draw[thick] plot[domain=-1.51:1.51] ({\a*sinh(\x)},{\b*cosh(\x)});
\draw[thick] plot[domain=-1.51:1.51] ({\a*sinh(\x)},{-\b*cosh(\x)});
\draw[thick] plot[domain=-1.51:1.51] ({-\b*cosh(\x)},{\a*sinh(\x)});    
\draw[thick] plot[domain=-1.51:1.51] ({\b*cosh(\x)},{\a*sinh(\x)});    

\draw[dotted] (0,0) to (-1.68,-3.36);
\draw[] (0,0) to (1.68,3.36);
\draw[gray,fill=white] (0,0) circle [radius=\r];

\coordinate (a) at (\c*\a,2*\c*\a) ;

\draw[blue, very thick] (\a,0) to node[right]{$\tleq$} (\a,2*\a);
\draw[red, very thick] (-\a,0) to node[above]{$\sleq$} (\a/3,2*\a/3);

\draw[fill] (\a,2*\a) node[below right]{$\vq$} circle [radius=\r];
\draw[fill] (\a/3,2*\a/3) node[below right]{$h(\vq)$} circle [radius=\r];
\draw[fill] (\a,0) node[below right] {$\vp$} circle [radius=\r];
\draw[fill] (-\a,0) node[below left] {$h(\vp)$} circle [radius=\r];

\end{scope}

\begin{scope}[shift={(9,0)}]
\draw[gray] (-3.36,0) to (3.36,0);
\draw[gray] (0,-3.36) to (0,3.36);
\draw[gray,dashed,thick] (-3.2,-3.2) to (3.2,3.2);
\draw[gray, dashed,thick] (-3.2,3.2) to (3.2,-3.2);

\fill[red!42]  (\a,0.1) to (\a,0) to (3.36,0) to (3.36,3.08) --cycle ;
\fill[red!42]   plot[domain=0:1.51] ({\b*cosh(\x)},{\a*sinh(\x)});

\fill[blue!42]  (-\a,0) to (0,0) to (-3.2,-3.2) to (-3.35,-3.05) to (-2.45,-2) to (-1.72,-1) to (-1.5,-0.5);

\draw[thick] plot[domain=-1.51:1.51] ({\a*sinh(\x)},{\b*cosh(\x)});
\draw[thick] plot[domain=-1.51:1.51] ({\a*sinh(\x)},{-\b*cosh(\x)});
\draw[thick] plot[domain=-1.51:1.51] ({-\b*cosh(\x)},{\a*sinh(\x)});    
\draw[thick] plot[domain=-1.51:1.51] ({\b*cosh(\x)},{\a*sinh(\x)});    

\coordinate (a) at (2*\c*\a,\c*\a) ;
\coordinate (-a) at (-2*\c*\a,-\c*\a) ;

\draw[] (a) to (3.36,1.68);
\draw[] (-a) to (0,0);
\draw[dotted] (-3.36,-1.68) to (3.36,1.68);
\draw[gray,fill=white] (0,0) circle [radius=\r];

\draw[blue, very thick] (0,\a) to node[left]{$\tleq$}  (-2*\a/3,-\a/3);
\draw[red, very thick] (0,\a) to node[above]{$\sleq$}  (2*\a,\a);

\draw[fill] (2*\a,\a) node[below right]{$\vq$} circle [radius=\r];
\draw[fill] (-2*\a/3,-\a/3) node[below right]{$h(\vq)$} circle [radius=\r];
\draw[fill] (0,\a) node[below right] {$\vp=h(\vp)$} circle [radius=\r];

\end{scope}

\end{tikzpicture}
    \caption{The figure illustrates that $h$ interchanges the
      relations $\tleq$ and $\sleq$ between points $\vp$ and $\vq$ if
      they are from the appropriate regions given in the proof of
      Theorem~\ref{thm-noedefll}.\label{fig-HyperbInv}}
  \end{figure}
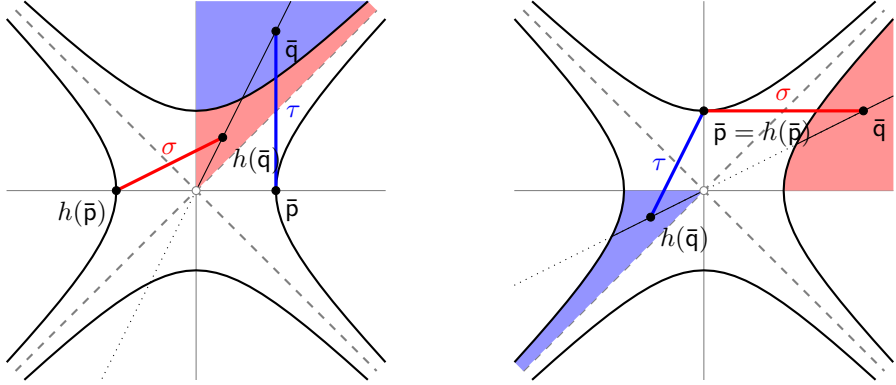
  
  Let  $\varphi(p,q)$ be an arbitrary existential formula in
  the language of $(\Q^n,\lleq)$. Then $\varphi(p,q)$ is of the form
  $\exists z_1\exists z_2\ldots\exists z_k\; B(p,q,z_1,\ldots,z_k)$,
  where $B$ is a Boolean combination of relations $\lleq$ and $=$
  between variables $z_1,\ldots,z_k$, $p$ and $q$.

  Using the fact that $h$ is a bijection preserving $\lleq$, we are
  going to show that the relation defined by $\varphi$ holds for some
  timelike related points if{}f it holds for some spacelike related
  points. So $\varphi$ can define neither $\tleq$ nor $\sleq$.

  Let $\vp$ and $\vq$ be points such that $\varphi(\vp,\vq)$
  holds. Then there are points $\vz_1,\ldots,\vz_k$ such that
  $B(\vp,\vq,\vz_1,\ldots,\vz_k)$ holds. Using automorphisms of
  $(\Q^n,\lleq)$, we are going to show that we can assume, without
  loosing generality, that
  \begin{enumerate}
  \item\label{unu} points $\vz_1,\ldots,\vz_k$, $\vp$, and $\vq$ are
    all in the domain of $h$,
  \item\label{du} if $\vp\tleq\vq$, then $\vp=(0,1,0,\ldots,0)$ and
    there is a $\nt>0$ such that $\vq=(\nt,1,0,\ldots,0)$ and
    $\nt^2-1>1$ (\ie $\vq$ is above the upper half of the hyperbola
    $\nt^2-\nx^2=1$ in the \txplane),
  \item\label{tri} if $\vp\sleq\vq$, then $\vp=(1,0,\ldots,0)$ and
    there is an $\nx>0$ such that $\vq=(1,\nx,0,\ldots,0)$ and
    $1-\nx^2<-1$ (\ie $\vq$ is on the right of the right half of the
    hyperbola $\nt^2-\nx^2=-1$ in the \txplane).
  \end{enumerate}
  In this setting, the $h$-images of $\vp$ and $\vq$ are timelike if
  $\vp$ and $\vq$ are spacelike related, and they are spacelike
  related if $\vp$ and $\vq$ are timelike related, see
  Figure~\ref{fig-HyperbInv}. Since $h$ preserves the $\lleq$,
  $\llneq$, $=$ and $\neq$ relations between points
  $\vz_1,\ldots,\vz_k$, $\vp$, and $\vq$, this shows that there are
  timelike related points in $\varphi$ relation exactly if there are
  spacelike related ones in $\varphi$ relation.
  
  The only remaining thing to show is that points
  $\vz_1,\ldots,\vz_k$, $\vp$, and $\vq$ can indeed be transformed by
  an automorphism of $(\Q^n,\lleq)$ satisfying \ref{unu}, \ref{du} and
  \ref{tri}. By Proposition~\ref{prop-aut}, we can transform them such
  that $\vp$ and $\vq$ are situated appropriately in the \txplane. The
  only problem is that maybe some of points $\vz_1,\ldots,\vz_k$ are
  not in the domain of $h$. If that is so, then by applying an
  appropriate automorphism $\alpha$ enough many time, we can move
  everything into the domain of $h$ while keeping points $\vp$ and
  $\vq$ in the right places.

  In case $\vp\tleq\vq$, we can choose this automorphism $\alpha$ to
  be the composition of a uniform scaling (say by factor 2) and an
  appropriate horizontal translation (in the negative direction along
  the $\nx$-axis). Of course one such step may create new problematic
  points, but iterating this step will put every points to the right
  place after finitely many iterations because there are only finitely
  many points to deal with and a point cannot become problematic more
  than twice. The later is so because the sequence of points $\vz$,
  $\alpha(\vz)$, $\alpha\big(\alpha(\vz)\big)$, \ldots are different
  points on the same line, and a line cannot intersect the set of
  problematic points (\ie the light cone through the origin) more than
  twice.\footnote{More precisely, the considered automorphism $\alpha$
    maps point $\vz=(z_0,z_1,z_2,\ldots z_{n-1})$ to
    $(2z_0,2z_1-1,2z_2,\ldots, 2z_{n-1})$. Hence, points $\vz$,
    $\alpha(\vz)$ and $\alpha\big(\alpha(\vz)\big)$ are on the same
    line because
    $\alpha\big(\alpha(\vz)\big)-\alpha(\vz)=2\big(\alpha(\vz)-\vz\big)$,
    which can be checked by a straightforward calculation. Checking
    that this $\alpha$ keeps points $\vp$ and $\vq$ in the right
    places is also straightforward.}

  In case $\vp\sleq\vq$, the same idea works but here after scaling up
  we should translate the points downwards (\ie in the negative
  direction along the $\nt$-axis).
\end{proof}

Based on the definitions given by Winnie~\cite{winnie}, we get the
following formulas which respectively define $\lleq$ and $\tleq$ from
$\sleq$:
\begin{align*}
  \wstol(x,y) &\defeq x\slneq y\kaj x\neq y\kaj \forall u\forall v
  \exists z_u\exists z_v (z_u\slneq\sleq\sleq uxy \lor
  z_v\slneq\sleq\sleq vxy\lor u\slneq v)\\ \wstot(x,y) &\defeq \lnot
  \wstol(x,y) \kaj x\slneq y \kaj x\neq y.
\end{align*}

\begin{theorem}[$n\ge 2$; \ref{eq-Eucl} or $n=2$]\label{thm-w1}
  Assume that $n=2$ or that $(\Q,+,\cdot,\le)$ is a Euclidean field.
  Then in model $(\Q^n,\sleq)$, lightlike relatedness $\lleq$ can be
  defined from spacelike relatedness $\sleq$ using only 6 variables by
  $\forall^2\exists^2$-formula $\wstol$. Hence timelike relatedness
  $\tleq$ can be also defined from $\sleq$ using only 6 variables by
  $\exists^2\forall^2$-formula $\wstot$.
\end{theorem}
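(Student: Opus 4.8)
The plan is to unfold the quantifier prefix of $\wstol$ into a single geometric condition on causal cones and then check that condition on the two representative configurations provided by Proposition~\ref{prop-aut}. Since in $\wstol$ the variable $z_u$ occurs only in the disjunct $z_u\slneq\sleq\sleq uxy$ and $z_v$ only in $z_v\slneq\sleq\sleq vxy$, the matrix $\exists z_u\exists z_v(z_u\slneq\sleq\sleq uxy\lor z_v\slneq\sleq\sleq vxy\lor u\slneq v)$ is logically equivalent to $P(u)\lor P(v)\lor u\slneq v$, where $P(u)$ abbreviates $\exists z(z\slneq u\kaj z\sleq x\kaj z\sleq y)$. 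A witness $z$ for $P(u)$ is a point causally related or equal to $u$ (\ie in $\Ccone_u$) that is spacelike related to both $x$ and $y$ (\ie outside $\Ccone_x\cup\Ccone_y$), so $P(u)$ holds exactly when $\Ccone_u\not\subseteq\Ccone_x\cup\Ccone_y$. Hence, for points $\vx,\vy$ and with $S\defeq\{\vektoro{w}\in\Q^n:\Ccone_{\vektoro w}\subseteq\Ccone_{\vx}\cup\Ccone_{\vy}\}$, the prefix $\forall u\forall v(P(u)\lor P(v)\lor u\slneq v)$ asserts precisely that $S$ contains no two spacelike related points. I would therefore record the reformulation: $\wstol(\vx,\vy)$ holds if{}f $\vx\slneq\vy$, $\vx\neq\vy$ and $S$ has no spacelike related pair. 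As the first two conjuncts already force $\vx,\vy$ to be timelike or lightlike related, it remains to show that $S$ has a spacelike related pair when $\vx\tleq\vy$ and none when $\vx\lleq\vy$. Throughout I will use the Minkowski form $\langle\vp,\vq\rangle\defeq\np_0\nq_0-\sum_{i\ge1}\np_i\nq_i$ and $Q_{\vektoro a}(\vp)\defeq\langle\vp-\vektoro a,\vp-\vektoro a\rangle$, so that $\vp\slneq\vektoro a$ means $Q_{\vektoro a}(\vp)\ge0$ and $\vp\sleq\vektoro a$ means $Q_{\vektoro a}(\vp)<0$.

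Because the relation defined by $\wstol$ is automorphism invariant and, by Proposition~\ref{prop-aut}, the automorphisms act transitively on timelike and on lightlike related pairs, it suffices to verify the reformulation on one representative of each kind. For the timelike case take $\vx=(0,\ldots,0)$ and $\vy=(\nt,0,\ldots,0)$ with $\nt>0$, and consider the two points $\vektoro w_{\pm}=(\nt/2,\pm\nt/2,0,\ldots,0)$, each lightlike related to both $\vx$ and $\vy$ and spacelike related to each other. I claim both lie in $S$, which then makes $\wstol(\vx,\vy)$ false. To show $\vektoro w_+\in S$ I would argue by contradiction from a point $\vp$ with $Q_{\vektoro w_+}(\vp)\ge0$, $Q_{\vx}(\vp)<0$ and $Q_{\vy}(\vp)<0$; writing $s\defeq\np_0-\np_1$, $d\defeq\np_0+\np_1-\nt$ and $\rho^2\defeq\sum_{i\ge2}\np_i^2$, these three conditions read $sd\ge\rho^2$, $s(d+\nt)<\rho^2$ and $(s-\nt)d<\rho^2$, which force $s<0$, then $d>0$, and finally $0\le\rho^2\le sd<0$, a contradiction. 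Thus no point of $\Ccone_{\vektoro w_+}$ is spacelike related to both $\vx$ and $\vy$, \ie $\vektoro w_+\in S$, and $\vektoro w_-\in S$ follows by the reflection $\np_1\mapsto-\np_1$. This computation is uniform in $n$, so the Euclidean (or $n=2$) hypothesis is used only through Proposition~\ref{prop-aut}.

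For the lightlike case take $\vx=(0,\ldots,0)$ and $\vy=\nt\ell$ with the null vector $\ell=(1,1,0,\ldots,0)$ and $\nt>0$, and let $L\defeq\{\lambda\ell:\lambda\in\Q\}$ be the lightlike line through $\vx,\vy$; since any two distinct points of $L$ are lightlike related, it suffices to prove $S\subseteq L$. The heart of the argument is that $\vektoro w\in S$ forces $\langle\ell,\vektoro w\rangle=0$, \ie $\nombro w_0=\nombro w_1$. I would prove this by escaping to infinity along $\ell$: for an offset $\vektoro c$ with $\langle\ell,\vektoro c\rangle$ a suitable small nonzero scalar, the points $\vp(M)=\pm M\ell+\vektoro c$ make $Q_{\vx}(\vp(M))$ and $Q_{\vy}(\vp(M))$ \emph{affine} in $M$ (because $\langle\ell,\ell\rangle=0$) with leading coefficient a nonzero multiple of $\langle\ell,\vektoro c\rangle$, so choosing the sign of $\langle\ell,\vektoro c\rangle$ appropriately puts $\vp(M)$ outside $\Ccone_{\vx}\cup\Ccone_{\vy}$ for all large $M$; meanwhile $Q_{\vektoro w}(\vp(M))$ is affine in $M$ with leading coefficient $\pm2(\langle\ell,\vektoro c\rangle-\langle\ell,\vektoro w\rangle)$, so if $\langle\ell,\vektoro w\rangle\neq0$ then, picking the sign of the family together with $\vektoro c$ so that $\langle\ell,\vektoro c\rangle$ lies strictly between $0$ and $\langle\ell,\vektoro w\rangle$, the point $\vp(M)$ becomes causally related to $\vektoro w$ for large $M$, contradicting $\vektoro w\in S$. (Over an arbitrary ordered field this is legitimate, since an affine function with nonzero leading coefficient is eventually of that sign.) Finally, $\vektoro w\in\Ccone_{\vektoro w}\subseteq\Ccone_{\vx}\cup\Ccone_{\vy}$ gives $Q_{\vx}(\vektoro w)\ge0$ or $Q_{\vy}(\vektoro w)\ge0$; but once $\nombro w_0=\nombro w_1$ both of these equal $-\sum_{i\ge2}\nombro w_i^2\le0$, so $\sum_{i\ge2}\nombro w_i^2=0$ and $\vektoro w\in L$, as required.

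The second statement is then immediate: among distinct causally related pairs, $\wstot=\lnot\wstol\kaj x\slneq y\kaj x\neq y$ selects exactly the non-lightlike ones, \ie timelike relatedness $\tleq$; and since $\wstol$ is, after pulling its quantifier-free conjuncts across the prefix, a $\forall^2\exists^2$-formula in the six variables $x,y,u,v,z_u,z_v$, its negation $\wstot$ is an $\exists^2\forall^2$-formula in the same six variables. The step I expect to be the main obstacle is the implication $\vektoro w\in S\Rightarrow\langle\ell,\vektoro w\rangle=0$ in the lightlike case: because $\Ccone_{\vx}$ and $\Ccone_{\vy}$ share the same cone at infinity, the containment $\Ccone_{\vektoro w}\subseteq\Ccone_{\vx}\cup\Ccone_{\vy}$ is an essentially asymptotic constraint concentrated on the single null direction $\ell$, and one must choose the escaping family $\vp(M)$ and the sign of $\langle\ell,\vektoro c\rangle$ with care so that the points provably leave $\Ccone_{\vx}\cup\Ccone_{\vy}$ while entering $\Ccone_{\vektoro w}$.
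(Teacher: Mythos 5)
Your proof is correct, and it reaches the same two key geometric facts as the paper's proof, but by a genuinely different route. The reformulation of the $\forall^2\exists^2$ prefix as ``the set $S$ of points $\vektoro w$ with $\Ccone_{\vektoro w}\subseteq\Ccone_{\vx}\cup\Ccone_{\vy}$ contains no spacelike related pair'' is exactly the right reading of $\wstol$, and your two case analyses match the paper's targets: for timelike $\vx,\vy$ the paper also exhibits two spacelike related points on the intersection of the light cones of $\vx$ and $\vy$ (your $\vektoro w_\pm$) and argues via transitivity of causal past/future that their causal cones lie in $\Ccone_{\vx}\cup\Ccone_{\vy}$, where you instead verify this by an explicit computation in the null coordinates $s,d$ --- both are fine, and yours has the virtue of being a three-line inequality check. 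The real divergence is in the lightlike case: the paper slices the light cone of a candidate $\vu$ by the two horizontal hyperplanes through $\vx$ and $\vy$ and finds a witness inside one of the resulting spheres unless $\vu$ lies on the segment $\vx\vy$ (a ``straightforward to check'' step that leans on the Euclidean hypothesis through the sphere geometry), whereas you prove the weaker but still sufficient containment $S\subseteq L$ by escaping to infinity along the null direction $\ell$, exploiting that $Q_{\vx}$, $Q_{\vy}$ and $Q_{\vektoro w}$ become affine in the parameter $M$ with controllable leading coefficients, and then pinning down the transverse coordinates via $\vektoro w\in\Ccone_{\vektoro w}$. Your asymptotic argument is purely order-field-theoretic once Proposition~\ref{prop-aut} has normalized the pair, so it is arguably more rigorous than the paper's sphere picture and isolates cleanly where the Euclidean (or $n=2$) hypothesis enters, namely only through the normalization. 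The concluding derivation of $\wstot$ and the count of quantifiers and variables are handled the same way in both proofs.
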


\begin{proof}
  Let us first show that if points $\vx$ and $\vy$ are lightlike
  related, then they satisfy formula $\wstol$. So let $\vx$ and $\vy$
  be arbitrary two lightlike related points. Since then $\vx$ and
  $\vy$ are clearly neither equal nor spacelike related, we only have
  to show that, for all points $\vu$ and $\vv$, there are points
  $\vz_u$ and $\vz_v$ such that $\vz_u\slneq\vu$,
  $\vz_u\sleq\vx$ and $\vz_u\sleq \vy$ hold, $\vz_v\slneq\vv$,
  $\vz_v\sleq\vx$ and $\vz_v\sleq \vy$ hold or $\vu$ and $\vv$ are not
  spacelike related. Unless $\vu$ is in the segment $\vx\vy$, there is
  an appropriate $\vz_u$. This can be checked by considering the
  horizontal hyperplanes through $\vx$ and $\vy$, see the left hand side
  of Figure~\ref{fig-w2}. The light cone through $\vu$ intersects these
  hyperplanes in two spheres (at most one of which is degenerate as
  $\vu$ cannot be in both hyperplanes). It is straightforward to check
  that there is a point $\vz_u$ inside one of these spheres which is
  spacelike related to both $\vx$ and $\vy$, unless $\vu$ is in the
  lightlike segment $\vx\vy$.
  \begin{figure}[!htb]
    \centering \begin{tikzpicture}[scale=0.7]

\pgfmathsetmacro{\conesize}{2}

\newcommand{\lcone}[6]{
\draw[red]   (#1-#5,#2+#5) to (#1,#2) to (#1+#5,#2+#5);
\fill[#4, opacity=.2] (#1,#2) to (#1-#5,#2+#5) to (#1+#5,#2+#5) to cycle ;

\draw[red, fill=#4, fill opacity=0.2] (#1+#5,#2+#5) arc [start angle=0,end angle=180,x radius=#5, y radius=.1*#5];
\draw[red] (#1-#5,#2+#5) arc [start angle=180,end angle=360,x radius=#5, y radius=.1*#5];

\draw[red]   (#1-#6,#2-#6) to (#1,#2) to (#1+#6,#2-#6);

\fill[#4, opacity=.2] (#1,#2) to (#1-#6,#2-#6) to (#1+#6,#2-#6) to cycle ;

\draw[dashed, red] (#1+#6,#2-#6) arc [start angle=0,end angle=180,x radius=#6, y radius=.1*#6];
\draw[red, fill=#4, fill opacity=0.2] (#1-#6,#2-#6) arc [start angle=180,end angle=360,x radius=#6, y radius=.1*#6];

\draw[fill] (#1,#2)  circle [radius=0.05] node[left]{#3};
}

\newcommand{\clcone}[7]{
\draw[#7]   (#1-#5,#2+#5) to (#1,#2) to (#1+#5,#2+#5);
\fill[#4, opacity=.2] (#1,#2) to (#1-#5,#2+#5) to (#1+#5,#2+#5) to cycle ;

\draw[#7, fill=#4, fill opacity=0.2] (#1+#5,#2+#5) arc [start angle=0,end angle=180,x radius=#5, y radius=.1*#5];
\draw[#7] (#1-#5,#2+#5) arc [start angle=180,end angle=360,x radius=#5, y radius=.1*#5];

\draw[#7]   (#1-#6,#2-#6) to (#1,#2) to (#1+#6,#2-#6);

\fill[#4, opacity=.2] (#1,#2) to (#1-#6,#2-#6) to (#1+#6,#2-#6) to cycle ;

\draw[dashed, #7] (#1+#6,#2-#6) arc [start angle=0,end angle=180,x radius=#6, y radius=.1*#6];
\draw[#7, fill=#4, fill opacity=0.2] (#1-#6,#2-#6) arc [start angle=180,end angle=360,x radius=#6, y radius=.1*#6];

\draw[fill] (#1,#2)  circle [radius=0.05] node[left]{#3};
}

\begin{scope}[shift={(-10,-.5)}]


\lcone{0}{0}{$\vx$}{white}{2}{1.45}
\draw[gray, fill opacity=0.4, fill=brown] (-2,2.5) to (5.5,2.5)   to (4.5,1.5)  to (-3,1.5) to cycle;

\draw[gray, fill opacity=0.4, fill=brown] (-2,0.5) to (5.5,0.5)   to (4.5,-0.5)  to (-3,-0.5) to cycle;

\draw[gray, fill=white] (\conesize,\conesize) arc [start angle=0,end angle=360,x radius=\conesize, y radius=.1*\conesize];
\lcone{2.05}{2.05}{$\vy$}{white}{0.55}{2}
\draw[gray, fill=white] (2.05+\conesize,2.05-\conesize) arc [start angle=0,end angle=360,x radius=\conesize, y radius=.1*\conesize];

\draw[black, fill=red, opacity=0.4] (4,2.05) arc [start angle=0,end angle=360,x radius=1.5, y radius=.15];

\draw[black, fill=red, opacity=0.4] (3,.05) arc [start angle=0,end angle=360,x radius=.5, y radius=.05];

\lcone{2.5}{.55}{$\vu$}{red}{2}{2}

\draw[fill] (3,2.05)  circle [radius=0.05] node[right]{$\vz_u$};

\end{scope}

\draw[dotted, gray] (-3.7,-2.5) to (-3.7,2.5);

\lcone{-1}{0}{$\vu$}{white}{2}{2}
\node[right] at (1,0) {$\vv$};
\clcone{0}{-1}{$\vx$}{blue}{3}{0}{black}
\clcone{0}{1}{$\vy$}{blue}{0}{3}{black}

\draw[white, fill=white] (0,.98) to (0.97,0) to (0,-.97) to (-0.98,0);
\draw[black, fill=blue, opacity=0.2] (0,1) to (1,0) to (0,-1) to (-1,0) to (0,1);

\draw[black,dashed] (1,0) arc [start angle=0,end angle=180,x radius=1, y radius=.1];
\draw[black] (1,0) arc [start angle=0,end angle=-180,x radius=1, y radius=.1];

\draw[fill] (0,1)  circle [radius=0.05];
\draw[fill] (0,-1)  circle [radius=0.05];
\draw[fill] (1,0)  circle [radius=0.05];
\draw[fill] (-1,0)  circle [radius=0.05];

\end{tikzpicture}
    \caption{\label{fig-w2}}
  \end{figure}
  Similarly, unless $\vv$ is in the segment $\vx\vy$, there is an
  appropriate $\vz_v$.  If both $\vu$ and $\vv$ are in the lightlike
  segment $\vx\vy$, then they are either equal or lightlike related,
  and hence $\vu\slneq\vv$ holds. So lightlike relatedness implies
  $\wstol$-relatedness.

  Let us now show that if points $\vx$ and $\vy$ are $\wstol$-related,
  then they have to be lightlike related. Clearly, $\vx$ and $\vy$
  have to be different and cannot be spacelike related. Thus, we only
  have to show that they cannot be timelike related. So let $\vx$ and
  $\vy$ be two arbitrary timelike related points.  We should show that
  they are not $\wstol$-related.

  By Proposition~\ref{prop-aut}, we can assume that $\vx$ and $\vy$
  are vertically related and $\vy$ is in the (causal) future of
  $\vx$. To show that they are not $\wstol$-related, we should find
  spacelike related points $\vu$ and $\vv$ such that at least one of
  $\vz_u\sleq\vu$, $\vz_u\slneq\vx$ and $\vz_u\slneq\vy$ holds and at
  least one of $\vz_v\sleq\vv$, $\vz_v\slneq\vx$ and $\vz_v\slneq\vy$
  holds for all points $\vz_u$ and $\vz_v$. Let $\vu$ and $\vv$ be
  spacelike related points such that they are both in the lightlike
  past of $\vy$ and in the lightlike future of $\vx$. There are such
  points, see the right hand side of Figure~\ref{fig-w2}. Then the
  causal future of $\vu$ is in the causal future of $\vx$ and the
  causal past of $\vu$ is in the causal past of $\vy$ by the
  transitivity of the causal past and future relations. Hence, if
  $\vz\slneq \vu$, then $\vz\slneq \vx$ or $\vz\slneq\vy$. So every
  point is $\slneq$-related to $\vx$ or $\vy$, or it is
  $\sleq$-related to $\vu$; and the same holds for $\vv$. Thus $\vx$
  and $\vy$ are not $\wstol$-related, and this is what we wanted to
  prove.

  From this, it follows that $\wstot$ defines timelike relatedness as
  in the previous proofs because exactly one of relations $\tleq$,
  $\lleq$, $\sleq$ and $=$ holds.  
\end{proof}

Let $\wstols$ and $\wstots$ be the formulas that we respectively get
from $\wstol$ and $\wstot$ when replacing $\sleq$ with $\tleq$. By the
isomorphism of $(\Q^2,\tleq,\lleq)$ and $(\Q^2,\sleq,\lleq)$, the
following is a corollary of Theorem~\ref{thm-w1}:

\begin{corollary}[$n=2$]\label{thm-w2}
  Let $(\Q,+,\cdot,\le)$ be an arbitrary ordered field.
  Then in model $(\Q^2,\tleq)$, lightlike relatedness $\lleq$ can be
  defined from timelike relatedness $\tleq$ using only 6 variables by
  $\forall^2\exists^2$-formula $\wstols$. Hence spacelike relatedness
  $\sleq$ can be also defined from $\tleq$ using only 6 variables by
  $\exists^2\forall^2$-formula $\wstots$.
\end{corollary}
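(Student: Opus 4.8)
The plan is to derive Corollary~\ref{thm-w2} from Theorem~\ref{thm-w1} purely by transport of structure along the coordinate-swap isomorphism already recorded in Remark~\ref{rem-2d}. First I would verify that the map $\alpha:(\nt,\nx)\mapsto(\nx,\nt)$ is an isomorphism from $(\Q^2,\sleq)$ onto $(\Q^2,\tleq)$ that additionally carries $\lleq$ to $\lleq$. Indeed, for $n=2$ we have $\vp\sleq\vq$ exactly when $(\np_1-\nq_1)^2>(\np_0-\nq_0)^2$, which is precisely the condition $\alpha(\vp)\tleq\alpha(\vq)$; symmetrically $\alpha$ sends $\tleq$ to $\sleq$, and since the lightlike condition is symmetric under interchanging the time and space coordinates, $\alpha$ fixes $\lleq$ setwise.

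The key step then uses the standard fact that isomorphisms preserve satisfaction of first-order formulas. Because $\wstols$ and $\wstots$ are by definition obtained from $\wstol$ and $\wstot$ by substituting the relation symbol $\tleq$ for $\sleq$, evaluating $\wstols$ in $(\Q^2,\tleq)$ is literally the same as evaluating $\wstol$ in the structure whose single relation symbol is interpreted as $\tleq$. Since $\alpha\colon(\Q^2,\sleq)\to(\Q^2,\tleq)$ is an isomorphism, for all $\vp,\vq\in\Q^2$ we have $(\Q^2,\sleq)\models\wstol(\vp,\vq)$ if and only if $(\Q^2,\tleq)\models\wstols(\alpha(\vp),\alpha(\vq))$, and likewise with $\wstot$ and $\wstots$.

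Putting these together finishes both claims. By Theorem~\ref{thm-w1}, $\wstol$ defines $\lleq$ in $(\Q^2,\sleq)$, so $(\Q^2,\tleq)\models\wstols(\alpha(\vp),\alpha(\vq))$ holds exactly when $\vp\lleq\vq$, which by the $\lleq$-invariance of $\alpha$ is exactly when $\alpha(\vp)\lleq\alpha(\vq)$; since $\alpha$ is a bijection, $\wstols$ defines $\lleq$ in $(\Q^2,\tleq)$. Likewise $\wstot$ defines $\tleq$ in $(\Q^2,\sleq)$, and because $\alpha$ carries $\tleq$ to $\sleq$, the same transport shows that $\wstots$ defines $\sleq$ in $(\Q^2,\tleq)$. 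The variable count and quantifier prefix are untouched by a relation-symbol substitution, so $\wstols$ remains a $6$-variable $\forall^2\exists^2$-formula and $\wstots$ a $6$-variable $\exists^2\forall^2$-formula.

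There is no genuine obstacle here: the entire content is bookkeeping of which relation $\alpha$ sends where, and the only point requiring a moment's care is that the defined relation for the first formula, namely $\lleq$, is the one \emph{fixed} by $\alpha$, whereas for the second formula the defined relation $\tleq$ is the one \emph{swapped} to $\sleq$. This asymmetry is exactly why the $\lleq$-definition stays a $\lleq$-definition while the $\tleq$-definition turns into an $\sleq$-definition.
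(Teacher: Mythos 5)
Your proposal is correct and follows exactly the paper's route: the paper derives Corollary~\ref{thm-w2} from Theorem~\ref{thm-w1} via the coordinate-swap isomorphism $\alpha:(\nt,\nx)\mapsto(\nx,\nt)$ of Remark~\ref{rem-2d}, which interchanges $\tleq$ and $\sleq$ while fixing $\lleq$, and you have simply spelled out the transport-of-satisfaction bookkeeping that the paper leaves implicit. Your closing observation about why the $\lleq$-definition stays put while the $\tleq$-definition becomes an $\sleq$-definition is exactly the right point of care.
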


\begin{remark}
   Even if the field $(\Q,+,\cdot,\le)$ is Euclidean, the statement of
   Corollary~\ref{thm-w2} does not hold if the dimension $n>2$. In
   this case, formulas $\wstols$ and $\wstots$ respectively define
   relations $\tlneq_{\tinyneq}$ and $\emptyset$. This is so because
   formula $$\phi(x,y):=\forall u\forall v \exists z_u\exists z_v
   (z_u\tlneq\tleq\tleq uxy \lor z_v\tlneq\tleq\tleq vxy\lor u\tlneq
   v)$$ defines $\tlneq$, which is the union of relations $\sleq$,
   $\lleq$ and $=$. That the relation defined by $\phi$ contains
   $\sigma$ can be shown based on the following observation: If point
   $\vu$ is not simultaneous to horizontally related points $\vx$ and
   $\vy$, then after (or before) a certain time the horizontal slices
   of the light cone through $\vu$ do not cover the corresponding
   horizontal slices of the intersection of the light cones through
   $\vx$ and $\vy$, and hence there is an appropriate point $\vz_u$
   for which $\vz_u\tlneq\tleq\tleq \vu\vx\vy$ holds.
\end{remark}

\section{Concluding Remarks and Open Problems}
\label{sec-open}

We have carefully investigated the interdefinability between timelike,
lightlike and spacelike relatedness of Minkowski spacetime in various
dimensions over Euclidean fields and in some cases over arbitrary
ordered fields. We aimed to find the simplest possible definitions. We
have shown that in terms of number of variables 4 is minimal, but in
terms of quantifier complexity, we left some natural questions open.

For all $n\ge2$, it is open if universal-existential definitions
$\wstots$ and $\wstot$ (and existential-universal definitions
$\wstols$ and $\wstol$) are minimal in the number of used variables;
in other words, it is open if there are corresponding
universal-existential (existential-universal) definitions using fewer
quantifiers. In the case $n>2$, it is open if there is an
existential-universal definition of timelike relatedness from
lightlike relatedness which works (at least) over Euclidean ordered
field, or equivalently if there is a universal-existential definition
of spacelike relatedness from lightlike relatedness.

\begin{table}[!htb]
  \caption{The table summarizes the results and open problems when the
    dimension $n=2$.
    \label{open-table2}}
  \begin{tabular}{|c|c;{1pt/2pt}c|c;{1pt/2pt}c|c;{1pt/2pt}c|}
    \hline $n=2$ & $\tleq\to\sleq$ & $\tleq\to\lleq$ &
    $\sleq\to\tleq$ & $\sleq\to\lleq$& $\lleq\to\tleq$&
    $\lleq\to\sleq$ \\

    \hline $\exists^2$ / $\forall^2$ &
    \multicolumn{6}{c|}{\cellcolor{red!42} $\nexists$ (not possible)}\\

    \hline $\exists^3$ & \cellcolor{verda}$\newettos$ &
    \cellcolor{red!42} $\nexists$ &  \cellcolor{verda}$\newestot$  &
    \multicolumn{3}{c|}{\cellcolor{red!42}  $\nexists$ (not possible)}\\

    \hline $\forall^3$ & \cellcolor{red!42} $\nexists$ &
    \cellcolor{verda}$\newuttol$ & \cellcolor{red!42} $\nexists$ &
    \cellcolor{verda} $\newustol$ &
    \multicolumn{2}{c|}{\cellcolor{red!42} $\nexists$}\\
    
    \hline $\exists^4$ & \cellcolor{verda}$\ettos$ &
    \cellcolor{red!42} $\nexists$ & \cellcolor{verda}$\estot$ &
    \multicolumn{3}{c|}{\cellcolor{red!42}  $\nexists$ (not possible)}\\

    \hline $\forall^4$ & \cellcolor{red!42} $\nexists$ &
    \cellcolor{verda}$\uttol$ & \cellcolor{red!42} $\nexists$ &
    \cellcolor{verda}$\ustol$ &
    \multicolumn{2}{c|}{\cellcolor{red!42} $\nexists$}\\



    \hline $\exists^1\forall^1$ & \open &\cellcolor{verda}$\ttol$ &
    \open &\cellcolor{verda}$\stol$ &
    \multicolumn{2}{c|}{\cellcolor{red!42} $\nexists$} \\
    
    \hline $\forall^1\exists^1$ &\cellcolor{verda}$\ttos$ & \open &
    \cellcolor{verda}$\stot$ & \open &
    \multicolumn{2}{c|}{\cellcolor{red!42} $\nexists$}\\

    \hline $\exists^2\forall^1$ / $\exists^1\forall^2$ & \open &\done &
    \open &\done &
    \multicolumn{2}{c|}{\cellcolor{red!42} $\nexists$} \\
    
    \hline $\forall^2\exists^1$ / $\forall^1\exists^2$  &\done & \open &
    \done & \open &
    \multicolumn{2}{c|}{\cellcolor{red!42} $\nexists$}\\

    \hline $\exists^2\forall^2$
    &\cellcolor{verda}$\wstots$ &\done
    &\cellcolor{verda}$\wstot$ &\done &
    \multicolumn{2}{c|}{\cellcolor{red!42} $\nexists$}\\
    
    \hline $\forall^2\exists^2$ &\done
    &\cellcolor{verda}$\wstols$ &\done &
    \cellcolor{verda}$\wstol$ &\multicolumn{2}{c|}{\cellcolor{red!42}
      $\nexists$} \\
    
    \hline
  \end{tabular}
\end{table}
\begin{table}[!htb]
  \caption{The table summarizes the results and open problems
    if $n>2$ and the underlying field is Euclidean.
   \label{open-table3}}
  \begin{tabular}{|c|c;{1pt/2pt}c|c;{1pt/2pt}c|c;{1pt/2pt}c|}
    \hline $n>2\kaj$ \eqref{eq-Eucl} & $\tleq\to\sleq$ &
    $\tleq\to\lleq$ & $\sleq\to\tleq$ & $\sleq\to\lleq$&
    $\lleq\to\tleq$& $\lleq\to\sleq$ \\
    
    \hline $\exists^2$ or $\forall^2$ &
    \multicolumn{6}{c|}{\cellcolor{red!42} $\nexists$ (not
      possible)}\\

    \hline $\exists^3$ & \cellcolor{verda}$\newettos$ &
    \cellcolor{red!42} $\nexists$ & \open &
    \multicolumn{3}{c|}{\cellcolor{red!42}  $\nexists$ (not possible)}\\

    \hline $\forall^3$ & \cellcolor{red!42} $\nexists$ &
    \cellcolor{verda}$\newuttol$ & \cellcolor{red!42} $\nexists$ &
    \open &
    \multicolumn{2}{c|}{\cellcolor{red!42} $\nexists$}\\

    \hline $\exists^4$ & \cellcolor{verda}$\ettos$ &
    \cellcolor{red!42} $\nexists$ & \open &
    \multicolumn{3}{c|}{\cellcolor{red!42}  $\nexists$ (not possible)}\\

    \hline $\forall^4$ & \cellcolor{red!42} $\nexists$ &
    \cellcolor{verda}$\uttol$ & \cellcolor{red!42} $\nexists$ &
    \open &
    \multicolumn{2}{c|}{\cellcolor{red!42} $\nexists$}\\

    \hline $\exists^*$ & \done &
    \cellcolor{red!42} $\nexists$ & \open &
    \multicolumn{3}{c|}{\cellcolor{red!42}  $\nexists$ (not possible)}\\

    \hline $\forall^*$ & \cellcolor{red!42} $\nexists$ & \done &
    \cellcolor{red!42} $\nexists$ & \open &
    \multicolumn{2}{c|}{\cellcolor{red!42} $\nexists$}\\

    \hline $\exists^1\forall^1$ & \open & \cellcolor{verda}$\ttol$ &
    \open & \cellcolor{verda}$\stol$ & \open &
    \cellcolor{verda}$\ltos$ \\

    \hline $\forall^1\exists^1$ & \cellcolor{verda}$\ttos$ & \open &
    \cellcolor{verda}$\stot$ & \open & \cellcolor{verda}$\ltot$ &
    \open \\

    \hline $\exists^2\forall^1$ /  $\exists^1\forall^2$  & \open & \done &
    \open & \done & \open &
    \done \\

    \hline $\forall^2\exists^1$ / $\forall^1\exists^2$ & \done & \open
    & \done & \open & \done & \open \\

    \hline $\exists^2\forall^2$ &\open  &\done
    &\cellcolor{verda}$\wstot$ &\done & \open &\done \\
    
    \hline $\forall^2\exists^2$ &\done &\open &\done &
    \cellcolor{verda}$\wstol$ &\done & \open \\

    \hline $\exists^*\forall^*$ & \done &\done & \done &\done & \open
    &\done \\

    \hline $\forall^*\exists^*$ &\done & \done &\done & \done
    &\done& \open \\ \hline
  \end{tabular}
\end{table}
\clearpage

In Tables \ref{open-table2} and \ref{open-table3}, we summarize our
results and open problems related to the simplest possible quantifier
complexity. For example, we do not know whether $\wstot$ and $\wstots$
are the simplest existential-universal definitions or not, see
Table~\ref{open-table2}. In case the dimension $n\ge3$, we also do not
know if timelike relatedness can or cannot be defined from lightlike
relatedness by an existential-universal formula, or equivalently if
spacelike relatedness can be defined from lightlike relatedness by a
universal-existential formula, see Table~\ref{open-table3}.

Here we were using a quite direct approach both to find defining
formulas and to show their nonexistence. Another possible indirect
approach is trying to use \L o\'s--Tarski theorem (see, \eg
\cite[Thm.6.5.4]{Ho93}, \cf also \cite{SAC16}) and related
preservation theorems for solving the above problems.

\bibliographystyle{asl}
\bibliography{LogRel12019}

\end{document}